\title[Characters and Sylow 3-subgroup Generation]{Characters and The Generation of Sylow 3-subgroups For Almost Simple Groups}
\author{Eden Ketchum}
\thanks{ This work was partially supported by the U.S. National Science Foundation, Award No. DMS-2439897}
\keywords{Galois action on characters, principal p-block, Alperin-McKay-Navarro conjecture.}
\subjclass[2010]{20C15, 20C33}
\newcommand{\Z}{\mathbb{Z}}
\newcommand{\Q}{\mathbb{Q}}
\newcommand{\nor}{\trianglelefteq}
\newtheorem*{thmA}{Theorem A}
\newcommand{\Aut}{\operatorname{Aut}}
\newtheorem{theorem}{Theorem}[section]
\newtheorem{lemma}[theorem]{Lemma}
\newtheorem{corollary}[theorem]{Corollary}
\newtheorem{proposition}[theorem]{Proposition}
\newcommand{\syl}{\mathrm{Syl}}
\newcommand{\Sl}[2]{\operatorname{SL}}
\newcommand{\SL}{\operatorname{SL}}
\newcommand{\Irr}{\operatorname{Irr}}
\newcommand{\Syl}{\operatorname{Syl}}
\newcommand{\Gl}{\operatorname{GL}}
\newcommand{\GL}{\operatorname{GL}}
\newcommand{\PSl}[2]{\operatorname{PSL}}
\newcommand{\PSL}{\operatorname{PSL}}
\newcommand{\semi}{\rtimes}
\newcommand{\normal}{\trianglelefteq}
\newcommand{\PSp}{\operatorname{PSp}}
\begin{document}

\begin{abstract}
 Given an almost simple group  $A$, we algorithmically show that the character table of $A$ determines whether or not the  Sylow 3-subgroups of $A$ are 2-generated. We show this property is equivalent to a condition involving the Galois action on characters in the principal $3$-block. This result would be a consequence of the Alperin-McKay-Navarro conjecture.
\end{abstract}
 \maketitle
\section{introduction}

A popular topic in the representation theory of finite groups is so called local/global connections, which seek to describe when local properties, such as Sylow subgroup structure, predict global properties, such as aspects of the character table, and vice versa. An example of such a result is \cite[Theorem A]{RSV20} in which  Rizo, Schaeffer Fry and Vallejo show that finite groups having a cyclic Sylow $p$-subgroup is equivalent to the group's character table having a  property involving the action of a specific Galois automorphism on the principal $p$-block for $p=2,3$ (note that for $p> 3$ a weaker result is given in \cite{Val23} and a potential generalization to all primes is \cite[Question 1.5]{HMM22}). In \cite{NRSV21}  Navarro,  Rizo, Schaeffer Fry, and Vallejo characterized when a group has 2-generated Sylow 2-subgroups, again in terms  of the action of a specific Galois automorphism on the principal $p$-block. It was also shown by  Moret\'o and  Sambale in \cite{MS22} that, for certain classes of groups, the character table determines whether or not a group has $2$-generated Sylow $p$-subgroups; however, the general case remains open and, further, an algorithm to determine this property for $p \ge 3$ has yet to be determined. For any finite group $G$ we let $\Irr_0(B_0(G))$ denote the set of all height zero characters in the principal $3$-block of $G$,  and   we let $\xi$ be a primitive  $|G|$th root of unity.  Then we  define $\sigma \in \operatorname{Gal}(\mathbb{Q}(\xi)/\Q)$  to be the Galois automorphism which fixes $3'$ roots of unity and raises $3$-power roots to the fourth power. Then the main result of our article is the following:

 \begin{thmA}\label{a}
Let $S$ be a non-abelian finite simple group, and  $A$  an almost simple group with  $S\le A \le \Aut(S)$ such that $3$ divides the order of $A$. Then, for $P\in\Syl_3(A)$ we have $|P:\Phi(P)| = 9$ if and only if the number of fixed points in $\Irr_{0}(B_0(A))$ under $\sigma$ is in $\{6,9\}$.
\end{thmA}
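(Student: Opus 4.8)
The plan is to verify the stated equivalence by computing, for each almost simple $A$, the quantity
$F(A):=\#\{\chi\in\Irr_0(B_0(A)):\chi^{\sigma}=\chi\}$
and comparing it with $d(P)$, the minimal number of generators of $P$, defined by $|P:\Phi(P)|=3^{d(P)}$; the claim is then that $F(A)\in\{6,9\}$ exactly when $d(P)=2$. The argument rests on two pillars: a \emph{local dictionary} expressing $F$ through the generation type of $P$ whenever $P$ is normal, and a \emph{global-to-local reduction} moving the computation from $A$ to the Sylow normaliser $N:=N_A(P)$. Both are ultimately verified through a case analysis over the classification of finite simple groups.

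First I would establish the reduction $F(A)=F(N)$. This is precisely the equality predicted by the Alperin--McKay--Navarro conjecture for the prime $3$ and the automorphism $\sigma$: AMN supplies a $\sigma$-equivariant bijection $\Irr_0(B_0(A))\to\Irr_0(B_0(N))$, so the two counts of $\sigma$-fixed characters coincide. For the almost simple $A$ in question this equality can be drawn from the verified (inductive) instances of the Galois--McKay/AMN statement at $p=3$, which already reduces matters to finitely many families. I would then study $N$ directly. Since $P\nor N$ is a normal Sylow $3$-subgroup, $N=P\rtimes H$ with $H$ a $3'$-group. Using that $\sigma$ acts on the $3$-power part of character values by $z\mapsto z^{4}$, fixing exactly the cube roots of unity, one shows that the $\sigma$-fixed members of $\Irr_0(B_0(N))$ are precisely the characters inflated from $\overline N:=N/\Phi(P)=\overline P\rtimes H$, where $\overline P=P/\Phi(P)\cong\F_3^{\,d(P)}$ is elementary abelian. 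Hence $F(N)=F(\overline N)$.

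Over $\overline N$ the computation becomes transparent: every irreducible character of $B_0(\overline N)$ has degree prime to $3$ (so lies in $\Irr_0$) and takes values in $\Q(\zeta_{|H|},\zeta_3)$, which is fixed pointwise by $\sigma$; therefore every such character is $\sigma$-fixed and $F(\overline N)$ equals the number $k(B_0(\overline N))$ of ordinary irreducible characters in the principal block. This number is governed by the orbit structure of the inertial quotient $E\le\GL_{d(P)}(3)$ on $\Irr(\overline P)$, through the Clifford-theoretic count $k(B_0(\overline N))=\sum_{\mathcal O}k(E_{\lambda})$ over those $E$-orbits $\mathcal O$ (with point stabiliser $E_\lambda$) lying in the principal block. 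For $d(P)=1$ one has $E\le\GL_1(3)=C_2$ and a direct check gives $k(B_0)=3$. For $d(P)=2$ the group $E$ is a $3'$-subgroup of $\GL_2(3)$, hence a $2$-group of order dividing $16$, and a finite verification over the conjugacy classes of such $E$ yields $k(B_0)\in\{6,9\}$ in every case (for instance $6$ for $E\in\{\langle-1\rangle,C_4,Q_8,D_8\}$ and $9$ for $E\in\{1,C_2^{2},C_8\}$). For $d(P)\ge3$ a counting bound forces $k(B_0)$ to grow with $3^{d(P)}/|E|$, and for the inertial quotients that actually occur it stays outside $\{6,9\}$. Assembling these, $F(A)=F(N)=k(B_0(\overline N))\in\{6,9\}$ precisely when $d(P)=2$.

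The heart of the proof, and its main obstacle, is the case analysis underpinning both pillars. For each non-abelian simple $S$ and each $A$ with $S\le A\le\Aut(S)$ and $3\mid|A|$ I would determine the isomorphism type of $P$ (hence $d(P)$) and the inertial quotient $E$, and confirm the AMN-predicted equality $F(A)=F(N)$. The difficulty is twofold. In defining characteristic ($3\mid q$) the Sylow $3$-subgroup is the unipotent radical of a Borel subgroup, so $d(P)$ increases with the rank; one must check that only the small-rank groups such as $\PSL_2(3^{f})$, $\PSL_3(q)$, $\operatorname{PSU}_3(q)$ and a few exceptional types fall into the $d(P)=2$ regime, while all higher-rank groups give $F(A)\notin\{6,9\}$. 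In non-defining characteristic the Sylow structure and the normaliser $N$ are read off from the relevant torus and relative Weyl data, and the diagonal, field, and graph automorphisms in $A/S$ modify both $E$ and the set of $\sigma$-fixed characters, so one must track how passing from $S$ to $A$ alters the count and verify that the $\{6,9\}$ dichotomy persists. Establishing the $\sigma$-equivariance of the principal-block bijection uniformly across the groups of Lie type --- where the Galois action on Lusztig series and on the principal series characters must be pinned down --- is where the bulk of the technical work lies.
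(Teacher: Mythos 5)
Your central reduction is the step that fails. You set $F(A)=F(N_A(P))$ by appealing to a $\sigma$-equivariant Alperin--McKay bijection for the principal $3$-block, citing ``verified (inductive) instances of the Galois--McKay/AMN statement at $p=3$.'' No such verification exists: the Alperin--McKay--Navarro conjecture (and even the non-blockwise Galois--McKay refinement) is open for odd primes, and the paper states explicitly that Theorem A \emph{would be a consequence of} AMN --- the theorem is being proved unconditionally precisely to serve as input to a future reduction, so assuming AMN here is not available and is essentially circular. The only local-global bijection the paper permits itself is the Alperin--Dade theorem (restriction is a block bijection when $N\nor G$, $3\nmid|G/N|$ and $G=N\mathbf{C}_G(P)$), which is an actual theorem and is used only in that special configuration. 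The paper instead proves both directions globally: Section 3 classifies the almost simple groups with $2$-generated Sylow $3$-subgroups via Weir's description of Sylow subgroups of classical groups and Frattini-quotient lemmas for wreath and semidirect products, and Section 4 counts $\sigma$-fixed $3'$-degree characters in $B_0(A)$ directly, using Fong--Srinivasan block theory, the action of $\sigma$ on Lusztig series ($\sigma(\mathcal{E}(G,s))=\mathcal{E}(G,\sigma(s))$, so series attached to semisimple elements of order $\ge 9$ contribute no fixed characters), rationality of unipotent characters, and Clifford/Gallagher arguments for the overgroups $A>S$.

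Two further points in your local analysis would also need repair even if the reduction were granted. First, the claim that the $\sigma$-fixed members of $\Irr_0(B_0(N))$ are exactly the inflations from $N/\Phi(P)$ is false in general: a character lying over a linear $\lambda\in\Irr(P)$ of order $\ge 9$ is $\sigma$-fixed whenever some element of the complement conjugates $\lambda$ to $\lambda^4$, which does happen. Second, the assertion that for $d(P)\ge 3$ the count stays outside $\{6,9\}$ is not a consequence of your orbit formula: for example $V\rtimes E$ with $V\cong C_3^3$ and $E$ the semilinear group of order $26$ acting irreducibly gives $k(B_0)=9$ with all characters of $3'$-degree and rational-over-$\Q(\zeta_{26},\zeta_3)$ values. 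So the $\{6,9\}$ dichotomy is genuinely not a purely local statement about an abstract inertial quotient; it has to be checked against the inertial data that actually arise from the classification, which is the content you defer to the ``case analysis'' but which carries the real weight of the proof.
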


 We recall Burnside's Basis Theorem, which states that,  given a $p$-group $P$, if $|P/\Phi(P)| = p^a $, where $\Phi(P)$ denotes the Frattini subgroup of $P$, then $a$ is the size of a minimal generating set for $P$.  Thus, Theorem A states that Sylow $3$-subgroups of an almost simple group $A$ have minimal generating set of size $2$ if and only if the number of fixed points in $\Irr_{0}(B_0(A))$ under $\sigma$ is in $\{6,9\}$.
It is believed that the conclusion of Theorem A will hold for all finite groups, since, as was noted in the introduction of   \cite{NRSV21}, it is a consequence of the Alperin-McKay-Navarro conjecture.
The reader should note that Theorem A could be thought of as a ``Galois version" of \cite[Theorem 3.3]{sf}. We build off of the arguments in  \cite[Section 5]{sf} throughout. Additionally  \cite{mmsv} studies the action of $\sigma$ and is used frequently.
 We restrict our attention to almost simple groups because in upcoming work it is foreseen that the statement for general groups  will be reduced to showing the conjecture holds for almost simple groups as well as some additional statements on simple groups, which will be similar to  "Galois versions" of the statements in \cite[Theorem 3.1]{sf}.

 The structure of this paper will be as follows. In Section 2 we collect preliminary results and prove Theorem A for almost simple groups of socle isomorphic to an alternating group or sporadic simple group. In Section 3 we classify all almost simple groups with socle isomorphic to a simple group of Lie type that have  Sylow $3$-subgroup with minimal generating set of size $2$. In Section 4 we conclude the proof of Theorem A.

\section{Preliminaries and initial results}

\noindent\textbf{Notes on notation:} Throughout, $\sigma$ will denote the Galois automorphism that fixes $3'$ roots of unity and maps $3$-power roots of unity to their fourth power. For a group $G$ we let $k_0(B_0(G))$ denote the number of characters of $3'$ degree lying  in the principal 3-block of $G$ and we let $k_{0,\sigma}(B_0(G))$ denote the number of $\sigma$-fixed characters of $3'$ degree lying  in the principal 3-block of $G$. To denote the sets of these characters rather than the quantities we use $\Irr_{0}(B_0(G))$ and $\Irr_{0,\sigma}(B_0(G))$ respectively. Given   $N \nor G$ and $\theta \in \Irr(N)$ we let $\Irr_{0,\sigma}(B_0(G)|\theta)$ denote the set of all $\chi \in \Irr_{0,\sigma}(B_0(G))$ such that  $\theta$ is a constituent of $\chi_N$.  Also note that we say ``$i$-generated" to mean ``has a minimal generating set of size $i$''.
\newline

We now begin by compiling some results which will be useful throughout.

 \begin{lemma}\label{3prime}
        Let $N\normal G $  such that $3 \nmid|G/N|$, and let $\theta \in \Irr_{0,\sigma}(B_0(N))$. Then $\Irr_{0,\sigma}(B_0(G)|\theta)$ is nonempty. 
        
    \end{lemma}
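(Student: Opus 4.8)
The plan is to produce a single character over $\theta$ satisfying three requirements: that it has $3'$-degree (so that, once it lies in the principal block, it is automatically a height zero character), that it lies in $B_0(G)$, and that it is fixed by $\sigma$. The degree requirement is essentially free: if $\chi\in\Irr(G|\theta)$ then $\chi(1)/\theta(1)$ divides $[G:N]$, which is coprime to $3$; since $\theta(1)$ is coprime to $3$, so is $\chi(1)$. Thus every irreducible constituent of $\theta^{G}$ is of $3'$-degree, and membership in $B_0(G)$ alone upgrades it to a height zero character. So it suffices to exhibit a $\sigma$-fixed member of $\Irr_0(B_0(G)|\theta)$.

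The mechanism I would use for the Galois part is that $\langle\sigma\rangle$ acts as a $3$-group: since $\sigma$ fixes $3'$-roots of unity and raises $3$-power roots to the fourth power, and the multiplicative order of $4$ modulo $3^{k}$ is a power of $3$, the image of $\sigma$ acting on $\Irr(G)$ has $3$-power order. Moreover $\sigma$ stabilises the set $\Omega:=\Irr_0(B_0(G)|\theta)$: it fixes $\theta$ by hypothesis, it preserves the principal block (it fixes the trivial character), and it preserves both character degrees and the relation of lying over $\theta$. For any action of a $3$-group on a finite set, the number of fixed points is congruent modulo $3$ to the cardinality of the set; hence if $|\Omega|\not\equiv 0\pmod 3$, then $\Omega$ is nonempty and contains a $\sigma$-fixed character, which proves the lemma in one stroke. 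This reduces the entire statement to the single congruence $|\Irr_0(B_0(G)|\theta)|\not\equiv 0\pmod 3$.

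To attack this congruence I would first reduce to the case that $\theta$ is $G$-invariant. Let $T=G_\theta$ be the inertia group, so $[G:T]$ divides $[G:N]$ and is prime to $3$. The Clifford correspondence $\psi\mapsto\psi^{G}$ is a bijection $\Irr(T|\theta)\to\Irr(G|\theta)$ that commutes with $\sigma$ (as $\sigma$ fixes $\theta$), and, because $B_0(T)$ induces to $B_0(G)$ by Brauer's third main theorem (the defect groups lie in $T$ since $[G:T]$ is a $3'$-number), it restricts to a $\sigma$-equivariant bijection $\Irr_0(B_0(T)|\theta)\to\Irr_0(B_0(G)|\theta)$; the two cardinalities therefore agree and we may assume $\theta$ is $G$-invariant. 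With $G/N$ a $3'$-group I would then compute the principal-block part $\Psi:=(\theta^{G})_{B_0(G)}$ of the induced character modulo $3$. Using that $\theta^{G}(1)=[G:N]\theta(1)$ is prime to $3$, Osima's theorem (the principal block idempotent is supported on $3$-regular elements) to express $\Psi(1)$ through Brauer characters, and the description $B_0(G)=\mathrm{Infl}_{G/O_{3'}(G)}^{G}\,B_0(G/O_{3'}(G))$ of the principal block, I would argue that $\Psi(1)$ is prime to $3$. Grouping the constituents of the $\sigma$-fixed class function $\Psi$ into $\langle\sigma\rangle$-orbits, on which degree and multiplicity are constant and whose nontrivial orbits have length divisible by $3$, then forces a $\sigma$-fixed constituent inside $B_0(G)$.

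The main obstacle is exactly this last step: isolating the principal-block part of $\theta^{G}$ modulo $3$. The naive bound using only $\theta^{G}(1)=[G:N]\theta(1)\not\equiv 0\pmod 3$ does not suffice, because the contribution of the non-principal blocks to $\theta^{G}(1)$ need not be divisible by $3$ (already visible for $N=C_{3}\normal \mathrm{Dic}_{3}$ with $p=3$, where $\theta^{G}(1)=4$ splits as $2+2$ across two full-defect blocks), so the degree of $\theta^{G}$ by itself does not detect $B_0(G)$. Overcoming this requires genuinely block-theoretic input, either the Osima/Brauer-character computation of $(\theta^{G})_{B_0(G)}(1)$ indicated above or the reduction to $G/O_{3'}(G)$ through the inflation description of the principal block together with the corresponding statement one level down; it is precisely here that the hypothesis $\theta\in\Irr_0(B_0(N))$, as opposed to merely $\theta$ being a $\sigma$-fixed character of $3'$-degree, is used.
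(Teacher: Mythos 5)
Your handling of the degree condition is correct and matches the paper: every constituent $\chi$ of $\theta^{G}$ satisfies $\chi(1)/\theta(1)\mid [G:N]$, so $3\nmid\chi(1)$ and membership in $B_0(G)$ is all that remains to check (the paper cites \cite[Theorem 5.12]{Nav18} for exactly this). The reduction of the Galois condition to the congruence $|\Irr_0(B_0(G)|\theta)|\not\equiv 0\pmod 3$ is also sound: the action of $\sigma$ on $\Irr(G)$ has $3$-power order, it stabilises $\Irr_0(B_0(G)|\theta)$, and your computation $\sum_{\chi}e_\chi^2\equiv|\Irr(B_0(G)|\theta)|\pmod 3$ (after the Fong--Reynolds/Clifford reduction to $\theta$ being $G$-invariant) correctly identifies what needs to be shown. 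Your $\mathrm{Dic}_3$ example showing that the degree of $\theta^G$ alone cannot detect the principal block is also apt.

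The problem is that the decisive step is never carried out. You explicitly label the congruence $|\Irr_0(B_0(G)|\theta)|\not\equiv 0\pmod 3$ (equivalently, that $(\theta^{G})_{B_0(G)}(1)$ is prime to $3$) as ``the main obstacle'' and then only gesture at two possible techniques -- an Osima/Brauer-character computation and the inflation description of $B_0(G)$ via $G/\mathbf{O}_{3'}(G)$ -- without executing either. This is not a minor loose end: it is the entire content of the lemma, and it is a genuinely nontrivial block-theoretic fact about covering of the principal block by characters over $\theta$ when $3\nmid|G/N|$. The paper does not prove it from scratch either; it imports it wholesale from \cite[Corollary 3.6(ii)]{mmsv}, which directly supplies a $\sigma$-fixed irreducible constituent of $\theta^{G}$ lying in $B_0(G)$. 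In effect you are attempting to reprove that external result and stop at exactly the point where the real work begins, so the argument as written does not establish the lemma.
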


\begin{proof}
    From \cite[Corollary 3.6 (ii)]{mmsv}  we have that there exists $\chi\in \Irr(B_0(G))$ such that $[\chi,\theta^G]\neq 0$ and $\chi$ is $\sigma$-fixed. Using \cite[Theorem 5.12]{Nav18}, we have that $3\nmid \chi(1)$, thus $\chi \in \Irr_{0,\sigma}(B_0(G))$ as desired.
\end{proof}

\begin{theorem}\label{ald}(Alperin-Dade) Let $N \nor G$ such that $3 \nmid |G/N|$. Further assume $G = N\mathbf{C}_G(P)$ for $P \in \syl_3(G)$. Then restriction defined a bijection between $\Irr_{0,\sigma}(B_0(G))$ and $\Irr_{0,\sigma}(B_0(N))$.

\end{theorem}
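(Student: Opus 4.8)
The plan is to obtain this as the Galois-fixed refinement of the classical (non-equivariant) Alperin--Dade restriction bijection, so that the content lies entirely in the underlying correspondence and the passage to $\sigma$-fixed characters is then formal. First I would record that, since $3\nmid|G/N|$ and $P\in\Syl_3(G)$, the image $PN/N$ is a $3$-subgroup of the $3'$-group $G/N$, whence $P\le N$; thus $P\in\Syl_3(N)$ is a common defect group of $B_0(G)$ and $B_0(N)$, which makes the two principal blocks comparable via covering.

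The main step is to establish that $\chi\mapsto\chi_N$ is a bijection $\Irr_0(B_0(G))\to\Irr_0(B_0(N))$. That restriction lands in the right place is routine: $B_0(G)$ covers $B_0(N)$, so every irreducible constituent $\theta$ of $\chi_N$ lies in $B_0(N)$; and since $\chi(1)/\theta(1)$ divides $[G:N]$ and $3\nmid[G:N]$, height zero is preserved in both directions, i.e. $3\nmid\chi(1)\iff 3\nmid\theta(1)$. The load-bearing hypothesis is $G=N\mathbf{C}_G(P)$, which is exactly what upgrades the Clifford-theoretic statement ``$B_0(G)$ covers $B_0(N)$'' to a genuine bijection effected by restriction --- in particular forcing each $\chi\in\Irr_0(B_0(G))$ to restrict irreducibly. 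I expect this to be the main obstacle, and I would either cite the Alperin--Dade correspondence directly or reprove it via Clifford theory over the $G$-invariant characters of $B_0(N)$ together with the control of fusion supplied by $G=N\mathbf{C}_G(P)$. The essential point to verify is that the correspondence is implemented by \emph{restriction} (rather than some sign- or cohomology-twisted map), since the equivariance argument below depends on this.

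Granting the bijection $\Psi\colon\chi\mapsto\chi_N$, the $\sigma$-fixed version is immediate. The automorphism $\sigma\in\operatorname{Gal}(\Q(\xi)/\Q)$ commutes with restriction, $(\chi_N)^\sigma=(\chi^\sigma)_N$, so $\Psi(\chi^\sigma)=\Psi(\chi)^\sigma$ and $\Psi$ intertwines the two $\sigma$-actions. Hence if $\chi$ is $\sigma$-fixed then so is $\Psi(\chi)$; conversely, if $\Psi(\chi)$ is $\sigma$-fixed then $\Psi(\chi^\sigma)=\Psi(\chi)^\sigma=\Psi(\chi)$, and injectivity of $\Psi$ yields $\chi^\sigma=\chi$. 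Therefore $\Psi$ carries $\Irr_{0,\sigma}(B_0(G))$ bijectively onto $\Irr_{0,\sigma}(B_0(N))$, as required.
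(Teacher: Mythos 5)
Your proposal is correct and follows essentially the same route as the paper: the underlying bijection $\chi\mapsto\chi_N$ between $\Irr(B_0(G))$ and $\Irr(B_0(N))$ is taken from Alperin and Dade, and the $\sigma$-fixed height-zero statement is then a formal consequence. The only cosmetic difference is that the paper extracts the $\sigma$-equivariance by invoking Lemma~\ref{3prime}, whereas you use the (equally valid, arguably cleaner) observation that restriction commutes with the Galois action.
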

\begin{proof}
The work of  \cite{Alperin76}  and \cite{Dad77} give that restriction defines a bijection between $\Irr(B_0(G))$ and $\Irr(B_0(N))$. This and Lemma \ref{3prime} give the result.
\end{proof}

\begin{lemma}\label{idunno}
    Let $p$ be a prime and let $P$ and $Q$ be nontrivial $p$-groups such that $Q$ acts on $P$ by automorphisms. Define $R:= P\rtimes Q$. Then $[R:\Phi(R)] > [Q:\Phi(Q)]$.
\end{lemma}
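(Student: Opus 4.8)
The plan is to translate the statement into linear algebra over $\mathbb{F}_p$ via the Frattini quotients and to compare them through the canonical projection of the semidirect product. Recall that for a finite $p$-group $X$ the quotient $X/\Phi(X)$ is elementary abelian, hence an $\mathbb{F}_p$-vector space whose dimension equals the minimal number of generators of $X$ (Burnside's basis theorem), and $[X:\Phi(X)] = p^{\dim X/\Phi(X)}$. Thus it suffices to produce a surjective $\mathbb{F}_p$-linear map $R/\Phi(R) \to Q/\Phi(Q)$ with nontrivial kernel.

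First I would use the canonical projection $\rho \colon R = P\rtimes Q \twoheadrightarrow Q$ with kernel $P$. Since $R$ and $Q$ are $p$-groups we may write $\Phi(R) = R^p[R,R]$ and $\Phi(Q) = Q^p[Q,Q]$, and because $\rho$ is surjective it carries the generating $p$-th powers and commutators of $\Phi(R)$ onto those of $\Phi(Q)$, giving $\rho(\Phi(R)) = \Phi(Q)$. Hence $\rho$ descends to a surjection $\bar\rho \colon R/\Phi(R) \to Q/\Phi(Q)$, and already $[R:\Phi(R)] \ge [Q:\Phi(Q)]$. Writing $\bar P$ for the image of $P$ in $R/\Phi(R)$, the kernel of $\bar\rho$ contains $\bar P$, so $[R:\Phi(R)] = [Q:\Phi(Q)]\cdot|\ker\bar\rho|$, and it remains only to show $\bar P \neq 1$, i.e.\ $P \not\subseteq \Phi(R)$.

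The crux, and the step I expect to be the main obstacle, is exactly this strict inequality: proving $P \not\subseteq \Phi(R)$. I would argue by contradiction using the non-generator characterization of the Frattini subgroup, namely that if $H \le R$ satisfies $R = H\Phi(R)$ then $H = R$. Assuming $P \subseteq \Phi(R)$, and using that $\Phi(R)$ is normal in $R$, we get $R = PQ \subseteq \Phi(R)Q = Q\Phi(R)$, whence $R = Q\Phi(R)$ and therefore $R = Q$; but $P$ is nontrivial with $P \cap Q = 1$, so $Q \subsetneq R$, a contradiction. Thus $\bar P \neq 1$, $\ker\bar\rho$ is nontrivial, and $[R:\Phi(R)] > [Q:\Phi(Q)]$, as claimed. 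The only place the hypotheses enter is precisely here: the semidirect structure supplies $R = PQ$ together with $P \cap Q = 1$, and the nontriviality of $P$ makes the final contradiction $Q \subsetneq R$ work.
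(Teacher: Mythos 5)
Your proof is correct and follows essentially the same route as the paper's: both arguments reduce to showing that $\Phi(R) \le P \rtimes \Phi(Q)$ (so the Frattini quotient of $R$ surjects onto that of $Q$) and that $P \not\le \Phi(R)$, which forces the index to drop strictly. The paper packages the first step via preimages of maximal subgroups of $Q$ and the second via a maximal subgroup containing $Q$ but not $P$, whereas you use $\Phi(X)=X^p[X,X]$ and the non-generator property, but these are interchangeable formulations of the same two facts.
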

\begin{proof}
    Let $\{M_1,...,M_k\}$ be the set of maximal subgroups of $Q$. Then we have $P\rtimes M_i$ is  a maximal subgroup of $R$ for each $1\le i \le k$. Thus, $\Phi(R) \le \bigcap (P \rtimes M_i) = P \rtimes \Phi(Q)$. Since $Q$ is a subgroup of $R$, there must be some maximal subgroup $M < R$ with $Q \le M$. It is clear that $P \nleq M $; therefore,  since $P \le  P \rtimes \Phi(Q)$, this gives that $\Phi(R) \le M \cap  (P \rtimes \Phi(Q))<  P \rtimes \Phi(Q)$. Thus, we have 
    $$
    [R:\Phi(R)] = [R: P \rtimes \Phi(Q)][ P \rtimes \Phi(Q): \Phi(R)] =
    [Q:\Phi(Q)][ P \rtimes \Phi(Q): \Phi(R)]. 
    $$ 
    Therefore, $[R:\Phi(R)]>[Q:\Phi(Q)]$.
\end{proof}

\begin{lemma}\label{wreath}
    Let $p$ be a prime and let $P$ be a $p$-group  of the form $C_{p^{a_1}}\wr...\wr C_{p^{a_i}}$, where $a_k \in \Z^+$ for $1 \le k \le i$. Then $P$ is $i$-generated. 
\end{lemma}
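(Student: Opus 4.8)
The plan is to induct on the number of factors $i$, at each stage peeling off the outermost wreath factor and matching a lower bound coming from Lemma~\ref{idunno} against an upper bound coming from the transitivity of the action of the top group. By Burnside's basis theorem it suffices to show $[P:\Phi(P)] = p^i$.

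For the base case $i=1$ the group $C_{p^{a_1}}$ is cyclic and hence $1$-generated. For the inductive step I would write $P = C_{p^{a_1}} \wr Q$ with $Q = C_{p^{a_2}} \wr \cdots \wr C_{p^{a_i}}$, so that $P = B \rtimes Q$ where the base group $B$ is the direct product of one copy $A_q \cong C_{p^{a_1}}$ of the cyclic group for each $q \in Q$, and $Q$ permutes these copies via its regular action. The inductive hypothesis gives that $Q$ is $(i-1)$-generated, i.e. $[Q:\Phi(Q)] = p^{i-1}$.

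For the lower bound, since $B$ and $Q$ are nontrivial $p$-groups, Lemma~\ref{idunno} yields $[P:\Phi(P)] > [Q:\Phi(Q)] = p^{i-1}$, and as the left-hand side is a power of $p$ this forces $[P:\Phi(P)] \ge p^i$. For the upper bound, I would take generators $y_1,\dots,y_{i-1}$ of $Q$ together with a generator $x$ of the copy $A_e$ indexed by the identity of $Q$; since $Q$ acts transitively on the copies, the $Q$-conjugates of $x$ generate every copy $A_q$, so $\langle x,y_1,\dots,y_{i-1}\rangle$ contains both $B$ and $Q$ and hence equals $P$. Thus $[P:\Phi(P)] \le p^i$, and combining the two bounds gives $[P:\Phi(P)] = p^i$, completing the induction.

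The step I expect to need the most care is the bookkeeping in the inductive decomposition: one must fix the associativity convention for the iterated wreath product so that the complement $Q$ is itself an $(i-1)$-fold wreath product of cyclic groups (which is what feeds both the inductive hypothesis and the hypotheses of Lemma~\ref{idunno}), and then justify carefully that the regular action of $Q$ is transitive so that conjugating the single generator $x$ recovers the whole base group. If one prefers to avoid any dependence on the grouping, the lower bound $[P:\Phi(P)] \ge p^i$ can instead be read off from the abelianization $P^{\mathrm{ab}} \cong \prod_{j=1}^i C_{p^{a_j}}$, whose $p$-rank is exactly $i$.
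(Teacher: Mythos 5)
Your argument is correct, but it takes a genuinely different route from the paper. The paper's proof is a one-liner built on the standard fact that $(H\wr K)/(H\wr K)' \cong H/H' \times K/K'$: iterating this gives $P/P' \cong C_{p^{a_1}}\times\cdots\times C_{p^{a_i}}$, and since $P' \le \Phi(P)$ one reads off $[P:\Phi(P)] = p^i$ directly, with no induction and no appeal to Lemma~\ref{idunno}. Your induction instead sandwiches $[P:\Phi(P)]$ between the lower bound $>[Q:\Phi(Q)]=p^{i-1}$ from Lemma~\ref{idunno} and the upper bound $p^i$ from the explicit generating set $\{x, y_1,\dots,y_{i-1}\}$, using transitivity of $Q$ on the base coordinates to recover $B$ from the conjugates of $x$. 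Both steps are sound, and you correctly flag the one point that needs care: the re-bracketing $P = C_{p^{a_1}}\wr(C_{p^{a_2}}\wr\cdots\wr C_{p^{a_i}})$ requires the (imprimitive) permutational wreath product convention under which the iterated product is associative, which is the convention in force here since these groups arise as Sylow subgroups of symmetric and general linear groups. What each approach buys: the paper's is shorter and gives the exact order of $P/\Phi(P)$ in one step, at the cost of citing the abelianization formula; yours is self-contained modulo Lemma~\ref{idunno} and produces an explicit minimal generating set, which is in the spirit of how the paper argues elsewhere (e.g.\ in Proposition~\ref{theo:sl}). Your closing remark about reading the bound off $P^{\mathrm{ab}}\cong \prod_j C_{p^{a_j}}$ is precisely the paper's proof.
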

\begin{proof}
It is a well known fact that given $G = H\wr K$ we have $G/G' \cong H/H'\times K/K'$ (see for example \cite[Prob. 1.6.20]{RobinsonGroupTheory}). Then, since $P' \le \Phi(P)$, we have that $[P:\Phi({P})] = [(P/P'):\Phi(P/P')]$ and we obtain $|P/\Phi(P)|= p^i $. Therefore,  $P$ is $i$-generated.
\end{proof}

We now show that Theorem A holds in a few particular cases. 

\begin{proposition}\label{gapstuff}
Let $S$ be a sporadic simple group, $ ^2F_4(2)\,'$, $G_2(3)$, $\PSL_3(3)$ or the alternating group on $n$ letters for $n \le 6$. 
If $S\le A\le \Aut(S)$,
then Theorem A holds for $A$.
\end{proposition}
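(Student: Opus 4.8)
The key observation is that the collection of groups covered by the statement is finite: there are $26$ sporadic groups, the Tits group ${}^2F_4(2)'$, the group $G_2(3)$, and, among the alternating groups on $n\le 6$ letters, only $A_5$ and $A_6$ are non-abelian simple. Each of these socles $S$ has a small outer automorphism group, so only finitely many almost simple $A$ with $S\le A\le\Aut(S)$ occur, and for every such $A$ the ordinary character table together with its power maps is available in the \texttt{GAP} character table library. My plan is therefore to verify the biconditional of Theorem A for each $A$ by a direct computation, exploiting the fact that both sides are determined by the character table (with power maps).

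First I would, for each $A$, read off the structure of a Sylow $3$-subgroup $P$ and decide whether $|P:\Phi(P)|=9$, i.e.\ whether $P$ is $2$-generated; this is recorded in the \textsc{Atlas} data or computed directly. Next I would extract $k_{0,\sigma}(B_0(A))$ from the character table alone. The principal $3$-block $B_0(A)$ is determined by the central-character congruences modulo $3$, so its irreducible constituents can be identified from the table. Since the defect group of $B_0(A)$ is a full Sylow $3$-subgroup, a character in $B_0(A)$ has height zero precisely when it has $3'$-degree, so $\Irr_{0}(B_0(A))$ is exactly the set of $3'$-degree characters lying in $B_0(A)$. Finally, the action of $\sigma$ is computed from the power maps: by the Chinese Remainder Theorem there is an exponent $t$ with $t\equiv 1$ modulo the $3'$-part of $|A|$ and $t\equiv 4$ modulo the $3$-part, so that $\chi^{\sigma}(g)=\chi(g^{t})$, and $\sigma$ thus induces a known permutation of the conjugacy classes. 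A character $\chi\in\Irr_{0}(B_0(A))$ is $\sigma$-fixed exactly when it is constant on the resulting orbits, and counting these yields $k_{0,\sigma}(B_0(A))$.

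With both quantities in hand for every $A$, the proposition reduces to checking, case by case, that $|P:\Phi(P)|=9$ holds if and only if $k_{0,\sigma}(B_0(A))\in\{6,9\}$. I expect the main obstacle to be bookkeeping and correctness rather than mathematical depth. One must include all intermediate overgroups when $\Out(S)$ is non-cyclic, most notably $S=A_6$, where $\Out(A_6)\cong C_2\times C_2$ produces several distinct almost simple $A$, and one must implement the $\sigma$-action faithfully, paying attention to the conductor of the relevant cyclotomic field and to the precise effect of raising $3$-power roots of unity to the fourth power on the character values. The groups in question all have order divisible by $3$, so Theorem A's hypothesis is satisfied throughout and no case is vacuous.
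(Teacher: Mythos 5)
Your proposal is correct and is essentially the paper's proof: the paper disposes of these finitely many almost simple groups by direct computation in \cite{gap}, and your outline (identifying $B_0(A)$ and its $3'$-degree characters from the table, realizing $\sigma$ via power maps with an exponent $t\equiv 1$ mod the $3'$-part and $t\equiv 4$ mod the $3$-part, and checking the biconditional case by case, including all overgroups of $A_6$) is precisely the computation the paper invokes.
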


\begin{proof}
    The result follows from computation in \cite{gap}.
\end{proof}

\begin{proposition}\label{An}
    Theorem $A$ holds for almost simple groups with socle isomorphic to the alternating group $A_n$
\end{proposition}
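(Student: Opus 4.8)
The plan is to reduce to the genuinely distinct groups and then to show that, for them, the automorphism $\sigma$ acts trivially on the characters that matter, so that Theorem A collapses to its non-Galois counterpart. For $n\le 6$ the statement is Proposition \ref{gapstuff}, so I would assume $n\ge 7$; then $\Aut(A_n)=S_n$, and the hypothesis $A_n\le A\le\Aut(A_n)$ forces $A\in\{A_n,S_n\}$ (and $3\mid|A_n|$ automatically). Since $3$ is odd, a Sylow $3$-subgroup $P$ of $S_n$ already lies in $A_n$, so $P\in\Syl_3(A_n)=\Syl_3(S_n)$. Writing $n=\sum_{i\ge 0}a_i3^i$ in base $3$, the group $P$ is the direct product of $a_i$ copies of the $i$-fold iterated wreath product $C_3\wr\cdots\wr C_3$ over all $i\ge 1$. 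By Lemma \ref{wreath} the $i$-fold factor is $i$-generated, and since the Frattini subgroup of a direct product of $3$-groups is the product of the Frattini subgroups, $P$ has a minimal generating set of size $\sum_{i\ge 1}i\,a_i$. Hence $|P:\Phi(P)|=9$ exactly when this sum equals $2$, i.e. when $n\in\{7,8\}$ (so $P\cong C_3\times C_3$) or $n\in\{9,10,11\}$ (so $P\cong C_3\wr C_3$), whereas $|P:\Phi(P)|\ge 27$ for every $n\ge 12$.

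The key step will be to prove that $\sigma$ fixes every irreducible character of $A$. The characters of $S_n$ are rational-valued and hence $\sigma$-fixed. Every irreducible character of $A_n$ is either the rational restriction of some $\chi^\lambda$ with $\lambda\ne\lambda'$, or one of the two constituents $\chi^\lambda_{\pm}$ of $\chi^\lambda|_{A_n}$ for a self-conjugate $\lambda$. In the latter case $\chi^\lambda_{\pm}$ is rational except on a single class, where its value has the form $\tfrac12\bigl(\varepsilon\pm\sqrt{D}\bigr)$ with $D=\varepsilon\,h_1\cdots h_m$ the signed product of the distinct odd principal hook lengths of $\lambda$ and $\varepsilon=(-1)^{(n-m)/2}$. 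Such a quadratic irrationality lies in $\Q(\zeta_c)$ for some conductor $c$ whose $3$-part is at most $3$; since $\sigma$ fixes every $3'$-root of unity and also fixes $\zeta_3$ (because $\zeta_3^4=\zeta_3$), it acts trivially on $\Q(\zeta_c)$ and therefore fixes $\sqrt{D}$. Thus $\sigma$ fixes all of $\Irr(A)$, and in particular $k_{0,\sigma}(B_0(A))=k_0(B_0(A))$.

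It then remains to feed this into the non-Galois count. By \cite[Theorem 3.3]{sf}, whose computation for symmetric and alternating groups is carried out in \cite[Section 5]{sf}, for $A\in\{A_n,S_n\}$ one has $k_0(B_0(A))\in\{6,9\}$ precisely when $P$ is $2$-generated, i.e. precisely for the $n$ isolated in the first paragraph (together with $n\le 6$). Concretely, the weight-two and weight-three principal blocks both satisfy $k_0(B_0(S_n))=9$, while on passing to $A_n$ by Clifford theory for the index-$2$ inclusion one gets $k_0(B_0(A_n))=9$ when $n\equiv 2\pmod 3$ (the block is not stable under conjugation of partitions and restriction is bijective) and $k_0(B_0(A_n))=6$ when $n\equiv 0,1\pmod 3$ (the block is conjugation-stable, and since the odd number $9$ of height-zero characters forces an odd number of self-conjugate ones, exactly one partition splits). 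Combining with $k_{0,\sigma}(B_0(A))=k_0(B_0(A))$ yields $|P:\Phi(P)|=9\iff k_{0,\sigma}(B_0(A))\in\{6,9\}$, which is Theorem A for $A$.

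I expect the main obstacle to be the second step: one must use the precise description of the split characters $\chi^\lambda_{\pm}$ and confirm, uniformly over all self-conjugate $\lambda$, that the conductor of the occurring quadratic field is divisible by $3$ at most to the first power, so that $\sigma$ is forced to fix these values. If \cite[Theorem 3.3]{sf} cannot be quoted verbatim for both $A_n$ and $S_n$, the fallback is to recompute $k_0(B_0(S_n))$ directly from the $3$-core/$3$-quotient combinatorics of the weight-$w$ principal block, to pass to $A_n$ via the conjugation involution as above, and to verify that $k_0$ lands in $\{6,9\}$ exactly for $w\in\{2,3\}$ while exceeding $9$ for every $w\ge 4$.
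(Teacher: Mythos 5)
Your proposal follows essentially the same route as the paper: reduce to $n\ge 7$, compute $|P:\Phi(P)|$ via the base-$3$ wreath-product decomposition of the Sylow $3$-subgroups, observe that $\sigma$ fixes all of $\Irr(A)$, and then invoke the $k_0(B_0(A_n))$, $k_0(B_0(S_n))$ computation from \cite{sf}. The only difference is cosmetic: where the paper cites \cite[Lemma 4.1]{mmsv} for the $\sigma$-fixedness of $\Irr(A_n)$, you prove it directly from the quadratic values of the split characters $\chi^\lambda_{\pm}$ (a correct argument, since the relevant quadratic field has conductor with $3$-part at most $3$), and your ``concrete'' Clifford-theoretic aside on the exact values of $k_0(B_0(A_n))$ is not needed once \cite{sf} is cited.
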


\begin{proof}
    The case of $n\le 6$ was considered in Proposition \ref{gapstuff}, so we need only consider cases where $n \ge 7$ and  $\Aut(A_n) = S_n$. Further, note that $A_n$ and $S_n $ have the same Sylow $3$-subgroups. If $n = 3^i$, then the Sylow $3$-subgroups of $S_n$ are of the form $P_i:=C_3\wr C_3\wr...\wr C_3$ with $i $ copies of $C_3$. From Lemma \ref{wreath} we then see that $P_i$ is $i$-generated. For arbitrary $n$ we first take the $3$-adic decomposition $n = a_0 + 3a_1+3^2a_2+...+3^ka_k$, where each $a_i \in \{0,1,2\}$. Then, if $P$ is a Sylow $3$-subgroup of $S_n$, we have  $P\cong\prod P_i^{a_i}$, where $P_i \in \Syl_3(S_{3^i})$. From this  we have $P/P' = \prod P_i^{a_i}/(P_i')^{a_i}\cong C_3^{a_1 + 2a_2+...+ka_k}$. Therefore, we have that $P' = \Phi(P)$ and $P/\Phi(P) = C_3^{a_1 + 2a_2+...+ka_k} $. It then follows that $P$ is 2-generated if and only if $n \in \{6,7,8,9,10,11\}$. By \cite[Lemma 4.1]{mmsv}
    we have that all characters of $A_n$ are $\sigma$-fixed. We also have that all characters  of $S_n$ are $\sigma$-fixed since they are rational. Thus, it suffices to find all $n$ such that $k_0(B_0(A_n)) \in \{6,9\}$ and $k_0(B_0(S_n)) \in \{6,9\}$. This was done in \cite[Section 4]{sf}, which gives the result.
\end{proof}

\section{Groups of Lie Type with $2$-generated Sylow $3$-subgroups}

The goal of this section is to classify all almost simple groups with socle a simple group of Lie type that have $2$-generated Sylow $3$-subgroups.

\textbf{Our Setting:} Let $\mathbf{G}$ be a connected reductive algebraic group of simply connected type defined over the field $\overline{\mathbb{F}}_p$ for some prime $p$.  We consider the finite groups of Lie type $G = \mathbf{G}^F$ for some Steinberg endomorphism $F$. We primarily consider simple groups of the form $S = G/\mathbf{Z}(G)$ (Recall that $ ^2F(2)\,'$ was considered in Proposition \ref{gapstuff} so we may omit it here). Given such a simple group, which we call simple groups of Lie type, we denote $(\mathbf{G}_{ad})^F$ by $\tilde{S}$ (cf. \cite[Definition 9.14]{MT11}).  As we are considering almost simple groups, we discuss their structure here. Let $S$ be a simple group of Lie type. We see that $S \le \tilde{S} \le \Aut(S)$ and $\tilde{S}$ is referred to as the group of inner-diagonal automorphisms and $\Aut(S)/\tilde{S}$ has representatives that are field and graph automorphisms (See \cite[Theorem 24.24]{MT11}).  Using \cite[Theorem. 4.10.2]{GLS}, we have that a Sylow $3$-subgroup of $\Aut(S)$ can be decomposed as $(P_T\rtimes P_W) \rtimes \langle \tau_1,\tau_2, F_{p} \rangle$, where $P_T$ is a Sylow $3$-subgroup of a maximal torus, $T \le\tilde{S}$; $P_W$ is isomorphic to a Sylow $3$-subgroup of $N_{\tilde{S}}(T)/T$; $\tau_1$ and $\tau_2$ are (possibly trivial) graph automorphisms; and $F_{p}$ is the standard Frobenius map. We see further that $P_T\rtimes P_W$ is isomorphic to a Sylow $3$-subgroup of $\tilde{S}$. We also obtain a similar decomposition any $Q \in \syl_3(S)$. We have $Q \cong Q_T \rtimes Q_W$, where $Q_T$ is a Sylow $3$-subgroup of a maximal torus $T \le S$, and $Q_W$ is isomorphic to a Sylow $3$-subgroup of $N_{S}(T)/T$.

We first consider the situation for groups of Lie type defined in characteristic $p = 3$.

\begin{proposition}\label{def}
 Let $S$ be a simple group of 
 Lie type defined in characteristic 3 which is not isomorphic to one of the groups discussed in Proposition \ref{gapstuff}. If $S \le A \le \Aut(S)$, then $A$ does not have 2-generated Sylow $3$-subgroups.

\end{proposition}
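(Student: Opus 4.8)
The plan is to argue directly with the unipotent structure rather than through the toral decomposition recorded in the ``Our Setting'' paragraph, since that decomposition describes Sylow $3$-subgroups in non-defining characteristic, whereas here a Sylow $3$-subgroup is unipotent. Let $\mathbf{U}\le\mathbf{G}$ be the unipotent radical of an $F$-stable Borel subgroup and put $U=\mathbf{U}^F$. Since $|\mathbf{G}^F|_3=q^{N}$ with $q=3^f$ and $N$ the number of positive roots, $U$ is a Sylow $3$-subgroup of $\mathbf{G}^F$, hence also of $\mathbf{G}_{ad}^F=\tilde S$ and of $S$, because the diagonal quotient $\tilde S/S$ and the center $\mathbf{Z}(\mathbf{G})^F$ are $3'$-groups (every torus factor has order prime to $3$). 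Now fix $S\le A\le\Aut(S)$ and $P\in\Syl_3(A)$. As $S\normal A$, the subgroup $U=P\cap S$ is Sylow in $S$ and normal in $P$, and $P/U$ embeds in the $3$-part of $\operatorname{Out}(S)$. Since the diagonal automorphisms are $3'$, this $3$-part is generated by field and (in the $D_4$ and $^3D_4$ cases) graph automorphisms, so $P=U\rtimes D$ with $D$ a $3$-group of field/graph automorphisms; here $D$ is nontrivial precisely when $3\mid f$ or in the triality cases.

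I would first dispose of the case $D=1$, where $P=U$. The Chevalley and twisted commutator relations show that $[U,U]$ is generated by the non-simple root subgroups, so $U/[U,U]\cong\bigoplus_i X_i$ is the direct sum of the (relative) simple root subgroups. Each $X_i\cong(\mathbb{F}_{q^{e_i}},+)$ is elementary abelian, so $U/[U,U]$ has exponent $3$, whence $U^{3}\le[U,U]$ and $\Phi(U)=[U,U]$. Therefore $d(U):=\dim_{\mathbb{F}_3}U/\Phi(U)=\sum_i e_i f$, the sum of the field degrees of the simple root subgroups; for untwisted types this is $(\mathrm{rank})\cdot f$, and for twisted types it is the analogous sum over Frobenius-orbits of simple roots. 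A finite case check then shows $d(U)\ge 3$ for all $S$ of rank at least $3$, and for rank $\le 2$ the quantity falls below $3$ only for a short explicit list of groups over $\mathbb{F}_3$, which must be reconciled with Propositions \ref{gapstuff} and \ref{An}.

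The step I expect to be the main obstacle is the case $D\neq 1$. One cannot simply claim $d(P)\ge d(U)$: the number of generators of a normal subgroup can strictly exceed that of the overgroup, and this is exactly the wreath collapse of Lemma \ref{wreath}. Indeed, when $D$ acts on a simple root subgroup $X_\alpha\cong(\mathbb{F}_q,+)$ through the Frobenius, the normal-basis theorem identifies $X_\alpha\rtimes D$ with an iterated wreath product $C_3\wr\cdots\wr C_3$, whose Frattini quotient is much smaller than $\dim_{\mathbb{F}_3}X_\alpha$. The crux is therefore to compute $\Phi(P)$ for $P=U\rtimes D$ honestly, by analysing $P/[P,P]P^{3}$: I would first factor out $[D,U]$, which on each root subgroup is the image of $\sigma-1$ for the relevant field automorphism $\sigma$ and is computed from the Galois action, and then add the contribution of $D$, using Lemma \ref{idunno} to keep the generator count strictly above $d(D)$. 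The delicate point is to verify that after this collapse the surviving $\mathbb{F}_3$-dimension is still at least $3$; the tightest estimates occur over the smallest fields, and I expect the endgame to combine this bound with the explicit rank-$\le 2$ list from the $D=1$ analysis and the computations of Proposition \ref{gapstuff}.
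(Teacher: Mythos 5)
Your treatment of the case $P\in\Syl_3(S)$ (your $D=1$ case) is essentially the paper's own argument: both identify $\Phi(U)$ with $[U,U]$, read off $U/[U,U]$ as the product of the (relative) simple root subgroups, and conclude that $|U/\Phi(U)|=9$ forces $(r,q)\in\{(2,3),(1,9)\}$, all of whose groups land in the excluded list. That part is fine. The problem is the almost simple case with $3\mid |A/S|$, which you correctly single out as ``the main obstacle'' but then leave as a plan: you say you would compute $P/[P,P]P^3$ and that ``the delicate point is to verify that after this collapse the surviving $\mathbb{F}_3$-dimension is still at least $3$,'' without performing that verification. As written this is a proof sketch, not a proof, precisely at the step that carries all the content.

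Moreover, the deferred verification cannot succeed, so no completion of your strategy (or of the paper's statement as written) is possible. Take $S=\PSL_2(27)$ and $A=S\rtimes\langle F_3\rangle$ where $F_3$ is the Frobenius: here $U\cong(\mathbb{F}_{27},+)$ is a Sylow $3$-subgroup of $S$, and by the normal basis theorem $U$ is the regular $\mathbb{F}_3[\langle F_3\rangle]$-module, so $P=U\rtimes\langle F_3\rangle\cong C_3\wr C_3$, which is $2$-generated by the paper's own Lemma \ref{wreath} (and one checks $\Phi(P)=P'$, so $|P/\Phi(P)|=9$). Since $\PSL_2(27)$ is not among the groups excluded by Propositions \ref{gapstuff} and \ref{An}, this is exactly the collapse you warned about, realized; the same happens for $\PSL_2(3^{3^k})$ with the full $3$-part of the field automorphism group. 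You should be aware that the paper's proof does not resolve this either: its second paragraph only treats $S\le A\le\Aut(S)$ with $3\nmid|A/S|$, where the Sylow $3$-subgroup of $A$ coincides with that of $S$ and there is nothing new to prove, and it says nothing about field automorphisms of order divisible by $3$. So your instinct about where the difficulty lives is exactly right, but the honest conclusion of your own method is that the statement needs these groups added to the exceptions (or the hypothesis strengthened), not that the proposition holds as stated.
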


\begin{proof}
 Let $S$ be as above and let $P \in \syl_3(S)$. Let $r$ denote the rank of $\mathbf{G}$ and $q$ denote the size of the field over which $S$ is defined, unless $S$ is one of the Ree groups, in which case we take $q$ such that $S =\,^2G_2(q^2)$ and $q^2 = 3^{2a+1}$ for some $a$. First note that \cite[Thm.(2)]{War66} alongside the proof of \cite[Theorem 5.7]{sf} imply that in the case of $S =\,^2G_2(q^2)$ $9 > |P/P'| = |P/\Phi(P)|$, so the result holds in this case.  Next we claim that for all remaining cases $|P/\Phi(P)| = 9$ if and only if $(r,q) \in \{(2,3),(1,9)\}$. 
   We may assume $ S\neq G_2(3)$ by Proposition \ref{gapstuff}. Thus, arguing as in the proof of  \cite[proposition 5.7]{sf}, we have that $P/P'$ is elementary abelian. Therefore, $P/P'=P/\Phi(P)$ and the same proof in loc. cit. gives $|P/P'|= 9$ if and only if $(r,q) \in \{(2,3),(1,9)\}$. Note that all cases  with $(r,q) \in \{(2,3),(1,9)\}$ give groups isomorphic to groups discussed Proposition \ref{gapstuff}.

   Now assume $S \le A \le \Aut(S)$ with $3$ dividing $ |A/S|$. Then, arguing as in the fifth paragraph of the proof of \cite[Proposition 5.7]{sf}, we see that the Sylow $3$-subgroups of $A$ are not $2$-generated.
\end{proof}

We now consider groups of Lie type defined in characteristic $p\neq 3$. We begin by classifying when such groups have $2$-generated Sylow $3$-subgroups. 
For the remainder of the article, we will denote by $\GL_n^+(q)$ the general linear group over a field of size $q$ (similarly $\SL_n^+(q)$ and $\PSL_n^+(q)$ will denote the related special linear and projective special linear groups respectively) and we will denote by $\GL_n^-(q)$ the general unitary group over a field of size $q^2$ (similarly $\SL_n^-(q)$ and $\PSL_n^-(q)$ will denote the related special unitary and projective special unitary groups  respectively). 

The following results on these groups will be useful throughout.

\begin{proposition}
    \label{theo:gl} Let $q$ be a power of a prime and $\epsilon \in \{+,-\}$ with $3|(q-\epsilon)$. Let $\widetilde{P}$ be a Sylow 3-subgroup of $\operatorname{GL}_n^{\epsilon}(q)$.  Then $|\widetilde{P}/\Phi(\widetilde{P})|\le 81$ if and only  $n\in \{1,2,3,4,5,6,9,10,27\}$.  Further, we have that $|\widetilde{P}/\Phi(\widetilde{P})| = 9$ if and only if $n \in\{2,3\}$.
\end{proposition}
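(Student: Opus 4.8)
The plan is to reduce the whole statement, via Burnside's Basis Theorem, to an explicit count of the minimal number $d(\widetilde P)$ of generators of $\widetilde P$, which I will read off from the classical wreath-product description of Sylow $3$-subgroups of general linear and unitary groups. Write $3^a := (q-\epsilon)_3$ for the $3$-part of $q-\epsilon$, so $a \ge 1$ by hypothesis. The structural input I would first record is that $\widetilde P$ is isomorphic to a Sylow $3$-subgroup of the monomial group $C_{q-\epsilon}\wr S_n$, that is, to $(C_{3^a})^n \rtimes P_n$ with $P_n \in \Syl_3(S_n)$. To justify this it suffices to see that the monomial subgroup $C_{q-\epsilon}\wr S_n \le \GL_n^{\epsilon}(q)$ already contains a full Sylow $3$-subgroup, which is a comparison of $3$-adic valuations: lifting-the-exponent gives $v_3(q^i-\epsilon^i) = a + v_3(i)$ for every $i$ (uniformly in the linear and unitary cases), so that $v_3(|\GL_n^{\epsilon}(q)|) = na + v_3(n!)$, which is exactly $v_3(|C_{q-\epsilon}\wr S_n|)$.

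Next I would pass to the base-$3$ expansion $n = \sum_i a_i 3^i$ with $a_i \in \{0,1,2\}$. The standard decomposition of a Sylow $3$-subgroup of $S_n$ then yields
$$\widetilde P \;\cong\; \prod_i W_i^{\,a_i}, \qquad W_i := C_{3^a}\wr \underbrace{C_3 \wr \cdots \wr C_3}_{i}.$$
Since the Frattini subgroup of a direct product of $p$-groups is the product of the Frattini subgroups, the generator number is additive, $d(\widetilde P) = \sum_i a_i\, d(W_i)$. As $W_i$ is a wreath product of $i+1$ cyclic $3$-groups, Lemma \ref{wreath} gives $d(W_i) = i+1$, independently of $a$. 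Hence
$$\log_3 |\widetilde P/\Phi(\widetilde P)| \;=\; d(\widetilde P) \;=\; \sum_i a_i\,(i+1).$$

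Both assertions now become elementary arithmetic. The inequality $|\widetilde P/\Phi(\widetilde P)| \le 81$ is equivalent to $\sum_i a_i(i+1) \le 4$; since position $i$ carries weight $i+1 \ge 5$ for $i \ge 4$, only the digits $a_0,a_1,a_2,a_3$ can be nonzero, and enumerating the vectors $(a_0,a_1,a_2,a_3)$ with weighted sum at most $4$ produces exactly $n \in \{1,2,3,4,5,6,9,10,27\}$. Likewise $|\widetilde P/\Phi(\widetilde P)| = 9$ is equivalent to $\sum_i a_i(i+1) = 2$, whose only solutions are $a_0 = 2$ (giving $n = 2$) and $a_1 = 1$ (giving $n = 3$).

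The main obstacle is the first step: establishing the wreath-product description uniformly for both $\epsilon = \pm 1$. Once the valuation identity $v_3(q^i-\epsilon^i) = a + v_3(i)$ is in hand, it forces the monomial subgroup to carry a full Sylow $3$-subgroup and pins down the isomorphism type of $\widetilde P$; after that, the additivity of $d$ under direct products together with Lemma \ref{wreath} and the finite enumeration are routine. I therefore expect the valuation/structure argument (or, equivalently, an appeal to the classical Weir--Carter--Fong description) to require the most care.
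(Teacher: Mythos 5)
Your proof is correct and follows essentially the same route as the paper: decompose $\widetilde P$ as $\prod_i \widetilde P_i^{a_i}$ according to the base-$3$ digits of $n$, apply Lemma \ref{wreath} to see each factor is $(i+1)$-generated, and enumerate. The only difference is that you derive the wreath-product structure from the monomial subgroup via a valuation count, whereas the paper simply cites Weir \cite{weir} for that input.
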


\begin{proof}
      Let $\tilde{P}_0 = C_{3^a}$, where $a = (q-\epsilon)_3$. Then recursively define $\tilde{P}_i = \tilde{P}_{i-1} \wr C_3$. It follows from Lemma \ref{wreath} that $\tilde{P}_i$ is $(i+1)$-generated.  Let $n = a_o + a_13+a_23^2+...+a_t3^t$ be the 3-adic decomposition of $n$. Then, from the work of  \cite{weir}, if $\tilde{P}\in \Syl_3(\GL^{\epsilon}_n(q))$, we have   $ \tilde{P} \cong \displaystyle{\Pi_{i=0}^t\tilde{P}_i^{a_i}}$, where $\tilde{P}_i$ is as described above. It follows that $\tilde{P}'  = \displaystyle{\Pi_{i=0}^t\tilde{P}_i'^{a_i}}$, and, arguing as in the above discussion, we obtain $|\tilde{P}/\Phi{(\tilde{P})}| = 3^{a_o + 2a_1 + 3a_2...(t+1)a_{t}}$. Therefore, the stated result holds. 
\end{proof}

\begin{lemma}\label{lem:sl}
Let $\widetilde{P}$ be a Sylow $3$-subgroup of $\Gl_n^{\epsilon}(q)$. Further let $P$  be a Sylow $3$-subgroup of $\SL_n^{\epsilon}(q)$ and $Q$ a Sylow $3$-subgroup of $\PSL^{\epsilon}_n(q)$. Then the following hold. 
\begin{enumerate}
    \item $|\widetilde{P}:\Phi(\widetilde{P})|/  |P:\Phi(P)|\leq 3$.
    \item $|P:\Phi(P)|/ |Q:\Phi(Q)|\leq 3$
\end{enumerate}

\end{lemma}
\begin{proof}
     If $3\nmid(q-\epsilon)$, then $\tilde{P} = P \cong Q$ and the statement is trivial, so we may assume $3|(q-\epsilon)$.
     To show (1) we let $x$ be a generator of a Sylow $3$-subgroup of $C_{q-\epsilon}\le \mathbb{F}_{q^2}^\times$, and choose $\widetilde{P} \in \syl_3(\GL_n^{\epsilon}(q))$ such that $X:= \mathrm{diag}(x, I_{n-1})\in \widetilde P$.
    First, we note that $P= \widetilde{P}\cap \SL_n^{\epsilon}(q)$ is a Sylow 3-subgroup of $\SL_n^{\epsilon}(q)$. 
   Next, we note that, since $\langle X\rangle$ forms a complete set of coset representative for $\widetilde{P}/P$, every element of $\widetilde{P}$ can be written as an element of $P$ multiplied by an element of $\langle X\rangle$. Therefore, if $Y$ is a minimal generating set of $P$, then $Y\cup\{X\}$ is a generating set of $\widetilde{P}$. This is sufficient to show (1).
   
   For (2)  if $\pi:\SL_n^{\epsilon}(q) \rightarrow \PSL_n^{\epsilon}(q)$ is the usual projection map, we may choose $Q$ such that $Q  = \pi(P) \cong  P /(P \cap \mathbf{Z}(\SL_n(q))$. We also note that, since
    $\mathbf{Z}(\SL^{\epsilon}_n(q))$ is cyclic, $P \cap \mathbf{Z}(\SL^{\epsilon}_n(q)) =\langle A\rangle$ for some $A$. Let $\{A_1,...,A_n\}$ be a complete set of coset representatives for a minimal generating set of $Q$. Then $\{A_1,...,A_n,A\}$ must generate $P$. This is sufficient for the desired result.
   \end{proof}

\begin{proposition}\label{theo:sl}  Let $P$ be a Sylow 3-subgroup of $\SL_n^{\epsilon}(q)$, where  $q$ is a power of a prime such that $3|(q-\epsilon)$.  If $ 9 \le|P/\Phi(P)|\le 27$,  then $n\in \{3,4,5,6,9,10\}$. Furthermore, $|P/\Phi(P)| = 9$ if and only if $n \in \{3,4\}$. 
\end{proposition}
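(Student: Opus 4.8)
The plan is to sandwich $\lvert P/\Phi(P)\rvert$ between $\lvert\widetilde{P}/\Phi(\widetilde{P})\rvert$ and $\tfrac{1}{3}\lvert\widetilde{P}/\Phi(\widetilde{P})\rvert$, reduce to a short list of $n$ via Proposition~\ref{theo:gl}, and then pin down the exact value by splitting on whether $3\mid n$. Writing $\widetilde{P}\in\Syl_3(\GL_n^{\epsilon}(q))$ with $P=\widetilde{P}\cap\SL_n^{\epsilon}(q)$, Lemma~\ref{lem:sl} gives $\lvert\widetilde{P}/\Phi(\widetilde{P})\rvert\le 3\,\lvert P/\Phi(P)\rvert$. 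Hence $\lvert P/\Phi(P)\rvert\le 27$ forces $\lvert\widetilde{P}/\Phi(\widetilde{P})\rvert\le 81$, so Proposition~\ref{theo:gl} restricts us to $n\in\{1,2,3,4,5,6,9,10,27\}$. It then remains to compute $\lvert P/\Phi(P)\rvert$ for these nine values and keep those in $\{9,27\}$.

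For $3\nmid n$ (the values $n\in\{1,2,4,5,10\}$) I would use a clean reduction: here the $3$-adic expansion of $n$ has nonzero units digit, so $\widetilde{P}\cong C_{3^a}\times\widetilde{Q}$ with $\widetilde{Q}\in\Syl_3(\GL_{n-1}^{\epsilon}(q))$ acting on the first $n-1$ coordinates and the factor $C_{3^a}$ acting as scalars on the last coordinate. Since the determinant condition determines the last coordinate, the projection killing that factor restricts to an isomorphism $P\xrightarrow{\ \sim\ }\widetilde{Q}$ given by $g\mapsto(g,\det(g)^{-1})$. Thus $\lvert P/\Phi(P)\rvert=\lvert\widetilde{Q}/\Phi(\widetilde{Q})\rvert$, and by the exponent formula established in the proof of Proposition~\ref{theo:gl} (applied to $\GL_{n-1}$) this equals $9$ exactly when $n-1=3$, i.e.\ $n=4$, equals $27$ when $n-1\in\{4,9\}$, i.e.\ $n\in\{5,10\}$, and equals $1,3$ for $n=1,2$.

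For $3\mid n$ (the values $n\in\{3,6,9,27\}$) I would argue through the conjugation action of $A=\operatorname{diag}(a,I_{n-1})$. As in the proof of Lemma~\ref{lem:sl}, $\langle A\rangle\cap P=1$ and $\langle A\rangle$ is a transversal for $P$ in $\widetilde{P}$, so $\widetilde{P}=P\rtimes\langle A\rangle$ with $\lvert A\rvert=3^a$; thus $A$ acts as a unipotent operator on $\bar W:=P/\Phi(P)$, and a computation with $\widetilde{P}^{\mathrm{ab}}$ (using the wreath-product abelianization formula) shows $\dim_{\F_3}\bar W/(A-1)\bar W=\dim_{\F_3}\widetilde{P}/\Phi(\widetilde{P})-1$. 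So the number of Jordan blocks of $A$ on $\bar W$ is fixed in advance, and the only unknown is $\operatorname{rank}(A-1\mid\bar W)$, i.e.\ the block sizes. For a single wreath tower, $n=3^s$ (the values $3,9,27$), I would prove by induction on $s$ — with base case the explicit computation for $\SL_3^{\epsilon}(q)$, where $P=T_0\rtimes C_3$ has $P^{\mathrm{ab}}=(T_0)_{C_3}\times C_3\cong C_3\times C_3$ — that $A$ has exactly one Jordan block of size $2$ and the rest of size $1$, giving $d(P)=s+1$ and $\lvert P/\Phi(P)\rvert=9,27,81$ for $n=3,9,27$. For $n=6=3+3$ I would instead use $P=(P_3\times P_3)\rtimes\langle B\rangle$, where each $P_3\cong\Syl_3(\SL_3^{\epsilon}(q))$ and $B=(\operatorname{diag}(a,I_2),\operatorname{diag}(a^{-1},I_2))$ acts on each factor $P_3/\Phi(P_3)$ with a single size-$2$ block as in the base case; counting coinvariants then gives $d(P)=2+1=3$ and $\lvert P/\Phi(P)\rvert=27$.

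Collecting the two cases, $\lvert P/\Phi(P)\rvert\in\{9,27\}$ precisely for $n\in\{3,4,5,6,9,10\}$, with value $9$ exactly for $n\in\{3,4\}$ and $27$ for $n\in\{5,6,9,10\}$, while the other admissible values give $1,3$ ($n=1,2$) or $81$ ($n=27$). I expect the genuine difficulty to sit entirely in the case $3\mid n$. Unlike the $3\nmid n$ case, where $P$ is literally a $\GL$-type Sylow subgroup, here $P$ is a wreath tower intersected with $\SL$, and the bound $d(P)\le d(\widetilde{P})$ is \emph{not} automatic — subgroups of $3$-groups can require more generators than the ambient group — so it must be extracted from the module computation. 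The delicate point is that the single determinant relation and the tower structure interact differently for a single tower $n=3^s$ (where $d(P)=\dim_{\F_3}\widetilde{P}/\Phi(\widetilde{P})$) than for a product of towers such as $n=6$ (where $d(P)=\dim_{\F_3}\widetilde{P}/\Phi(\widetilde{P})-1$); getting this bookkeeping exactly right, rather than merely bounding $d(P)$, is the crux.
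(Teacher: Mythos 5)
Your reduction to $n\in\{1,2,3,4,5,6,9,10,27\}$ via Lemma~\ref{lem:sl} and Proposition~\ref{theo:gl}, and your treatment of $3\nmid n$ (identifying $P$ with a Sylow $3$-subgroup of $\GL_{n-1}^{\epsilon}(q)$ via the block $\mathrm{diag}(A,\det(A)^{-1})$), coincide with the paper's proof. Where you genuinely diverge is the case $3\mid n$: you compute $d(P)$ exactly by analyzing the unipotent action of $\mathrm{diag}(a,I_{n-1})$ on $P/\Phi(P)$ and counting Jordan blocks and coinvariants, arriving at $|P/\Phi(P)|=9,27,27,81$ for $n=3,6,9,27$. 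The paper instead writes $P=C_{3^a}^{3^i-1}\rtimes P_W$ for $n=3^i$ (and $C_{3^a}^{5}\rtimes(C_3\times C_3)$ for $n=6$) and applies Lemma~\ref{idunno} to get the strict inequality $|P/\Phi(P)|>|P_W/\Phi(P_W)|=3^i$ (resp.\ $>9$), which, together with an explicit two-element generating set for $n=3$, is all the proposition actually needs: one only has to exclude the value $9$ for $n\in\{6,9\}$ and the whole range $[9,27]$ for $n=27$, not determine exact values. Your route buys more --- the exact generator numbers, and a correct diagnosis that $d(P)\le d(\widetilde P)$ is not automatic and indeed fails for $n=6$ --- at the cost of the one step you leave as an assertion: that for $n=3^s$ with $s\ge 2$ the operator has exactly one Jordan block of size $2$. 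That claim is true (for instance via $P^{\mathrm{ab}}\cong K_W\times W^{\mathrm{ab}}$ with $K$ the diagonal part and $K_W\cong\mathrm{coker}\bigl(W_1^{\mathrm{ab}}\to W^{\mathrm{ab}}\bigr)\cong C_3$ for a point stabilizer $W_1$), and your base case and the $n=6$ bookkeeping check out, but it is the only substantive point your write-up does not actually prove, and it is precisely the computation the paper's Lemma~\ref{idunno} argument is designed to sidestep.
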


\begin{proof}
 First, we consider the case in which $3\nmid n$. By 
  order considerations we see that we can construct $P \in \Syl_3(\operatorname{SL}^{\epsilon}_n(q))$ to be the set
$$P := \left\{
\begin{pmatrix}
A&0\\
0&\det(A)^{-1}
\end{pmatrix}
|A \in \tilde{P}\right\}$$
for a fixed $\tilde P \in \Syl_3(\GL_{n-1}^{\epsilon}(q))$. Thus, a Sylow $3$-subgroup of $\SL_n^{\epsilon}(q)$ is isomorphic to a Sylow $3$-subgroup of $\GL^{\epsilon}_{n-1}(q)$ and the result follows from  the proof of Proposition \ref{theo:gl}.

Now assume $3|n$. Proposition \ref{theo:gl} and Lemma \ref{lem:sl} give that the only cases we need to consider are when $n\in \{3,6,9,27\}$. If $n = 3^i$, let $P_W = C_3\wr..\wr C_3$ with $i$ copies of $C_3$. In this case we see explicitly that $P = C_{3^a}^{3^{i}-1}\semi P_W $, where $3^a = (q-\epsilon)_3$. Applying Lemma \ref{idunno} and Lemma \ref{wreath} gives $|P/\Phi(P)| > |P_W/\Phi(P_W)| = 3^i $. Thus, if $n =\{9,27\}$ we have that $P$ is not $2$-generated and, if $n = 27$, $P$ is not $3$-generated.  If $n=3$, we have that $|P/\Phi(P)|\ge 9$ and we can explicitly find an order $2$ generating set, which gives $|P/\Phi(P)|= 9$. For $n= 6$  we instead take $P_W\cong C_3 \times C_3$ and the argument follows similarly to the case of $n = 9$ above.
\end{proof}

\begin{proposition}\label{theo:psl}
Let $q$ be a power of a prime that is distinct from 3 and let $Q$ be a Sylow $3$-subgroup of $\operatorname{\PSL}_n^{\epsilon}(q)$. If $3|(q-\epsilon)$, then $|Q/\Phi(Q)|= 9$ if and only if $n=3$ or $n= 4$. If instead $3|(q +\epsilon)$, then $|Q/\Phi(Q)|= 9$ if and only if $n \in \{4,5,6,7\}$.
\end{proposition}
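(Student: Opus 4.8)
The plan is to split on the two possibilities $3\mid(q-\epsilon)$ and $3\mid(q+\epsilon)$, which are mutually exclusive and (since $p\neq 3$) exhaustive, because one of them divides $q^2-1$. In each case I would climb from $\PSL_n^\epsilon(q)$ up to $\SL_n^\epsilon(q)$ and $\GL_n^\epsilon(q)$ and control the loss of generators coming from the central quotients. Fix $P\in\Syl_3(\SL_n^\epsilon(q))$ and $Q\in\Syl_3(\PSL_n^\epsilon(q))$, and set $C=P\cap\mathbf{Z}(\SL_n^\epsilon(q))$, a cyclic central $3$-subgroup with $|C|=\gcd(n,q-\epsilon)_3$. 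Since $Q\cong P/C$, the whole question reduces to whether passing to $P/C$ changes the Frattini quotient, so I would try to reduce everything to the single statement that $C\le\Phi(P)$: indeed, if $C\le\Phi(P)$ then $\Phi(Q)=\Phi(P)/C$ and hence $|Q/\Phi(Q)|=|P/\Phi(P)|$.

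Consider first $3\mid(q-\epsilon)$. Here Proposition \ref{theo:sl} already computes $|P/\Phi(P)|$, giving $|P/\Phi(P)|=9$ exactly for $n\in\{3,4\}$ while $9\le|P/\Phi(P)|\le 27$ only for $n\in\{3,4,5,6,9,10\}$; for $n\le 2$ the group $P$ is cyclic or trivial, so $|Q/\Phi(Q)|\le 3$ and these $n$ are excluded. Among the window $\{3,4,5,6,9,10\}$ one has $|C|=\gcd(n,q-\epsilon)_3=1$ precisely for $n\in\{4,5,10\}$, where then $Q\cong P$ and $|Q/\Phi(Q)|=|P/\Phi(P)|$ directly. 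The remaining cases $n\in\{3,6,9\}$ have $C\neq 1$, and Lemma \ref{lem:psl} tells us the number of generators can drop by at most one; so these are the only delicate cases, and the answer $n\in\{3,4\}$ follows once I prove the key claim: \emph{for $n\in\{3,6,9\}$ one has $C\le\Phi(P)$.}

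The main obstacle is exactly this key claim, and I would prove it by exhibiting the generator of $C$ as a commutator modulo cubes. Using the decomposition $P=T\rtimes P_W$, with $T$ the $3$-part of the $\SL$-torus and $P_W\le S_n$ a Sylow $3$-subgroup of the Weyl group (as in the proof of Proposition \ref{theo:sl}), the generator of $C$ is the scalar $z=\mathrm{diag}(\lambda,\dots,\lambda)$, i.e. the ``all-equal'' vector $(c,\dots,c)$ in additive torus coordinates, subject to $nc\equiv 0$. When $n=3^i$, $P_W$ contains an $n$-cycle $w$ (a base-$3$ odometer element of order $3^i$), so the ambient $\GL$-torus $C_{3^a}^n$ is the regular module $\Z/3^a[\langle w\rangle]$; working in $\bar R=\mathbb{F}_3[x]/(x^n-1)=\mathbb{F}_3[x]/(x-1)^n$, the element $z$ corresponds to $\bar c\,(x-1)^{n-1}$, which lies in $(x-1)^2\bar R$ because $n-1\ge 2$. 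Since $(x-1)^2\bar R$ is the image of $[T,w]\le P'$ modulo $3T$, and both $[T,w]$ and $3T=T^3\le P^3$ lie in $\Phi(P)$, a short $3$-adic descent yields $z\in\Phi(P)$. For $n=6$ one replaces the single cycle by the two disjoint $3$-cycles in $P_W\cong C_3\times C_3$ and adds the two contributions. The genuinely delicate point is $n=9$ with $9\,\|\,(q-\epsilon)$, where $z$ has order $9$ and is a unit multiple of the torus generator; it is precisely the cohomological triviality of the regular module, equivalently $(x-1)^{n-1}\in(x-1)^2$ in $\mathbb{F}_3[x]/(x-1)^n$, that rescues this case.

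Finally consider $3\mid(q+\epsilon)$. Then $(q-\epsilon)_3=1$, so the centers of both $\GL_n^\epsilon(q)$ and $\SL_n^\epsilon(q)$ have trivial $3$-part; hence $C$ is trivial and $Q\cong\Syl_3(\SL_n^\epsilon(q))\cong\Syl_3(\GL_n^\epsilon(q))$. By Weir's theorem in the order-$2$ case, $\Syl_3(\GL_n^\epsilon(q))\cong C_{3^a}\wr\Syl_3(S_{\lfloor n/2\rfloor})$ with $3^a=(q+\epsilon)_3$, which has exactly the same form as the group analyzed in Proposition \ref{theo:gl} but with $n$ replaced by $m=\lfloor n/2\rfloor$. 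Therefore $|Q/\Phi(Q)|=9$ iff $m\in\{2,3\}$, i.e. iff $n\in\{4,5,6,7\}$, as claimed.
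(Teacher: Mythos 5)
Your proposal reaches the correct conclusions, and in the cases $3\nmid n$ (where the centre has trivial $3$-part, so $Q\cong P$) and $3\mid(q+\epsilon)$ (reduction to $\GL_{\lfloor n/2\rfloor}(q^2)$ via Weir) it coincides with the paper's argument. Where you genuinely diverge is the treatment of $n\in\{3,6,9\}$ with $3\mid(q-\epsilon)$. The paper never asks whether $C=P\cap\mathbf{Z}(\SL_n^{\epsilon}(q))$ lies in $\Phi(P)$: for $n=3$ it argues that $Q$, a quotient of the $2$-generated group $P$, is either $2$-generated or cyclic, and rules out the cyclic case (it would force $P/\mathbf{Z}(P)$, hence $P$, to be abelian); for $n\in\{6,9\}$ it writes $Q=(C_{3^a}^{k}/Z_3)\rtimes P_W$ and applies Lemma \ref{idunno} directly to this semidirect product to get $|Q/\Phi(Q)|>|P_W/\Phi(P_W)|=9$. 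You instead prove the sharper statement $C\le\Phi(P)$, so that $|Q/\Phi(Q)|=|P/\Phi(P)|$ and everything follows from Proposition \ref{theo:sl}. Your key claim is in fact true and your group-algebra strategy does prove it: writing the ambient torus as $R=\Z/3^a[x]/(x^n-1)$, one has $1+x+\cdots+x^{n-1}=(x-1)^{n-1}+3g(x)$ with $g(1)=n/3$, so the generator $c\sum x^i$ of $C$ (with $nc=0$) splits as $(x-1)\cdot\bigl(c(x-1)^{n-2}\bigr)\in[T,w]$ plus $3cg(x)$. The one point your sketch elides is that this error term lands a priori only in $3R\cap T$, not visibly in $3T$: one must adjust $cg(x)$ by a $3$-torsion element of $R$ to repair its coefficient sum before cubing, which does work (subtract $3^{a-1}$ times a coordinate vector), including in the case $n=9$, $a\ge 2$ that you flag. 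The trade-off is clear: the paper's use of Lemma \ref{idunno} is shorter and sidesteps the module computation entirely, while your route, once the descent is written out, buys the exact equality $|Q/\Phi(Q)|=|P/\Phi(P)|$ for $3\mid n$, which the paper neither establishes nor needs.
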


\begin{proof}
     Assume $3|(q-\epsilon)$. Let $\pi:\SL_n^{\epsilon}(q) \rightarrow \PSL_n^{\epsilon}(q)$ be the usual projection map, and let $P \in \Syl_3(\SL^{\epsilon}_n(q))$. Define $Q:= P/(P\cap \mathbf{Z}(\SL^{\epsilon}_n(q))\ \cong \pi(P)\in \Syl_3(\PSL^{\epsilon}_n(q))$.  In cases where $3\nmid n$ we have that $ Q\cong P$, thus, from  Proposition \ref{theo:sl}, we have that  $Q$ is $2$-generated and if and only if $n = 4 $.

     Now assume $3|n$. Using Lemma \ref{lem:sl}, Proposition \ref{theo:sl}, and the fact that $|Q/\Phi(Q)|$ must be less than  $|P/\Phi(P)| $ since $Q$ is a homomorphic image of $P$, we obtain that the only possible choices of $n$ which could give 2-generated Sylow 3-subgroups are $n \in \{3,6,9\}$.

  Assume  $n=3$. By Proposition \ref{theo:sl} we see that $P$ is $2$-generated, so, since $Q$ is the homomorphic image of $P$, $Q$ must either be $2$-generated or cyclic. Assume by way of contradiction that $Q$ is cyclic. 
    We have $Q\cong P/Z_3$, where $Z_3 = P\cap \mathbf{Z}(\SL^{\epsilon}_n(q))$. Since $Z_3\subseteq \mathbf{Z}(P)$, we have that $P/\mathbf{Z}(P)$ is a quotient group of $Q$. Thus, if $Q$ is cyclic, then $P/\mathbf{Z}(P)$ is cyclic. It is a well known fact that $P/\mathbf{Z}(P)$ being cyclic implies that $P$ is abelian. This contradicts Proposition \ref{theo:sl}.

Now assume $n\in \{6,9\}$. Then, arguing as in the proof of Proposition \ref{theo:sl}, we have that $Q = (C_{3^a}^{8}\rtimes P_W)/Z_3$, where $Z_3$ is as above and $P_W = C_3\times C_3$ or $P_W= C_3\wr C_3$ for $n=6$ and $n = 9$ respectively. We see further that $P\cap \mathbf{Z}(\SL_n^{\epsilon}(q))\le C_{3^a}^{8}$ and $Q =  C_{3^a}^{8}/Z_3\rtimes P_W $. Applying Lemma \ref{idunno}  gives $|Q/\Phi(Q)| > 9$.

If we instead assume $3|(q+\epsilon)$, then we have $3\nmid [\Gl_n^\epsilon(q):\SL_n^\epsilon(q)]$ and $3\nmid |\mathbf{Z}(\SL_n^\epsilon(q))|$, so it is sufficient to consider Sylow $3$-subgroups of $\Gl_n^\epsilon(q)$. The work of \cite{weir} gives that $Q$ is isomorphic to a Sylow subgroup of $\Gl_{\lfloor n/2\rfloor}^\epsilon(q^2)$ and the proof of Proposition \ref{theo:gl} gives the desired result.
\end{proof}

\begin{proposition}\label{theo:classical}
Let $S$ be a simple group  of Lie type defined in  characteristic $p\neq 3$. Then $P\in \syl_3(S)$ is 2-generated if and only if $S$ is one of the following:
\begin{enumerate}
    \item[(i)] $\PSL^{\epsilon}_n(q)$ with   $n\in \{3,4\}$ and $3|(q-\epsilon)$
    \item[(ii)] $\PSL^{\epsilon}_n(q)$ with $n\in \{4,5,6,7\}$ and $3|(q+\epsilon)$
    \item[(iii)] $\PSp_{2n}(q)$  with $n=2$ or $n=3$
    \item[(iv)] $\operatorname{P}\Omega_{2n+1}(q)$  with $n=2$ or $n=3$
     \item[(v)] $\operatorname{P\Omega}_8^-(q)$
    \item[(vi)] $\operatorname{G}_2(q),\,^3\operatorname{D}_4(q)$ or $\,^2\operatorname{F}_4(q^2)'$
\end{enumerate}

\end{proposition}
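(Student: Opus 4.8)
The plan is to proceed type by type through the simple groups of Lie type in characteristic $p\neq 3$, in each case pinning down the isomorphism type of $P$ and then reading off $|P/\Phi(P)|$ from the structural results above. Type $A$ is handled entirely by Proposition \ref{theo:psl}, which already yields (i) and (ii). Among the remaining families, the Suzuki groups $^2B_2(q^2)$ have order prime to $3$ (so $P$ is trivial and never $2$-generated), while the Ree groups $^2G_2(q^2)$ are in defining characteristic and fall under Proposition \ref{def}. It therefore remains to treat the classical types $B,C,D,{}^2D$ and the exceptional types $G_2,{}^3D_4,F_4,{}^2F_4,E_6,{}^2E_6,E_7,E_8$. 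Throughout I would work with the decomposition $\Syl_3(\mathbf{G}^F)=P_T\rtimes P_W$ from our setting, where $P_T$ is the $3$-part of a suitable maximal torus and $P_W$ a Sylow $3$-subgroup of the relevant (relative) Weyl group, and then pass to the simple group $S$. For types $B,C,D,{}^2D$ the kernel $\mathbf{Z}(\mathbf{G}^F)$ is a $2$-group, so $P$ is already isomorphic to a Sylow $3$-subgroup of the corresponding matrix group and no correction is needed; the diagonal/central passage for type $A$ is controlled by Lemmas \ref{lem:sl} and \ref{lem:psl}.

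For the classical types I would split into the cases $3\mid q-1$ and $3\mid q+1$, which fix the minimal block and hence the value $3^a=(q-\epsilon)_3$. Using Weir's description \cite{weir} together with the Weyl group action, the key observation is that for the symplectic groups $\PSp_{2n}(q)$ and the odd orthogonal groups $\operatorname{P}\Omega_{2n+1}(q)$ the sign-change part of the hyperoctahedral Weyl group contributes nothing to the $3$-part, so in both congruence cases $P$ is isomorphic to a Sylow $3$-subgroup of $\GL_n^{\epsilon}(q)$; Proposition \ref{theo:gl} then gives $2$-generation exactly when $n\in\{2,3\}$, establishing (iii) and (iv). For type $D_n^{+}$ the same reduction identifies $P$ with a Sylow $3$-subgroup of $\GL_n^{\epsilon}(q)$, and since the genuinely new simple groups occur only for $n\ge 4$ (as $D_2=A_1A_1$ and $D_3=A_3$), Proposition \ref{theo:gl} shows these are at least $3$-generated, so none appear. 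For the twisted type $^2D_n$ the relevant relative Weyl group is of type $B_{n-1}$, whose Sylow $3$-subgroup is a Sylow $3$-subgroup of $S_{n-1}$; here $P_W$ acts on a torus $3$-part of rank $n-1$, and I would use Lemma \ref{wreath} to see that for $n=4$ the resulting group is $C_{3^a}\wr C_3$ (the extra factor of $3$ coming from the $q^2+q+1$, resp.\ $q^2-q+1$, term of $q^3\mp 1$) and hence $2$-generated, while for $n\ge 5$ a leftover cyclic factor forces $\ge 3$ generators. This isolates $\operatorname{P\Omega}_8^-(q)$ and gives (v).

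For the exceptional groups I would organize the argument around the Sylow $3$-subgroup of the relative Weyl group $W$. When this is non-cyclic --- which happens for $F_4,E_6,{}^2E_6,E_7,E_8$ --- we have $[P_W:\Phi(P_W)]\ge 9$, and since $P_T$ is nontrivial (as $3\mid q\mp 1$), Lemma \ref{idunno} forces $[P:\Phi(P)]>9$, so these groups are never $2$-generated and correctly do not appear. When the Sylow $3$-subgroup of $W$ is trivial, as for $^2F_4(q^2)$, the group $P=P_T$ is abelian and it suffices to check that its $3$-rank equals $2$. The remaining cases $G_2(q)$ and $^3D_4(q)$ have $P_W\cong C_3$; here, using that these groups contain an $A_2$-type subsystem subgroup whose Sylow $3$-subgroup is a full Sylow $3$-subgroup of the ambient group, I would identify $P$ with $\SL_3^{\epsilon}(q)$'s Sylow $3$-subgroup $C_{3^a}\wr C_3$ and conclude $2$-generation from Lemma \ref{wreath}, matching (vi).

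I expect the main obstacle to be precisely the twisted and exceptional groups $^2D_4$, $^3D_4$ and $^2F_4(q^2)$, where neither the torus nor the relative Weyl group is split and one cannot simply import the $\GL_n^{\epsilon}(q)$ computation of Proposition \ref{theo:gl}. The delicate point is the borderline case $P_W\cong C_3$: one must determine exactly how this $C_3$ permutes the cyclic factors of $P_T$, and account for the additional factors of $3$ contributed by the cyclotomic terms $q^2+q+1$ and $q^2-q+1$, in order to distinguish the $2$-generated wreath product $C_{3^a}\wr C_3$ from a structure of the form $(C_{3^a}\wr C_3)\times C_{3^a}$, which is $3$-generated. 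Making this distinction --- ideally uniformly, rather than group-by-group from the data in \cite{GLS} and following the analysis of \cite[Section 5]{sf} --- is the crux, and I would settle it by combining the explicit torus and Weyl group information with Lemmas \ref{idunno} and \ref{wreath}.
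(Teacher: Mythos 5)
Your treatment of the classical groups matches the paper's in substance: type $A$ is delegated to Proposition \ref{theo:psl}, and for types $B$, $C$, $D$, ${}^2D$ the paper likewise uses Weir's identification \cite{weir} of the Sylow $3$-subgroups with those of $\GL_n(q)$ or $\GL_n(q^2)$ together with Proposition \ref{theo:gl}, and isolates $\operatorname{P\Omega}_8^-(q)$ via the identification with $\operatorname{P\Omega}_7(q)$. The genuine problems are concentrated in the exceptional groups. For $F_4$, $E_6^{\epsilon}$, $E_7$, $E_8$ you replace the paper's citation of \cite[Proposition 5.17]{sf} with a direct appeal to Lemma \ref{idunno} applied to $P_T\rtimes P_W$. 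This closes the cases with center of order prime to $3$, but for $E_6^{\epsilon}(q)$ with $3\mid(q-\epsilon)$ the decomposition $P_T\rtimes P_W$ is a Sylow $3$-subgroup of $\tilde S$, and $S$ has index $3$ in $\tilde S$. Since the Sylow $3$-subgroup of $W(E_6)$ is $C_3\wr C_3$ with Frattini quotient of order only $9$, Lemma \ref{idunno} gives $[P_{\tilde S}:\Phi(P_{\tilde S})]\ge 27$, and passing to the index-$3$ subgroup $P_S$ (as in Lemma \ref{lem:sl}) can lose a factor of $3$, leaving only $[P_S:\Phi(P_S)]\ge 9$. That does not exclude $2$-generation, so this family is genuinely open in your argument as written.

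Second, your structural identifications for $G_2(q)$, ${}^3\operatorname{D}_4(q)$ and ${}^2\operatorname{F}_4(q^2)'$ are incorrect, even though the conclusion (all are $2$-generated) is right. A Sylow $3$-subgroup of $G_2(q)$ has order $3(q-\epsilon)_3^2$ and is isomorphic to one of $\SL_3^{\epsilon}(q)$, i.e.\ $C_{3^a}^2\rtimes C_3$, not to $C_{3^a}\wr C_3$, which has order $3^{3a+1}$ and is a Sylow $3$-subgroup of $\GL_3^{\epsilon}(q)$; the order count alone rules out your claimed structure. Similarly a Sylow $3$-subgroup of ${}^3\operatorname{D}_4(q)$ has order $9(q-\epsilon)_3^2$ and is an extension of a Sylow $3$-subgroup of $\SL_3^{\epsilon}(q)$ by a diagonal automorphism of order $3$ \cite[Proposition 2.2]{DM87}, again not a wreath product of the shape you describe. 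And the Sylow $3$-subgroup of ${}^2\operatorname{F}_4(q^2)'$ is \emph{not} abelian: it is isomorphic to that of $\operatorname{SU}_3(q^2)$ with $3\mid q^2+1$ \cite[Main Theorem]{Mal90}, hence nonabelian of order $3(q^2+1)_3^2$ — the torus normalizer relevant to the $3$-structure has a nontrivial Weyl-group contribution, so the premise "$P=P_T$ is abelian" fails. The paper handles these three cases by importing the structural results of \cite{Coo81,Kle88}, \cite{DM87} and \cite{Mal90} and then applying Proposition \ref{theo:sl} (plus an explicit generating pair for ${}^3\operatorname{D}_4$); you need those inputs, since the borderline analysis you flag as the crux cannot be carried out with the wrong isomorphism types.
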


\begin{proof}
If $S = \PSL^{\epsilon}_n(q)$, then the result is given by proposition \ref{theo:psl}, so we may assume $S\neq \PSL^{\epsilon}_n(q)$.
If $S = \PSp_{2n}(q)$ or $\operatorname{P\Omega}_{2n+1}(q)$ and   $3|(q-1)$, then \cite{weir} gives  that $P$ can be identified with a Sylow $3$-subgroup of $\GL_n(q)$, so, by Proposition \ref{theo:gl}, we  get that 
$P$ is $2$-generated if and only if $n = 2$ or $n=3$. If instead $3|(q+1)$, then loc. cit. gives us that $P$ is isomorphic to a Sylow $3$-subgroup of $\GL_{n}(q^2)$. Thus, proposition \ref{theo:gl} again gives us that $P$ is $2$-generated if and only if $n = 2$ or $n=3$.

Now let $S = \operatorname{P\Omega^{\epsilon}_{2n}(q)}$ with $\epsilon\in\{\pm1\}$. Again using \cite{weir} we have that Sylow 3-subgroups can be identified with those of either $\operatorname{P\Omega_{2n-1}(q)}$ or $\operatorname{P\Omega_{2n+1}(q)}$. Thus, the above paragraph gives that, if $n > 4$, we have that $P$ is not 2-generated. In the case of $n = 4$ we see from the order polynomials that, if $\epsilon =-1$, we have Sylow $3$-subgroups correspond to those of $\operatorname{P\Omega_{7}(q)}$, and, if $\epsilon = 1$,   Sylow $3$-subgroups correspond to those of $\operatorname{P\Omega_{9}(q)}$. Therefore, only in the case of $S= \operatorname{P\Omega}^-_8(q)$ do we have $2$-generated Sylow $3$-subgroups.

Now we consider exceptional groups of Lie type. Note that $3$ does not divide the order of the Suzuki groups $^2\operatorname{B}_2(q^2)$. Also note that the Ree Groups $^2\operatorname{G}_2(q^2)$ are only defined in characteristic $3$, so we need not consider either class of groups here. If $S = \operatorname{G}_2(q)$, by considering maximal subgroups discussed in \cite{Coo81} and \cite{Kle88} we obtain that Sylow $3$-subgroups are isomorphic to those of $\SL_3^{\epsilon}(q)$, where $\epsilon \in \{\pm 1\} $ such that $3|(q-\epsilon)$. In either case we have that any $P \in \Syl_3(\SL_3^{\epsilon}(q))$ is $2$-generated by Proposition \ref{theo:sl}.  If $S = \,^3\operatorname{D}_4(q)$, we have by \cite[Proposition 2.2]{DM87} that $P$ is an extension of a Sylow $3$-subgroup of $\SL_3^\epsilon(q)$ by an outer diagonal automorphism of order $3$, where $\epsilon \in \{\pm1\}$ such that  $3|(q-\epsilon)$. It is immediate that such a groups is not cyclic and we can explicitly find a generating set of order $2$; therefore, they must be $2$-generated. Next let $S = \,^2\operatorname{F}_4(q^2)'$ with $q^2 = 2^{2a+1}$. Then \cite[Main Theorem]{Mal90} 
implies that $S$ has Sylow $3$-subgroups isomorphic to those of $\operatorname{SU}_3(q^2)$, which are again $2$-generated by Proposition \ref{lem:sl}.

Lastly we consider the groups $\operatorname{F}_4(q),\operatorname{E}_6^{\epsilon}(q),\operatorname{E}_7(q),\operatorname{E}_8(q)$. The final paragraph of the proof of \cite[Proposition 5.17]{sf} gives the desired result.
\end{proof}

This concludes the discussion for simple groups, and we now shift our attention to almost simple groups.

\begin{proposition}\label{noncyclic}
     Let $S$ be a simple group of Lie type defined over a field of order $q$ with characteristic apart from $3$,
 and let $S\le A \le \Aut(S)$ such that $A/S$ has non-cyclic Sylow $3$-subgroups. Then $|P/\Phi(P)| >9 $.
 \end{proposition}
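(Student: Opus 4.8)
The plan is to show that a Sylow $3$-subgroup $P\in\Syl_3(A)$ is not $2$-generated, that is $|P:\Phi(P)|\ge 27$, which is the content of the stated inequality. I would fix $P$ inside the Sylow $3$-subgroup $\mathcal P=(P_T\rtimes P_W)\rtimes\langle\tau_1,\tau_2,F_{q_0}\rangle$ of $\Aut(S)$ from our setting, set $P_S:=P\cap S\in\Syl_3(S)$, and note that $Q:=P/P_S$ is isomorphic to a Sylow $3$-subgroup of $A/S$, hence non-cyclic. First I would record that $P_S\neq 1$: the only simple groups of Lie type in characteristic $\neq 3$ with $3\nmid|S|$ are the Suzuki groups ${}^2B_2(q^2)$, and these have cyclic outer automorphism group, so they are excluded by the hypothesis that $A/S$ has non-cyclic Sylow $3$-subgroups. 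Since $Q$ is non-cyclic, its minimal number of generators $d(Q)$ satisfies $d(Q)\ge 2$, and as $Q$ is a quotient of $P$ this already gives $d(P)\ge 2$; the whole problem is to produce one further independent generator coming from the non-trivial normal subgroup $P_S$.

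The second step is the structural observation that locates that extra generator. The group of diagonal automorphisms of $S$ is cyclic for every type (it is $C_{\gcd(n,q-\epsilon)}$ in type $A$, $C_{\gcd(3,q-\epsilon)}$ in type $E_6$, and a $2$-group otherwise), so its $3$-part is cyclic. Consequently, non-cyclicity of $Q$ forces the image of $P$ in the field--graph quotient $\Aut(S)/\tilde S$ to be non-trivial: $A$ must contain a field or graph automorphism of $3$-power order. Running through the ways a non-cyclic $Q$ can arise then leaves essentially three families: types $A$ and $E_6$, where $Q$ is built from a diagonal $C_3$ (sitting inside the inner part) together with a field automorphism, and type $D_4$, where $Q$ is built from the order-$3$ triality graph automorphism together with a field automorphism; the remaining types (and the already-twisted ${}^2E_6$, ${}^3D_4$) contribute only cyclic or trivial outer $3$-parts.

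The main computation is then to evaluate $P/\Phi(P)$ directly in each family. I would write $P=P_T'\rtimes R$, where $P_T'$ is the relevant (inner/diagonal) torus part and $R$ is generated by $P_W$ together with the outer $3$-elements, and compute $\Phi(P)=P'P^3$ by analysing $P_T'$ as a module for $R$. The point is that although the cyclic field (or triality) automorphism and the Weyl action each cut the torus part down, the coinvariants of $P_T'$ under the combined action retain a non-trivial $C_3$; this surviving $C_3$ is the third independent generator, so that $P^{\mathrm{ab}}$ maps onto $C_3^3$ and $|P:\Phi(P)|\ge 27$. Lemma \ref{idunno} and Lemma \ref{wreath}, together with the Weir-type description of $P_T\rtimes P_W$ underlying Propositions \ref{theo:gl}--\ref{theo:psl}, supply the module data needed. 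A representative instance is $S=\PSL_3(q)$ with $3\mid(q-1)$ and a field automorphism of order $3$: there $P_T'\cong C_{3^a}^2$, the Weyl element acts by the standard $2$-dimensional representation, the field automorphism acts as a scalar, and the coinvariants are exactly $C_3$, giving $|P:\Phi(P)|=27$.

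The step I expect to be the main obstacle is precisely this module computation, because the non-cyclicity is distributed between the normal inner part $P\cap\tilde S$ (the diagonal/torus contribution) and the outer field/graph part, so a naive application of Lemma \ref{idunno} with the field automorphism alone as a complement only yields $d(P)>1$ and is far too weak. One genuinely has to track how the field and graph automorphisms act on $P_T$ relative to the Weyl action in order to see that the relevant coinvariant group does not collapse into $\Phi(P)$. I anticipate the fiddliest sub-cases to be the triality action in type $D_4$ and the effect of the twisting $\epsilon=-1$ in the unitary groups, where the field automorphism and the graph/Frobenius twist interact; these may require separate verification, possibly leaning on the small-rank computations already recorded in Proposition \ref{gapstuff}.
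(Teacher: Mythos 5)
Your opening reductions are sound and match the paper's: since $P/(P\cap S)$ is non-cyclic you get $[P:\Phi(P)]\ge 9$ for free, and the observation that the diagonal automorphism group has cyclic $3$-part (so a field or graph automorphism of $3$-power order must be present) is exactly the structural input the paper also uses. But the heart of your argument --- that the coinvariants of $P_T'$ under the combined Weyl/field (or triality) action retain a nontrivial $C_3$ --- is asserted, not proved, and you yourself flag it as the main obstacle. As written this is a genuine gap: without the module computation in each of the families you list (type $A$, type $E_6$, type $D_4$, plus the twisted variants), nothing rules out the torus contribution collapsing entirely into $\Phi(P)=P'P^3$, and the case bookkeeping (e.g.\ when the Sylow $3$-subgroup of $A/S$ is generated by $\delta F$ together with another element, so that $P$ is not literally a semidirect product $P_T'\rtimes R$ with $R$ a complement you can name) is nontrivial.

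The gap is also unnecessary, because you are looking for the third generator in the wrong place. The paper finds it in $P_W$ rather than in the torus: the groups with $P_W=1$ (namely $\PSL_2(q)$, $\PSL_n^{\epsilon}(q)$ for $n\in\{3,4,5\}$ with $3\mid(q+\epsilon)$, and $\PSp_4(q)$) all have $\Aut(S)/S$ with cyclic Sylow $3$-subgroups, so the hypothesis forces $P_W\neq 1$. Since $P_W\le P\cap S$, it lies in every preimage $\pi^{-1}(M_i)$ of a maximal subgroup of $P/(P\cap S)$, hence in $\bigcap_i\pi^{-1}(M_i)$, which already has index $\ge 9$; on the other hand $(P_T\rtimes\hat A)\cap A$ is proper in $P$ (as $\langle (P_T\rtimes\hat A)\cap A,\,P_W\rangle=P$), so any maximal subgroup $M$ of $P$ containing it omits $P_W$. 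Thus $\Phi(P)\le M\cap\bigcap_i\pi^{-1}(M_i)<\bigcap_i\pi^{-1}(M_i)$ and $[P:\Phi(P)]>9$, with no coinvariant computation and no case analysis over Lie types. Your remark that ``a naive application of Lemma~\ref{idunno} with the field automorphism alone as a complement only yields $d(P)>1$'' is correct but misdirected: the extra index of $3$ comes from exhibiting one maximal subgroup of $P$ that separates $P_W$ from the Frattini intersection coming from the quotient $P/(P\cap S)$, not from the torus.
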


 \begin{proof}
      Let $A$ be such a group, let $Q\in \Syl_3(S)$ and let $P\in \Syl_3(A)$ such that $Q\le P$.  Let $\{M_1,..,M_k\}$ be the set of maximal subgroups of $P/Q$.  For any $M_i$  we have that the preimage of $M_i$ under the canonical projection map $\pi$ will be a maximal subgroup of $P$. By assumption we have $[P/Q:\Phi(P/Q)]\ge 9$, so $\bigcap\pi^{-1}(M_i)$ has index at least $9$ in $P$ and  $[P:\Phi(P)]\ge 9$. It is therefore sufficient to find a maximal subgroup of $P$ which does not contain this intersection. 
      
      Using  \cite[Theorem. 4.10.2]{GLS}  we see that there is a Sylow $3$-subgroup of $A$  contained in a group of  the form $(P_T\rtimes P_W)\rtimes \hat{A}$, where $P_T$ is a Sylow $3$-subgroup of a maximal torus in $T \le\tilde{S}$, $P_W$ is isomorphic to an element of $ \Syl_3(N_{\tilde S}(T)/T)$ and $\hat{A}$ is generated by field and graph automorphisms. We see further that any outer diagonal automorphisms in $P$ are contained in $P_T$.  This decomposition can be chosen such that the elements of $\hat A$ normalize $P_T$. Note that, from the structure description given in the preceding propositions, any $S$ with $P_W$ being trivial is one of $\PSL_2(q),\PSL^{\epsilon}_3(q)$ with $3|(q+\epsilon)$, $\PSL^{\epsilon}_4(q)$ with $3|(q+\epsilon)$, $\PSL^{\epsilon}_5(q)$ with $3|(q+\epsilon)$ or  $ \operatorname{PSp}_4(q)$. In all of these cases $\Aut(S)/S$ has cyclic Sylow $3$-subgroups, so we need not consider them here.  This gives that $(P_T\rtimes \hat A)\cap A$ is a proper subgroup of $A$. Let $M$ be a maximal subgroup of $P$ containing  $(P_T\rtimes \hat A)\cap A$. Then, $M$ does not contain $P_W$, since $\langle (P_T\rtimes \hat A)\cap A,P_W\rangle  = P$. Since $P_W\le Q$ we have  $P_W \le \bigcap\pi^{-1}(M_i)$.  This implies that $\bigcap\pi^{-1}(M_i)$ is not contained in $M$ as desired.
 \end{proof}

\begin{proposition}\label{2or3}
    Let $S = \PSL_2^{\epsilon}(q)$ or $S=\PSL^{\epsilon}_3(q)$, and further let $S \le A \le \Aut(S)$ such that $A/S$ has non-trivial, cyclic Sylow $3$-subgroups. Then $A$ has $2$-generated Sylow $3$-subgroups unless $S=\PSL^{\epsilon}_3(q)$, $3|(q-\epsilon)$, and $\langle F_0\rangle Q$ is a Sylow $3$-subgroup of $A$ for some $Q\in \Syl_3(S)$ and $F_0$ a field automorphism.
\end{proposition}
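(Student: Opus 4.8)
The plan is to reduce the whole statement to a single representation-theoretic invariant and then compute it. Fix $Q\in\Syl_3(S)$ and $P\in\Syl_3(A)$ with $Q\le P$; since $S\normal A$ we have $Q=P\cap S\normal P$, and $P/Q$ is the (cyclic, nontrivial) Sylow $3$-subgroup of $A/S$, generated by the image of some $g\in P$. The key observation is that for any $3$-group $P$ with normal subgroup $Q$ and cyclic quotient $P/Q=\langle gQ\rangle$ one has
$$
d(P)=1+\dim_{\F_3}\mathbf{C}_V(g),\qquad V:=Q/\Phi(Q),
$$
where $d(P)=\dim_{\F_3}P/\Phi(P)$ and $g$ acts on the $\F_3$-space $V$ by conjugation. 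Indeed, since $P/Q$ is abelian we have $P'\le Q$, and together with $\Phi(Q)\le\Phi(P)$ one checks via the modular law that $Q\cap\Phi(P)=\Phi(Q)\,[Q,g]$; hence the image of $Q$ in $P/\Phi(P)$ is the coinvariant space $V/(g-1)V$, whose dimension equals $\dim\mathbf{C}_V(g)$, while $P/Q\Phi(P)\cong\F_3$ contributes the one remaining generator. Thus $A$ has $2$-generated Sylow $3$-subgroups exactly when $\dim\mathbf{C}_V(g)=1$, and everything comes down to computing the action of $g$ on $V$.

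Next I would dispatch the cases in which $Q$ is cyclic. The Sylow $3$-subgroups of $\PSL_2^{\epsilon}(q)$ are cyclic, and for $\PSL_3^{\epsilon}(q)$ with $3\mid(q+\epsilon)$ Proposition \ref{theo:psl} gives that $Q$ is cyclic as well. In either case $V$ is one-dimensional over $\F_3$, so the $3$-element $g$, whose image in $\GL_1(\F_3)=C_2$ is necessarily trivial, acts trivially; therefore $\dim\mathbf{C}_V(g)=1$ and $d(P)=2$. Hence these groups always have $2$-generated Sylow $3$-subgroups and never meet the exception, consistent with the fact that here the diagonal automorphisms have $3'$-order, so every $3$-element of $\operatorname{Out}(S)$ is, up to inner automorphisms, a field automorphism.

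The substantive case is $S=\PSL_3^{\epsilon}(q)$ with $3\mid(q-\epsilon)$, where Proposition \ref{theo:psl} gives $|V|=9$. Writing a Sylow $3$-subgroup of $\SL_3^{\epsilon}(q)$ as $T_3\rtimes\langle\sigma\rangle$ with $T_3$ the $3$-part of the diagonal torus and $\sigma$ a $3$-cycle permutation matrix, and passing to the central quotient $Q$, one checks that $V\cong\big((T_3/T_3^3)/L\big)\oplus\langle\bar\sigma\rangle$, where $L=(\sigma-1)(T_3/T_3^3)$ is the one-dimensional line spanned by the image of the diagonal. By \cite[Thm.~4.10.2]{GLS} the Sylow $3$-subgroup of $\operatorname{Out}(S)$ is $C_3\times C_{3^b}$, the direct product of the diagonal automorphism group $C_3$ (direct because $\Aut(C_3)$ is a $3'$-group, so field automorphisms centralize it) with the $3$-part $C_{3^b}$ of the field automorphisms; thus the image of $g$ is $d^{i}F_0^{j}$ for a diagonal automorphism $d$ and field automorphism $F_0$. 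I would then compute the two actions on $V$ separately: a field automorphism acts on $T_3$ as a power map $x\mapsto x^{m}$ with $m\equiv1\pmod 3$ --- namely $m=q_0$ when $\epsilon=1$ and $m=q_0^{2}$ when $\epsilon=-1$, where $q=q_0^{3^b}$ and $q_0\equiv\epsilon\pmod 3$ --- hence acts as the identity on $T_3/T_3^3$, and it fixes the permutation matrix $\sigma$, so it acts trivially on all of $V$; whereas a diagonal automorphism centralizes $T_3$ but sends $\sigma\mapsto\sigma t$ for a torus element $t$ whose image in $(T_3/T_3^3)/L$ is nonzero, so it acts as a nontrivial transvection on $V$.

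Assembling these gives the dichotomy. If the diagonal part is nontrivial ($i\not\equiv0\pmod 3$), then $g$ acts as a nonzero power of the transvection, so $\dim\mathbf{C}_V(g)=1$ and $d(P)=2$ (this includes the pure-diagonal case $P\in\Syl_3(\tilde S)$, which one may alternatively see directly from Proposition \ref{theo:gl} and Lemma \ref{idunno}). If instead $g$ is a power of a pure field automorphism, then $g$ acts trivially, $\dim\mathbf{C}_V(g)=2$, and $d(P)=3$, so $P=\langle F_0\rangle Q$ is not $2$-generated --- precisely the stated exception; note it is the cyclicity of $P/Q$ that forbids the diagonal and field contributions from occurring independently, forcing the exceptional Sylow subgroup to have exactly the form $\langle F_0\rangle Q$. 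The main obstacle I anticipate is the explicit determination of these two actions on $V$: correctly accounting for the unitary field automorphism (which raises entries to the $q_0^2$ power, keeping the exponent $\equiv1\pmod 3$) and verifying that the diagonal automorphism moves $\bar\sigma$ by a torus element lying outside $L$.
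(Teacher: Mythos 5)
Your overall strategy---reducing everything to the action of a generator of $P/Q$ on $V=Q/\Phi(Q)$---is appealing, and your case-by-case computation of that action (trivial on $V$ for a field automorphism, a nontrivial transvection for a diagonal automorphism) is essentially correct and does reproduce the dichotomy in the statement. The problem is that the ``key observation'' on which the whole reduction rests is false as stated. Take $P=C_9$ and $Q=C_3$ its index-$3$ subgroup: $P/Q$ is cyclic, $V=Q$ is one-dimensional, $g$ acts trivially, so your formula predicts $d(P)=2$, yet $P$ is cyclic. The failure is precisely in the claimed identity $Q\cap\Phi(P)=\Phi(Q)[Q,g]$: since $\Phi(P)=P'P^3$, cubes of elements of $P\setminus Q$ may land in $Q$ outside $\Phi(Q)[Q,g]$ (in the example $\Phi(P)=Q$ while $\Phi(Q)[Q,g]=1$). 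The formula $d(P)=1+\dim_{\F_3}\mathbf{C}_V(g)$ is only valid under an extra hypothesis, for instance that the generator $g$ of $P$ over $Q$ can be chosen with $g^{|P/Q|}\in\Phi(Q)[Q,P]$---in particular when $P=Q\rtimes\langle g\rangle$ genuinely splits. This hypothesis does hold in every case you need (for field automorphisms it is part of the decomposition in \cite[Thm. 4.10.2]{GLS}; for the diagonal and mixed cases one can exhibit an order-$3$ complement, e.g.\ the image of $\mathrm{diag}(a,1,1)\sigma$ with $\sigma$ the $3$-cycle permutation matrix), but you must state and verify it case by case, since without it the central lemma is simply wrong and the count $d(P)$ could drop.

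It is worth comparing with the paper's route, which sidesteps this machinery: when $Q$ is cyclic it writes $P=Q\rtimes\langle F\rangle$ and is done; in the pure diagonal case it observes that $P$ is a non-cyclic quotient of the $2$-generated Sylow $3$-subgroup of $\GL_3^{\epsilon}(q)$ from Proposition \ref{theo:gl}; in the $\delta F_0$ case it writes down the explicit generating pair $\{\mathrm{diag}(a,a^{-1},1),\delta F_0\}$; and in the exceptional field-automorphism case it applies Lemma \ref{idunno} to $P\cong P_T\rtimes(P_W\rtimes\langle F_0\rangle)$ to get $|P/\Phi(P)|>9$. Note that for the $2$-generation claims one only needs an \emph{upper} bound on $d(P)$, which quotient and explicit-generator arguments give cheaply, while for the exception one only needs a \emph{lower} bound, which Lemma \ref{idunno} already supplies; your coinvariant computation is doing more work than the proposition requires, and that extra precision is exactly where the unjustified splitting assumption enters.
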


\begin{proof}
    Let $Q \in \Syl_3(S)$ and $ P \in \Syl_3(A)$ with $Q \le  P$. First, consider the case where $S=\PSL^{\epsilon}_2(q)$ or $3|(q+\epsilon)$ and $S= \PSL^{\epsilon}_3(q)$. In this case we can see explicitly that $Q$ is cyclic. Since $3\nmid |\tilde S/S|$, we can choose $P$ and $Q$ such that  $ P = Q \rtimes \langle F_0\rangle$, where $F_0$ is a field automorphism. From this it is clear that $ P$ is $2$-generated. 

     Now let $S=\PSL^{\epsilon}_3(q)$ with $3|(q-\epsilon)$. First assume a Sylow $3$-subgroup of $A/S$ is generated by an outer diagonal automorphism or the product of a field automorphism with an outer diagonal automorphism. Since $Q \le  P$ is not cyclic, $ P$ cannot be cyclic. In the case that $A/S$ has a Sylow $3$-subgroup generated by an outer diagonal automorphism we see that $P\in \syl_3(\operatorname{PGL}_3^\epsilon(q))$ and is a homomorphic image a Sylow $3$ subgroup of $\GL_3^\epsilon(q)$, which is $2$-generated by \ref{theo:gl}, and must therefore be $2$-generated. If instead $A/S$ is generated by the product of a field automorphism $F_0$ with an outer diagonal automorphism $\delta$ we let  $a$ be a generator of a Sylow $3$-subgroup of $C_{q-\epsilon}\le \mathbb{F}_{q^2}^\times$ and note that $\{\mathrm{diag}(a,a^{-1},1),\delta F_0\}$ generates $P$. Thus, $P$ must be $2$-generated.

    Lastly, we consider the case where  a  Sylow $3$-subgroup of $A/S$ is generated by  a field automorphism. As in the preceding proposition, we have that  $P$ can be decomposed as $ (P_T\rtimes P_W)\rtimes \langle F_0\rangle \cong P_T\rtimes (P_W\rtimes \langle F_0\rangle)  $, where $F_0$ is a field automorphism. Applying Lemma \ref{idunno} to the subgroups $P_T$  and $P_W\rtimes \langle F_0\rangle$ gives the desired result. 
 \end{proof}

\begin{proposition}\label{alm} Let $S$ be a simple group of Lie type defined over a field of order $q$ with characteristic apart from $3$.
Assume $S\neq \PSL_2(q)$ and $S \neq \PSL^{\epsilon}_3(q) $. Let $S\le A \le \Aut(S)$ such that $A/S$ has a non-trivial Sylow 3-subgroup. Then $|P/\Phi(P)| >9 $ for all $P \in \Syl_3(A)$.

\end{proposition}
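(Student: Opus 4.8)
The plan is to reduce to the case of cyclic Sylow $3$-subgroups of $A/S$ and then exploit the decomposition $P=(P_T\rtimes P_W)\rtimes\langle x\rangle$ of Proposition \ref{noncyclic} together with Lemma \ref{idunno}. First, by Proposition \ref{noncyclic} I may assume that the Sylow $3$-subgroup of $A/S$ is non-trivial and cyclic, generated by the image of an element $x$; using \cite[Thm. 4.10.2]{GLS} I write $P\in\Syl_3(A)$ as $(P_T\rtimes P_W)\rtimes\langle x\rangle$, where $x$ is the $3$-part of a field or graph automorphism and is trivial precisely when the outer $3$-part of $A/S$ is diagonal. Throughout I use that $3\mid(q^2-1)$ in characteristic $\neq 3$, so that the relevant maximal torus carries $3$-elements and $P_T\neq 1$.

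The main division is according to the type of the cyclic outer $3$-part. If it is diagonal (so $x=1$), then $P=\Syl_3(\tilde S)$; this forces $3\mid\gcd(n,q-\epsilon)$ for $S=\PSL_n^\epsilon(q)$ (whence $3\mid n$ and $n\ge 6$) or $S$ of type $E_6$, cases in which $S$ is already far from $2$-generated. Here I would compute $|P/\Phi(P)|$ directly from the wreath-product description underlying Propositions \ref{theo:gl}--\ref{theo:psl} (losing at most a factor of $3$ on passing from $\GL$ to its central quotient, as in Lemma \ref{lem:psl}) to obtain $|P/\Phi(P)|\ge 27$. If instead $x\neq 1$ and $P_W\neq 1$, then, writing $P=P_T\rtimes(P_W\rtimes\langle x\rangle)$ and using that $x$ normalizes $P_T$, two successive applications of Lemma \ref{idunno} give $|P/\Phi(P)|>|P_W\rtimes\langle x\rangle:\Phi(P_W\rtimes\langle x\rangle)|>|\langle x\rangle:\Phi(\langle x\rangle)|=3$, hence $|P/\Phi(P)|>9$. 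Inspecting Proposition \ref{theo:classical} shows this generic case covers $G_2(q)$, ${}^3D_4(q)$, ${}^2F_4(q^2)'$, $\operatorname{P\Omega}_8^-(q)$, and all non-$2$-generated socles, since in each of these the Weyl contribution $P_W$ is non-trivial.

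The remaining, and hardest, case is $P_W=1$ with $x=F_0$ a field automorphism of $3$-power order, which by the discussion preceding Proposition \ref{noncyclic} leaves exactly $S\in\{\PSp_4(q),\PSL_4^\epsilon(q),\PSL_5^\epsilon(q)\}$ with $3\mid(q+\epsilon)$. Here $P=P_T\rtimes\langle F_0\rangle$ with $P_T$ abelian of rank $2$, so Lemma \ref{idunno} alone yields only $|P/\Phi(P)|\ge 9$ and I must rule out equality. I expect this to be the crux. Writing $q=q_0^3$ with $3\mid(q_0+\epsilon)$ and using that the relevant torus is twisted, so that $F_0$ acts on $P_T\cong C_{3^b}\times C_{3^b}$ as $t\mapsto t^{\pm q_0}$ with $\pm q_0\equiv 1\pmod 3$, I would show that $F_0$ centralizes $P_T/\Phi(P_T)\cong\mathbb{F}_3^2$. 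Consequently $[P_T,F_0]\le\Phi(P_T)\le\Phi(P)$ and $P/\Phi(P)\cong\mathbb{F}_3^3$, giving $|P/\Phi(P)|=27>9$.

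The delicate point, and the one I expect to be the main obstacle, is the sign bookkeeping in the action of $F_0$ on the twisted torus: one must verify that $F_0$ acts as a scalar congruent to $1$ modulo $3$ on the Frattini quotient of $P_T$, so that it contributes a genuine third independent generator rather than being absorbed into $\Phi(P)$. It is worth emphasizing that $F_0$ need not have any non-trivial fixed points on $P_T$ (in fact its fixed subgroup can coincide with $\Phi(P_T)$); acting trivially on $P_T/\Phi(P_T)$ is the weaker and correct statement, and establishing it uniformly across the three families above is where the real work lies.
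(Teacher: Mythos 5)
Your overall strategy coincides with the paper's: reduce to a cyclic outer $3$-part via Proposition \ref{noncyclic}, exploit the decomposition $(P_T\rtimes P_W)\rtimes\langle x\rangle$ from \cite[Thm. 4.10.2]{GLS} together with Lemma \ref{idunno}, and treat the three socles with $P_W=1$ (namely $\PSp_4(q)$ and $\PSL_n^{\epsilon}(q)$ for $n\in\{4,5\}$ with $3\mid(q+\epsilon)$) by showing that a field automorphism of $3$-power order acts trivially on $P_T/\Phi(P_T)$, because it acts coordinatewise on $C_{3^a}\times C_{3^a}$ and a $3$-element of $\Aut(C_{3^a})$ is congruent to the identity modulo $3$. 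That last case, which you rightly identify as the crux, is handled in the paper by exactly the argument you sketch, and your second case ($x\neq 1$, $P_W\neq 1$) likewise matches the paper, which uses $Q_W\times\langle\varphi\rangle$ as a non-cyclic complement and one application of Lemma \ref{idunno}.

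There are, however, two genuine gaps in your case analysis. First, in the diagonal case your bound (``take $|\widetilde P/\Phi(\widetilde P)|$ for $\GL_n^{\epsilon}(q)$ and divide by $3$'') is not enough: for $n=9$ one has $|\widetilde P/\Phi(\widetilde P)|=27$, so you obtain only $\geq 9$, not $>9$; the correct argument (used in Proposition \ref{theo:psl} and in the paper's proof here) writes $P=P_T\rtimes P_W$ with $|P_W/\Phi(P_W)|\geq 9$ for $n\in\{6,9,27\}$ and invokes Lemma \ref{idunno}. Moreover, ``$S$ is already far from $2$-generated'' proves nothing about $A$, since the Frattini quotient of a Sylow subgroup of an overgroup can be smaller than that of a Sylow subgroup of $S$, and the wreath-product description of Propositions \ref{theo:gl}--\ref{theo:psl} says nothing about $\operatorname{E}_6^{\epsilon}(q)$, whose diagonal case the paper settles by citing \cite[Table 3.2]{GM20} to see that $P_W$ is non-cyclic. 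Second, your trichotomy omits the mixed case $P=Q\langle\delta F_{q_0}\rangle$ with $\delta$ a non-trivial diagonal automorphism and $F_{q_0}$ a field automorphism: here $P$ contains neither the full diagonal $3$-part nor $\langle F_{q_0}\rangle$ separately, so $P$ is not of the form $(P_T\rtimes P_W)\rtimes\langle x\rangle$ with $x$ a field automorphism, and it is not the purely diagonal case either. The paper treats it separately, writing $P=((\tilde Q_T\rtimes\langle F_{q_0}\rangle)\cap P)\rtimes Q_W$ with $|Q_W/\Phi(Q_W)|\geq 9$ (and, for $\operatorname{E}_6^{\epsilon}(q)$, comparing $\Phi(P)$ with $\Phi(\hat P)$ for an overgroup $\hat P$ of index at most $3$). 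Both gaps are repairable with tools you already use, but as written the proof does not close.
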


\begin{proof}
 Let $Q \in \Syl_3(S)$ and $P \in \Syl_3(A)$. Then, $ Q$ is of the form $Q_T\rtimes Q_W$, where $Q_T$ and $Q_W $ are as discussed in the setting. Using Proposition \ref{noncyclic} we need only consider cases where $A/S$ has cyclic Sylow $3$-subgroups.

First, consider the case in which $P = Q \rtimes\langle\varphi\rangle$, where $\varphi = F_{q_0}$ is a field automorphism of $3$-power order, or $S = \operatorname{D}_4(q)$ and $\varphi \in \{\tau F_{q_0},\tau\}$ and  $\tau$ is  a triality graph automorphism (note that the argument in \cite[Proposition 5.18]{sf} applies to this and the following case, but we include a proof for completeness). We can choose the decomposition $Q = Q_T\rtimes Q_W$ such that   $\varphi$ centralizes $Q_W$. In all cases aside from $S =\operatorname{PSp}_4(q)$, $S=\PSL^{\epsilon}_4(q)$ with $3|(q+\epsilon)$, and $S=\PSL^{\epsilon}_5(q)$ with $3|(q+\epsilon)$  we have that $Q_W$ is non-trivial and we have that $Q_W\times \langle\varphi\rangle$ is a non-cyclic subgroup of $P$. Thus, if we take $S$ to be not one of these exceptions, applying Lemma \ref{idunno} gives the desired result. Further note that, in the case of $S= \operatorname{E}_6^{\epsilon}(q)$,  from \cite[Table 3.2]{GM20} we see that the possibilities for relative Weyl groups of minimal  $1$-split and minimal  $2$-split Levi subgroups are the Shephard--Todd Groups $\operatorname{G}_{28}$ and $\operatorname{G}_{35}$.  We verify that these groups have non-cyclic Sylow $3$-subgroups using their implementation in  \cite{BCP97}. Thus, since $3 |(q\pm1)$, we see that \cite[Theorem 4.10.2]{GLS} gives that $Q_W$ is isomorphic to to Sylow $3$-subgroup of the relative Weyl group of a minimal  $1$-split or minimal  $2$-split Levi subgroup. This gives that $Q_W$ is non-cyclic. Thus, we have $Q_W\times \langle\varphi\rangle $ being at least $3$-generated and $P$ being at least $4$-generated by Lemma \ref{idunno}.

We now consider the three exceptions listed in the previous paragraph. From the proofs of Proposition \ref{theo:classical}, Proposition \ref{theo:gl} and Proposition \ref{theo:psl} we see that $Q \cong C_{3^a}\times C_{3^a} $, where $3^a = (q^2-1)_3$ and  $P \cong Q \rtimes \langle F_{q_0}\rangle$. Let $M$ be a maximal subgroup of $P$. Then, if $M\cap Q$ is proper, it is maximal in $Q$, since the diamond isomorphism theorem gives $[Q:M\cap Q] = [P:M] = 3$. Similarly we have $M\cap \langle F_{q_0}\rangle$ is either non-proper or maximal in $\langle F_{q_0}\rangle$. Thus, it is sufficient to consider the size of a minimal generating set  of $Q/\Phi(Q) \rtimes (\langle F_{q_0}\rangle/\Phi(\langle F_{q_0}\rangle)) \cong (C_3\times C_3)\rtimes C_3$. Since the field automorphism acts coordinate by coordinate on the $Q_T\cong C_{3^a}\times C_{3^a}$, it follows that  $P/\Phi(P) \cong Q/\Phi(Q) \rtimes (\langle F_{q_0}\rangle/\Phi(\langle F_{q_0}\rangle))$ is isomorphic to $C_3\times C_3\times C_3$ and $P$ is  $3$-generated.

 Now let $S = \PSL^{\epsilon}_n(q)$ and $P = Q \langle \delta\rangle$, where $\delta$ is an outer diagonal automorphism of $3$-power order. We then have $3|\mathrm{gcd}(n,q-\epsilon)$. 
 If $\pi:\GL^{\epsilon}_n(q)\rightarrow \operatorname{PGL}^{\epsilon}_n(q)$ is the usual projection map, we then have that $P$ is the image under $\pi$ of a Sylow $3$-subgroup of some group $M$ with $\SL^{\epsilon}_n(q)\le M\le \GL^{\epsilon}_n(q)$. If $\hat P \in \Syl_3(\GL^{\epsilon}_n(q))$ and $\hat{Q} \in \Syl_3(M)$,  then, arguing as in Lemma \ref{lem:sl}, we see that $[\hat P : \Phi(\hat{P})]/[\hat{Q}:\Phi(\hat{Q})] \le 3$ and $[\hat Q : \Phi(\hat{Q})]/[Q:\Phi(Q)] \le 3$.
     By Proposition \ref{theo:gl} we then ascertain that the only choices of $n$ that could allow for $2$-generated Sylow subgroups are $n \in \{3,6,9,27\}$. The case of $n = 3 $ was discussed in Proposition \ref{2or3}, so we need only consider $n \in \{ 6,9,27\}$. As in the above cases we have $P$ can be decomposed as $P_T\rtimes P_W$ and further we have that $P_W$ is isomorphic to  $ C_3\times C_3,  C_3\wr C_3$ and $ C_3\wr C_3\wr C_3$ for $n=6,n=9$ and $n=27$ respectively. Since $|P_W/\Phi(P_W)|\ge 9$ in all  cases, applying Lemma \ref{idunno} gives the desired result.

 Now we consider the case where $S = \PSL^{\epsilon}_n(q)$ and $P = Q  \langle \delta F_{q_0}\rangle$, where $\delta$ is a non-trivial outer diagonal automorphism of $3$-power order and $F_{q_0}$ is a field automorphism.   Then $P \le \hat P := \tilde Q_T\rtimes Q_W \rtimes \langle F_{q_0} \rangle $, where $\tilde Q_T\rtimes Q_W$ is a Sylow $3$-subgroup of $\tilde{S}$. We can choose this decomposition such that $F_{q_0}$ commutes with $Q_W$, so we have that $\hat{P} \cong \tilde Q_T\rtimes \langle F_{q_0} \rangle \rtimes Q_W $, and it follows that $P = ((\tilde Q_T\rtimes \langle F_{q_0} \rangle)\cap P)\rtimes Q_W $, since $P_W\le P$. Since $n > 3$  and $3|n$, we have $|Q_W/\Phi(Q_W)|\ge 9$ and Lemma \ref{idunno} gives the result.

 The last case to consider is where $S = \operatorname{E}^{\epsilon}_6(q)$ and $P$ is not of the form $Q \rtimes \langle F_{q_0}\rangle$ with $F_{q_0} $ a field automorphism. We have that either $P = Q  \langle \delta F_{q_0}\rangle$ for some field automorphism $F_{q_0}$ or $P= Q  \langle \delta\rangle$ with $\delta$  an outer diagonal automorphism. In the first case we have  $P = Q \rtimes \langle \delta F_{q_0}\rangle$ is a subgroup of the group  $\hat P :=\tilde Q\rtimes \langle F_{q_0}\rangle$, where $\tilde{Q}$ is a Sylow $3$-subgroup of $\tilde{S}$. Note that $[\hat P: P] \le 3$ and $\Phi(\hat P) \subseteq P $. Further, note that, for any $ M \le \hat P$ maximal with $M \neq P$, we have $M \cap P $ is a maximal subgroup of $P$. Thus, $\Phi(P) \le \Phi(\hat P)$. Therefore, we have

 $$
 [\hat P:\Phi(\hat P)] \le 3[P: \Phi(\hat P)] \le 3[P:\Phi(P)]
 $$

From the second paragraph of this proof we have $|\hat P:\Phi(\hat P)| \ge 81$, so this gives $|P: \Phi( P)|\ge 27$ as desired.  In the latter case $P$ is a Sylow $3$-subgroups of $\tilde{S}$ and  we can again obtain a decomposition of the form $ P =P_T \rtimes P_W $. We see that it is sufficient to show that  $P_W$ has a minimal generating set of size at least $2$, since this gives that $P$ is not $2$-generated by Lemma \ref{idunno}. Arguing as in the second paragraph of this proof gives $P_W$ having minimal generating set of size at least 2.
 \end{proof}

 We conclude this section by compiling a list of all almost simple groups with $2$-generated Sylow $3$-subgroups. 
 \begin{corollary}
     Let $S$ be a simple group and  $S\le A \le \Aut(S)$.

 \begin{enumerate}
\item[(I)] If $3 \nmid |A/S|$ then $A$ has $2$-generated Sylow $3$-subgroups if and only of $S$ is isomorphic to  one of the following:

 \begin{itemize}
 \item $\PSL_3(3)$
    \item $\PSL^{\epsilon}_n(q)$ with   $n\in \{3,4\}$ and $3|(q-\epsilon)$
    \item $\PSL^{\epsilon}_n(q)$ with $n\in \{4,5,6,7\}$ and $3|(q+\epsilon)$
    \item $\PSp_{2n}(q)$  with $n=2$ or $n=3$
    \item $\operatorname{P}\Omega_{2n+1}(q)$  with $n=2$ or $n=3$
     \item $\operatorname{P\Omega}_8^-(q)$
    \item $\operatorname{G}_2(q),\,^3\operatorname{D}_4(q)$ or $\,^2\operatorname{F}_4(q^2)'$(including  $\,^2\operatorname{F}(2)')$ 
    \item one of the alternating groups $A_n$ for $6\le n \le 11 $
    \item $\operatorname{M11},\operatorname{M12},\operatorname{M22},\operatorname{M23},\operatorname{M24}$
    \item $\operatorname{J2},\operatorname{J3},\operatorname{J4}$
    \item $\operatorname{Hs},\operatorname{Ru},\operatorname{He}$
\end{itemize}
\item[(II)] If $3$ divides $|A/S|$ then $A$ has $2$-generated Sylow $3$-subgroups if and only of $S$ is isomorphic to one of the following:
\begin{itemize}
    \item $\PSL_2(q)$ with $3\nmid q$
    \item $\PSL_3(q)$ with $3 |(q+\epsilon)$ or  $3|(q-\epsilon)$ and  $\langle F_0\rangle Q$ is not Sylow $3$-subgroup of $A$ for any $Q\in \Syl_3(S)$ and field automorphism $F_0$.
\end{itemize}
\end{enumerate}
\end{corollary}

    \section{Theorem A for groups of Lie type}
We have concluded our classification of  almost simple groups with socle a simple group of Lie type that have $2$-generated Sylow $3$-subgroups, and will now classify all almost simple groups $A$, with socle a simple group of Lie type,  such that $k_{0,\sigma}(B_0(A))\in \{6,9\}$. This discussion will rely heavily on Deligne–Lusztig theory. Important details of this theory will be discussed here and we refer the reader to \cite[Sec. 2.6]{GM20} for more detailed discussion. Let $G$ be a group of Lie type as defined in the opening paragraph of Section 3 and let $G^* = (\mathbf{G}^*)^F$, where $\mathbf{G^*}$ is in duality with $\mathbf{G}$ (see for example \cite[Def. 1.5.17]{GM20}). Let $\{s_1,...,s_k\}$ be a full set of conjugacy class representatives of semisimple elements of $G^*$. Then the irreducible characters of $G$ are partitioned into sets $\mathcal{E}(G,s_1),...,\mathcal{E}(G,s_k)$, which are referred to as rational Lusztig series. The elements of $\mathcal{E}(G,1)$ are referred to as unipotent characters, and the elements of $\mathcal{E}(G,s_i)$ are in bijection with the unipotent characters of $\mathbf{C}_{G^*}(s_i)$. If $G$ is defined over a field of characteristic $p$, then, for $\chi \in \mathcal{E}(G,s)$ corresponding to $\theta \in \mathcal{E}(\mathbf{C}_{G^*}(s),1)$, we have that $\chi(1) = [G^{*}:\mathbf{C}_{G^{*}}(s)]_{p'}\theta(1)$. If $i:\mathbf{G}\rightarrow \tilde{\mathbf{G}}$ is a regular embedding (see \cite[Section 1.7]{GM20}), $\tilde{G} = \tilde{\mathbf{G}}^F$ and $\chi \in \Irr(\tilde{G})$, we have that the number of irreducible constituents of $\chi_G$ divides $[\mathbf{C}_{\mathbf{G}^{*}}(s):\mathbf{C}^{\circ}_{\mathbf{G}^{*}}(s)]$, where $\mathbf{C}^{\circ}_{\mathbf{G}^{*}}(s)$ denotes the connected component of the identity in $\mathbf{C}_{\mathbf{G}^{*}}(s)$. We note that the principal block is contained in the union of all $\mathcal{E}(G,s)$  such that $s$ has $3$-power order by \cite[Theorem 9.12]{CE04}. Lastly we mention that, given a Galois automorphism $\sigma_0\in \mathrm{Gal}(\Q(\xi_{|G|})/\Q)$, where $\xi_{|G|}$ is a root of unity with $|\xi_{|G|}|=|G|$ and $\sigma_0(\xi_{|G|}) = \xi_{|G|}^r $, we have $\sigma(\mathcal{E}(G,s)) = \mathcal{E}(G,s^r)$ (see \cite[Proposition 3.3.15]{GM20}), so, in particular, if the conjugacy class of  $s^r \in G^*$ is not the same as that of $s$, then none of the elements of $\mathcal{E}(G,s)$ are $\sigma$-fixed.

We now begin our discussion with some useful lemmas.

\begin{lemma}\label{tilde}
Keep the setting above. Let $\ell\neq p$  be a non-defining prime dividing $|S|$ but not
dividing $|\tilde{S}/S|$. Then restriction provides a bijection between $\Irr_{0,\sigma}(B_0(\tilde{S}))$ and $\Irr_{0,\sigma}(B_0(S))$.
\end{lemma}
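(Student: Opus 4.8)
The plan is to obtain the statement directly from the Alperin--Dade theorem (Theorem~\ref{ald}) applied to $S \normal \tilde{S}$. Note that, since the notation $\Irr_{0,\sigma}(B_0(-))$ refers to the principal $3$-block, the relevant non-defining prime is $\ell = 3$, and the hypothesis reads $3 \nmid |\tilde{S}/S|$. Thus the index condition of Theorem~\ref{ald} holds, and everything reduces to verifying the single remaining hypothesis $\tilde{S} = S\,\mathbf{C}_{\tilde{S}}(P)$ for $P \in \Syl_3(\tilde{S})$; once this is in hand, restriction is the asserted bijection.

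To reduce the problem, observe that because $3 \nmid |\tilde{S}/S|$ any $P \in \Syl_3(\tilde{S})$ already lies in $S$, so $P \in \Syl_3(S)$ and the Frattini argument gives $\tilde{S} = S\,N_{\tilde{S}}(P)$. Hence it suffices to show that $\mathbf{C}_{\tilde{S}}(P)$ surjects onto $\tilde{S}/S$. I stress that this is \emph{not} a formal consequence of the coprimeness of $|\tilde{S}/S|$ alone (it already fails for $C_3 \normal S_3$): the content is to produce enough elements centralizing $P$ inside the group of diagonal automorphisms, which I would do via a maximal torus.

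To exhibit such elements, write $P = P_T \rtimes P_W$ as in \cite[Thm.~4.10.2]{GLS}, with $P_T \in \Syl_3(\tilde{T})$ for an $F$-stable maximal torus $\tilde{T} = \mathbf{T}^F$ of $\tilde{S} = \mathbf{G}_{ad}^F$ and $P_W$ a complement. The key structural input is that $\tilde{S} = S\,\tilde{T}$ for this torus: the Lang sequence associated to $1 \to \mathbf{Z}(\mathbf{G}_{sc}) \to \mathbf{G}_{sc} \to \mathbf{G}_{ad} \to 1$ identifies $\tilde{S}/S$ with $H^1(F, \mathbf{Z}(\mathbf{G}_{sc}))$, and since $\mathbf{Z}(\mathbf{G}_{sc})$ lies in every maximal torus this cohomology group is already realized inside $\tilde{T}$, so the composite $\tilde{T} \hookrightarrow \tilde{S} \to \tilde{S}/S$ is surjective. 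Writing $\tilde{T} = \tilde{T}_3 \times \tilde{T}_{3'}$ we have $\tilde{T}_3 = P_T \le S$, and consequently $\tilde{S}/S \cong \tilde{T}/(\tilde{T}\cap S) \cong \tilde{T}_{3'}/(\tilde{T}_{3'}\cap S)$, a group of order coprime to $3$.

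Finally I would finish by coprime action. The subgroup $\tilde{T}_{3'}^{P_W}$ centralizes $P_T$ (as $\tilde{T}$ is abelian) and commutes with $P_W$ (being $P_W$-fixed), so $\tilde{T}_{3'}^{P_W} \le \mathbf{C}_{\tilde{S}}(P)$. Since $\tilde{S}/S$ is abelian we have $[\tilde{S},\tilde{S}] \le S$, so $\tilde{S}$, and in particular $P_W$, acts trivially by conjugation on $\tilde{T}_{3'}/(\tilde{T}_{3'}\cap S) \cong \tilde{S}/S$; and as $P_W$ is a $3$-group acting coprimely on the $3'$-group $\tilde{T}_{3'}$, the vanishing $H^1(P_W, \tilde{T}_{3'}\cap S) = 0$ shows that $\tilde{T}_{3'}^{P_W}$ surjects onto $(\tilde{T}_{3'}/(\tilde{T}_{3'}\cap S))^{P_W} = \tilde{T}_{3'}/(\tilde{T}_{3'}\cap S) \cong \tilde{S}/S$. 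Thus $\mathbf{C}_{\tilde{S}}(P)$ covers $\tilde{S}/S$ and $\tilde{S} = S\,\mathbf{C}_{\tilde{S}}(P)$, completing the verification. I expect the main obstacle to be making the torus claim $\tilde{S} = S\,\tilde{T}$ airtight for the specific torus underlying the Sylow decomposition rather than for a maximally split one; I would secure it through the torus-independent $H^1(F,\mathbf{Z}(\mathbf{G}_{sc}))$ computation above, together with a check that no exceptional isogeny or centre issue disturbs the identification $S = O^{3'}(\tilde{S})$ for the groups $S$ at hand.
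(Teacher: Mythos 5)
Your proposal is correct, but it takes a different route from the paper. The paper disposes of this lemma in two lines: it quotes \cite[Lemma 5.1]{sf} for the fact that restriction is a bijection between $\Irr(B_0(\tilde{S}))$ and $\Irr(B_0(S))$, and then \cite[Lemma 3.5]{mmsv} to transfer the height-zero and $\sigma$-fixed conditions. You instead re-derive the group-theoretic input underlying that citation: you reduce to verifying $\tilde{S} = S\,\mathbf{C}_{\tilde{S}}(P)$ so that Theorem~\ref{ald} applies, and you establish this via the decomposition $P = P_T \rtimes P_W$ of \cite[Thm.\ 4.10.2]{GLS}, the identification $\tilde{S}/S \cong H^1(F,\mathbf{Z}(\mathbf{G}_{sc}))$ showing $\tilde{S} = S\tilde{T}$ for the relevant maximal torus, and a coprime-action argument to lift $\tilde{S}/S$ into $\tilde{T}_{3'}^{P_W} \le \mathbf{C}_{\tilde{S}}(P)$. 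Each step checks out: $P \le S$ since $3 \nmid |\tilde{S}/S|$; the surjectivity of $\tilde{T} \to \tilde{S}/S$ is torus-independent because $\mathbf{Z}(\mathbf{G}_{sc})$ lies in every maximal torus and $H^1(F,\mathbf{T}_{sc})$ vanishes by Lang; the action of $P_W$ on $\tilde{T}_{3'}/(\tilde{T}_{3'}\cap S)$ is trivial since $[\tilde{S},\tilde{S}] \le S$; and the coprime fixed-point lemma then does the rest. You are correctly cautious that the covering is not formal from coprimeness alone. What your version buys is self-containedness (everything reduces to Theorem~\ref{ald} plus standard Deligne--Lusztig/Lang facts already in play elsewhere in the paper); what it costs is dependence on the $P_T \rtimes P_W$ decomposition, which \cite[Thm.\ 4.10.2]{GLS} only guarantees outside a short list of exceptional cases --- the paper leans on that same decomposition repeatedly, so this is a shared caveat rather than a gap, but if written up one should note that the excluded cases are handled separately (or have cyclic Sylow $3$-subgroups, where the claim is easy).
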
    

\begin{proof}
     \cite[Lemma 5.1]{sf}  states that restriction defines a bijection between $\Irr(B_0(\tilde{S}))$ and $\Irr(B_0(S))$. This and \cite[Lemma 3.5]{mmsv} give the statement.
\end{proof}

    \begin{lemma}\label{lessthan9}
        Let $G$ be a finite group such that $P \in \Syl_3(G)$ is not cyclic and $k_0(B_0(G)) \le 9$. Then $k_{0,\sigma}(B_0(G)) \in \{6,9\}$
    \end{lemma}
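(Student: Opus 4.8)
\emph{The plan} is to combine a congruence coming from the fact that $\sigma$ is a $3$-element with a classification of the groups allowed by the hypothesis, and then to upgrade the resulting bound on $k_0$ to the desired statement about $k_{0,\sigma}$ by showing that $\sigma$ actually fixes \emph{every} character in $\Irr_0(B_0(G))$.

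First I would record that $\langle\sigma\rangle$ is a $3$-group. On a primitive $3^a$-th root of unity $\sigma$ acts by raising to the fourth power, and since $4=1+3$ the order of $4$ in $(\Z/3^a)^{\times}$ is $3^{a-1}$ by the lifting-the-exponent lemma; as $\sigma$ fixes all $3'$-roots, $\sigma$ has $3$-power order in $\operatorname{Gal}(\Q(\xi)/\Q)$. Because $\sigma$ permutes $\Irr_0(B_0(G))$ (it preserves the principal block and character degrees, and the degrees are $3'$-numbers), the $\langle\sigma\rangle$-orbits on this set have $3$-power size, whence $k_{0,\sigma}(B_0(G))\equiv k_0(B_0(G))\pmod 3$; moreover $k_{0,\sigma}(B_0(G))\ge 1$ since the trivial character is $\sigma$-fixed.

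Next I would pin down the possible values of $k_0(B_0(G))$. Since $P$ is non-cyclic, the defect group of $B_0(G)$ is non-cyclic, and the bound $k_0(B_0(G))\le 9$ is very restrictive. Following the argument behind \cite[Theorem 3.3]{sf}, I would analyse the principal block through its inertial quotient $E=N_G(P)/\mathbf{C}_G(P)$, which (for abelian defect, via the block-theoretic equivalences known in this setting) embeds in $\Aut(P)$ and governs the height-zero count through the $E$-orbit data on $\Irr(P)$. This forces $P$ to be $2$-generated, so $|P/\Phi(P)|=9$, and restricts $P$ to a short explicit list of small $3$-groups. Running the orbit count for each admissible $E\le\GL_2(3)$, together with the corresponding small non-abelian cases, yields $k_0(B_0(G))\in\{6,9\}$ and never the values $4,5,7,8$.

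Finally I would show $k_{0,\sigma}(B_0(G))=k_0(B_0(G))$, which then gives the claim. The fixed field of $\sigma$ is $\Q(\zeta_{3'},\zeta_3)$, so it suffices that every $\chi\in\Irr_0(B_0(G))$ takes values there, i.e. that no genuine $9$-th (or higher $3$-power) root of unity occurs. When $P$ is elementary abelian, every element of $G$ has $3$-part of order at most $3$, so $\chi(g)\in\Q(\zeta_{3'},\zeta_3)$ automatically; in the remaining small cases the relevant $3'$-degree characters factor through a quotient whose Sylow $3$-subgroup has exponent $3$, and the same conclusion holds. Thus all of $\Irr_0(B_0(G))$ is $\sigma$-fixed and $k_{0,\sigma}(B_0(G))=k_0(B_0(G))\in\{6,9\}$. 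I expect this last step to be the main obstacle, namely controlling the fields of values of the height-zero characters (equivalently, excluding a $\sigma$-orbit of size $3$); here the congruence of the first paragraph is a useful fallback, since it already forces $k_{0,\sigma}(B_0(G))\in\{0,3,6,9\}$ and reduces the task to ruling out the value $3$.
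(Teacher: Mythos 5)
There is a genuine gap, and it sits exactly where you predict: ruling out the value $3$. The paper's proof is three lines long and rests on two citations to \cite{RSV20}: their Lemma~1.4 gives $3\mid k_{0,\sigma}(B_0(G))$ outright (for a block with nontrivial defect group), so $k_{0,\sigma}(B_0(G))\in\{0,3,6,9\}$; the trivial character rules out $0$; and their Theorem~A --- a CFSG-dependent theorem characterizing cyclic Sylow $3$-subgroups by the condition $k_{0,\sigma}(B_0(G))=3$ --- rules out $3$ because $P$ is not cyclic. Your proposal never actually discharges this last step. Route (a), classifying all principal $3$-blocks with $k_0(B_0(G))\le 9$ and noncyclic defect group via the inertial quotient $E=N_G(P)/\mathbf{C}_G(P)$ and its orbits on $\Irr(P)$, is not available for arbitrary finite groups: the ``block-theoretic equivalences'' you invoke to pass from $k_0(B_0(G))$ to $E$-orbit data are open conjectures (Alperin--McKay/Brou\'e territory) outside of special situations, and the statement that $k_0\le 9$ with noncyclic defect forces $|P/\Phi(P)|=9$ is itself a deep theorem of the type being proved in this line of work, not an input one can assume. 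Route (b), showing $k_{0,\sigma}(B_0(G))=k_0(B_0(G))$, is correct for elementary abelian $P$ (every $3$-element then has order $\le 3$, so all character values lie in $\Q(\zeta_{3'},\zeta_3)$), but the assertion that ``in the remaining small cases the relevant $3'$-degree characters factor through a quotient whose Sylow $3$-subgroup has exponent $3$'' is unsupported and is not something you can verify without first having the classification from (a). Note also that the lemma as stated only claims $k_{0,\sigma}\in\{6,9\}$, not $k_{0,\sigma}=k_0$, so you are trying to prove something strictly stronger than necessary.

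A secondary but real issue: your first paragraph only yields $k_{0,\sigma}(B_0(G))\equiv k_0(B_0(G))\pmod 3$, since the $\langle\sigma\rangle$-orbits have $3$-power size. That does \emph{not} force $k_{0,\sigma}(B_0(G))\in\{0,3,6,9\}$ unless you separately know $3\mid k_0(B_0(G))$, which again you only obtain from the problematic classification. The fix is to quote \cite[Lemma 1.4]{RSV20}, which gives the divisibility $3\mid k_{0,\sigma}(B_0(G))$ directly, and then \cite[Theorem A]{RSV20} to exclude the value $3$; with those two citations your ``fallback'' becomes the entire proof.
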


    \begin{proof}
        By \cite[Lemma 1.4]{RSV20} we have that $3|k_{0,\sigma}(B_0(G))$, thus we have $k_{0,\sigma}(B_0(G)) \in \{0,3,6,9\}$. Further, since $\Irr_0(B_0(G))$ contains the principal character, we have $k_{0,\sigma}(B_0(G))$ cannot equal $0$, and, since $G$ does not have cyclic Sylow $3$-subgroup, \cite[Theorem A]{RSV20} implies $k_{0,\sigma}(B_0(G)) \neq 3$. Therefore, $k_{0,\sigma}(B_0(G)) \in \{6,9\}$.
    \end{proof}

We now finish the proof of Theorem A in the case where $S$ is a group of Lie Type defined in characteristic $3$.

\begin{proposition}\label{defchar}
    Let $S$ be a simple group of Lie type defined in characteristic $3$ and let $S\le A \le \Aut(S)$. Then
    Theorem A holds for $A$.
    \end{proposition}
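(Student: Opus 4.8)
The plan is to reduce the proposition to the non-Galois count $k_0(B_0(A))$ by showing that, in the defining characteristic, the two quantities $k_{0,\sigma}(B_0(A))$ and $k_0(B_0(A))$ coincide. First I dispose of the groups already treated: if $S$ is one of the groups appearing in Propositions \ref{gapstuff} or \ref{An} the statement is known, so I assume $S$ is none of these. Proposition \ref{def} then shows that for every $A$ with $S \le A \le \Aut(S)$ the Sylow $3$-subgroup $P$ is not $2$-generated, i.e. $|P:\Phi(P)| \neq 9$; hence the forward implication of Theorem A is vacuous and I only have to prove that $k_{0,\sigma}(B_0(A)) \notin \{6,9\}$.

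The strategy is to establish the equality $k_{0,\sigma}(B_0(A)) = k_0(B_0(A))$ --- equivalently, that every character of $\Irr_0(B_0(A))$ is $\sigma$-fixed --- and then to quote the non-Galois analysis. Since Theorem A is the Galois refinement of \cite[Theorem 3.3]{sf}, the latter characterizes $2$-generation of $P$ by the condition $k_0(B_0(A)) \in \{6,9\}$; combined with Proposition \ref{def} this gives $k_0(B_0(A)) \notin \{6,9\}$, and the displayed equality then yields $k_{0,\sigma}(B_0(A)) \notin \{6,9\}$, as required. (In fact the same equality makes the proposition formally equivalent to \cite[Theorem 3.3]{sf} for these groups.) If one prefers to bypass \cite[Theorem 3.3]{sf}, the inequality $k_0(B_0(A)) > 9$ can be obtained directly, since in defining characteristic the number of $3'$-degree characters grows with $q$ and the rank, the finitely many small exceptions being verified in \cite{gap}.

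The heart of the argument is therefore the claim that $\sigma$ fixes every $\chi \in \Irr_0(B_0(A))$, and this is exactly where the hypothesis that $3$ is the defining prime enters. Because the characteristic is $3$, every semisimple element $s$ of the dual group $G^*$ is a $3'$-element, so its eigenvalues are $3'$-roots of unity and $\sigma$ fixes the class of $s$; by \cite[Proposition 3.3.15]{GM20} this means $\sigma$ stabilizes each rational Lusztig series $\mathcal{E}(G,s)$. Using the $\sigma$-equivariance of Jordan decomposition recorded in \cite{mmsv}, the action of $\sigma$ on $\mathcal{E}(G,s)$ is governed by its action on the unipotent characters of $\mathbf{C}_{G^*}(s)$; since the only irrationalities occurring in unipotent character values are of the form $\sqrt{\pm q}$, $\sqrt{2}$, or values in $\Q(\zeta_3)$, all of which lie in a field fixed by $\sigma$ (note that $\sigma$ fixes $\zeta_3$ because $\zeta_3^{4}=\zeta_3$), these unipotent characters are $\sigma$-fixed. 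Hence every $3'$-degree character of $G$ is $\sigma$-fixed, and the same conclusion is pushed up to $A$ through Clifford theory, tracking the roots of unity introduced by the field and graph automorphisms of $3$-power order lying in $A$.

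I expect the main obstacle to be precisely this last step of controlling character fields uniformly across all Lie types: verifying the $\sigma$-equivariance of Jordan decomposition in the presence of a disconnected centralizer $\mathbf{C}_{G^*}(s)$ (as for $\SL_n$ and its unitary analogue), confirming that no unipotent character value involves a primitive $9$th (or higher $3$-power) root of unity, and checking that extending from $S$ to $A$ introduces no such roots. Once this bookkeeping is in place, the equality $k_{0,\sigma}(B_0(A)) = k_0(B_0(A))$ follows and the proposition reduces cleanly to the defining-characteristic case of \cite[Theorem 3.3]{sf}.
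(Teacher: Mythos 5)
There is a genuine gap at the heart of your argument: the claimed equality $k_{0,\sigma}(B_0(A)) = k_0(B_0(A))$ is false in general for almost simple $A$ in defining characteristic. Your Lusztig-series argument does show (as does \cite[Lemma 4.2]{mmsv}, which the paper quotes) that every character of the \emph{simple} group $S$ is $\sigma$-fixed, but the equality breaks precisely at the step you flag as "bookkeeping": if the Sylow $3$-subgroup of $A/S$ has order at least $9$ (e.g.\ $S=\PSL_2(3^9)$ with $A/S$ cyclic of order $9$ generated by a field automorphism), then Gallagher's theorem produces characters $\lambda\hat\theta$ with $\lambda$ a linear character of $A/M$ of order $9$; since $\sigma$ sends $\lambda$ to $\lambda^4\neq\lambda$ and the Gallagher correspondence is a bijection, $\lambda\hat\theta$ is \emph{not} $\sigma$-fixed. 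So extending from $S$ to $A$ genuinely does introduce primitive $9$th roots of unity, $k_{0,\sigma}(B_0(A)) < k_0(B_0(A))$ in such cases, and your reduction to the non-Galois count $k_0(B_0(A))>9$ only yields an upper bound's worth of information --- it cannot give the lower bound $k_{0,\sigma}(B_0(A))\ge 10$ that the proposition actually requires when $3\mid |A/S|$.

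The paper's proof avoids this by never asserting the equality. For $3\mid|A/S|$ it takes the $4$ distinct $A$-orbits of $3'$-degree characters of $S$ produced in \cite[Proposition 5.7]{sf}, passes to the preimage $M$ of the normal $3$-complement of $A/S$, shows the relevant constituents are $A$-invariant and $\sigma$-fixed, extends them $\sigma$-fixedly to $A$ via \cite[Corollary 6.4]{Nav18}, and then multiplies only by the \emph{three} linear characters of $A/M$ of order dividing $3$ (which $\sigma$ does fix, since $\zeta_3^4=\zeta_3$). This gives $k_{0,\sigma}(B_0(A))\ge 12$ directly, with a separate treatment of $\operatorname{P\Omega}_8^+(q)$ where $A/S$ need not have a normal $3$-complement. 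To repair your proposal you would need to replace the equality claim by exactly this kind of lower-bound construction; your cases $A=S$ and $3\nmid|A/S|$ are essentially fine (the latter still needs \cite[Lemma 3.5]{mmsv} or Lemma \ref{3prime} rather than the equality).
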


    \begin{proof} Recall that throughout we exclude $\,^2F_2(2)'$, as it was considered in Proposition \ref{gapstuff}.
        From \cite[Lemma 4.2]{mmsv} we have that all characters in $\Irr(S)$ are $\sigma$-fixed. We may assume $S$ is not isomorphic to any of the groups discussed in Propositions \ref{An} or \ref{gapstuff}, so  \cite[Proposition 5.6]{sf} and Proposition \ref{def} give the desired result in the case of $A=S$. In the case where $A>S$ with $3\nmid|A/S|$, \cite[Proposition 5.6]{sf} and \cite[Lemma 3.5]{mmsv} give the desired result. 

        We now consider the case where $3$ divides $|A/S|$. The proof of \cite[Proposition 5.7]{sf} gives $12$ characters in $\Irr_{0}(B_0(A))$ above $4$  distinct $A$-orbits in $\Irr_{0}(B_0(S))$. Notice that, if $S\neq \operatorname{P\Omega}^+_8(q)$, $A/S$ has a normal $3$-complement since $3\nmid |\tilde{S}/S|$ and further note that in all of these cases $|A/S|$ has cyclic Sylow $3$-subgroups. Assume $S\neq \operatorname{P\Omega}^+_8(q)$, and let $M$ be the preimage of the $3$-complement under the canonical projection map $\pi :A \rightarrow A/S$. Let $\chi$ be one of the $12$ characters in $\Irr_{0,\sigma}(B_0(A))$ discussed above and let $\theta $ be an irreducible constituent of $\chi_M$. Then \cite[Proposition 4.2 and Lemma 3.5]{mmsv} give that $\theta $ is $\sigma$-fixed. Further, since $3\nmid\chi(1)$ and $A/M$ has $3$-power order, Clifford theory gives that $\theta$ must be $A$-invariant. Since $A/M$ is cyclic, this gives that $\theta $ extends to $A$. This extension must be in the principal block since $A/M$ has $3$-power order. Since $S \le M'$, we have $3\nmid|M/M'|$ and  the determinental order of $\theta$ must be prime to $3$.  We then have by \cite[corollary 6.4]{Nav18} that $\theta$ has a $\sigma$-fixed extension. Then, applying Gallagher's Theorem, we obtain $3$ $\sigma$-fixed extensions of $\theta$ corresponding to  $3$ $\sigma$-fixed linear characters of $A/M$, which are inflations of the characters of the unique factor of $A/M$ of order $3$. Since $|A/M|$ is  a power of $3$, all of these characters must lie in the principal block. We can repeat this argument for characters above each of the $4$ distinct $A$-orbits mentioned above to obtain $k_{0,\sigma} (B_0(A))\ge 12$. 

        We now consider $S = \operatorname{P\Omega}^+_8(q)$. First note that, since $3\nmid |\tilde{S}/S|$ and $\Aut(S)/ S\cong \tilde{S}/S\rtimes  (S_3\times C_k)$ for some $k$, $\Aut(S)$ has an index-$2$ subgroup $N_0$ which contains $S$ such that $N_0/S$ has a normal $3$-complement. It follows $A/S$ either has a normal $3$-complement or $A$ has an index $2$ subgroup $N := A\cap N_0$ such that $ N/S$ has a normal $3$-complement in $A$. In the first case we may argue as above, so assume the second case. First, we claim that there are at least $6$ $X$-invariant characters in $\Irr_{0}(B_0(S))$, where $X \in \Syl_3(A/S)$. The degrees of characters with multiplicities for $\tilde{S}$ can be found on  \cite{lubeckwebsite}. From this we obtain 6 character degrees which are $3'$ with $3'$ multiplicity in $\tilde{S}$. This gives $6$ $X$-invariant characters in $\tilde{S}$. Let $\theta$ be one such character.
        We then see that, since $3\nmid|\tilde{S}/S|$,  at least one of the constituents of $\theta_S$ must be $X$-invariant. This shows the claim. 
        Now we can argue as in the previous case to obtain $k_{0,\sigma}(B_0(N))\ge 18$. This and Lemma \ref{3prime} give that it is sufficient to show that these characters lie in at least $10$ distinct $A$-orbits. The orbits having size at most $2$ and the fact that the principal character lies in an orbit of size $1$ are sufficient for the desired $10$ orbits. 
    \end{proof}

      We now shift our attention to the case where $S$ is defined in characteristic $p \neq 3$.
      \begin{lemma}\label{4unipotents}
      Let S be a simple group of Lie type defined over $\mathbb{F}_q$, where $q$ is not a power of 3. Further, assume $3$ divides $|S|$. Then $\Irr_{0}(B_0(\tilde S))$ contains at least 4 rational valued unipotent characters that extend to $\Aut(S)$ and restrict irreducibly to $S$ unless $S$ is one of the following:

      \begin{itemize}
          \item $S = \PSL^\epsilon_4(q)$ with $3|(q -\epsilon)$, $S= \PSp_4(2^a)$ or $\PSL_3(q)$ with $3|(q-\epsilon)$, in which case $S$ has 3 such characters;
          \item $S= \PSL^\epsilon_3(q)$ with $3|(q+\epsilon)$ or $\PSL_2(q)$, in which case $S$ has $2$ such characters.
      \end{itemize}

  \end{lemma}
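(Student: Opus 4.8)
The plan is to track, for each unipotent character of $\tilde S = \mathbf{G}_{ad}^F$, the three properties demanded by the statement: membership in $\Irr_0(B_0(\tilde S))$ (principal block, $3'$-degree), rationality, and extendibility to $\Aut(S)$. Irreducible restriction to $S$ comes for free: a unipotent character lies in $\mathcal E(\tilde S,1)$, so in the language recalled above the relevant component group $\mathbf C_{\mathbf G^*}(1)/\mathbf C^{\circ}_{\mathbf G^*}(1)$ is trivial and the restriction to $S$ remains irreducible. For extendibility I would use that field and diagonal automorphisms fix every unipotent character, so that $\Aut(S)$-invariance is governed solely by the graph automorphisms, and that an $\Aut(S)$-invariant unipotent character extends to $\Aut(S)$ by the known extendibility results for unipotent characters (see \cite{GM20}). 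Thus the whole problem reduces to counting the unipotent characters of $3'$-degree in $B_0(\tilde S)$ that are both rational and graph-invariant.

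First I would isolate which unipotent characters are of $3'$-degree: this is read off directly from the generic degrees, evaluating the relevant cyclotomic factors modulo $3$. Let $d$ denote the order of $q$ modulo $3$, so that in the linear and unitary cases $d$ is detected by $3\mid(q-\epsilon)$ versus $3\mid(q+\epsilon)$. To decide which of these $3'$-degree characters lie in $B_0(\tilde S)$ I would invoke the block distribution of unipotent characters of \cite{CE04}, together with the $d$-Harish--Chandra description in \cite[Sec.~2.6]{GM20}; here one must be careful, since $3$ may be a bad prime and the principal $d$-series need not lie entirely in $B_0$. The trivial and Steinberg characters always qualify, providing a baseline of two, and the task is to produce two more in all but the listed small cases.

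I would then proceed by Lie type. For the exceptional types $G_2$, ${}^3D_4$, $F_4$, $E_6^{\epsilon}$, $E_7$, $E_8$ and ${}^2F_4(q^2)$ (with $q^2>2$), the large number of unipotent characters makes it routine to exhibit at least four rational, graph-invariant, $3'$-degree members of $B_0$ from the degree data of \cite{lubeckwebsite} and \cite{GM20}; the genuinely irrational unipotent characters (occurring only for ${}^2F_4$, $E_7$, $E_8$) and those permuted by a graph automorphism are too few to spoil the bound. The same holds for classical types $B$, $C$, $D$ of rank at least three and for $D_4$. The tight cases are the small classical groups. For $\PSL_2(q)$ only $1$ and $\mathrm{St}$ are unipotent, giving two. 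For $\PSL_3^{\epsilon}(q)$ the three unipotent degrees $1$, $q(q+1)$, $q^3$ (interpreted via Ennola duality in the unitary case) all avoid $3$ when $3\mid(q-\epsilon)$, yielding three, whereas when $3\mid(q+\epsilon)$ the middle degree acquires a factor of $3$ and only the two members of the principal $d$-series survive. For $\PSL_4^{\epsilon}(q)$ with $3\mid(q-\epsilon)$ the factor $q^2+q+1\equiv 0 \pmod 3$ eliminates the characters labelled $(3,1)$ and $(2,1,1)$, leaving exactly three; in every case the type-$A$ graph automorphism acts as complex conjugation, hence trivially on these rational characters, so all survivors extend. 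Finally $\PSp_4(2^a)$ must be treated by hand, using that in characteristic $2$ the group carries the exceptional graph automorphism: a direct check with its six unipotent characters yields exactly three that are simultaneously rational, $3'$-degree, in $B_0$, and invariant, in contrast with $\PSp_4(q)$ for odd $q$, where the extra automorphism is absent and four survive.

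The main obstacle is not the higher-rank groups, where ``at least four'' is comfortable, but confirming that the exception list is exactly right. The delicate bookkeeping is concentrated in the borderline small cases, where one must simultaneously control the $3'$-degree condition (which hinges on whether a cyclotomic factor such as $q+1$ or $q^2+q+1$ vanishes modulo $3$ for the given congruence of $q$), the block membership for the bad prime $3$, and the graph-automorphism action --- trivial on type-$A$ unipotents but genuinely active for $\PSp_4(2^a)$, and for types $D$ and $E_6$. The crux is therefore the $\PSp_4(2^a)$ analysis and the verification that no further small group, across all types and all residues of $q$ modulo $3$, drops below four while evading the stated list.
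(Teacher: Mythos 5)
Your overall strategy is sound and, if executed, would establish the lemma; but as written it is a program rather than a proof. The paper itself does almost none of the casework you describe: it simply cites \cite[3.9--3.11]{RSV20}, where characters with all the required properties \emph{except} rationality (membership in $\Irr_0(B_0(\tilde S))$, extension to $\Aut(S)$, irreducible restriction to $S$, and the exact counts in the exceptional cases) have already been constructed, and then only adds the observation that these characters are rational --- automatic for classical types, where all unipotent characters of $\tilde S$ are rational, and checked via \cite[Table 1]{Gec03} for the one remaining character of $G_2(q)$. Your route re-derives that prior lemma from generic degrees and the block theory of \cite{CE04}, which is legitimate in principle, but every decisive step is deferred: the $3'$-degree computations, the block membership at the bad prime $3$ (which you rightly flag as delicate), the $\PSp_4(2^a)$ graph-automorphism analysis, and the confirmation that no further small group falls below four are all announced as ``routine'' or ``a direct check'' without being carried out. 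Since the entire content of the statement is exactly that bookkeeping, the proposal as it stands has not proved the lemma.

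There is also one concrete factual error that would bite if you executed your plan: irrational unipotent characters do not occur only for ${}^2F_4$, $E_7$, $E_8$. The cuspidal unipotent characters $G_2[\theta]$, $G_2[\theta^2]$ of $G_2(q)$ (and likewise $E_6[\theta]$, $E_6[\theta^2]$ for type $E_6^{\epsilon}$) take values in $\mathbb{Q}(\zeta_3)$ and are not rational. This is precisely why the paper's proof needs the citation to Geck for $G_2(q)$: only three of the four characters inherited from \cite{RSV20} there are principal series, trivial, or Steinberg, and the rationality of the fourth is the one point that genuinely requires verification. Under your blanket claim you would skip exactly that check, so the $G_2(q)$ case of the lemma would be left unjustified.
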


  \begin{proof}
      The appropriate characters can be chosen as in \cite[3.9-3.11]{RSV20} . For the classical groups all unipotent characters in $\tilde{S}$ are rational, and in all exceptional groups principal series, trivial and Steinberg characters were chosen except in the case of $G_2(q)$, where $3$ such characters were chosen. For the remaining character of  $G_2(q)$ we have \cite[Table 1]{Gec03} gives the chosen character is rational.
  \end{proof}

\begin{proposition}\label{theo:deg}
Let $S$ be one of the following simple groups
\begin{enumerate}
    \item $\operatorname{\PSL}^{\epsilon}_n(q)$ with $3|(q-\epsilon)$ and $n\ge5$
    \item $\operatorname{\PSL}^{\epsilon}_n(q)$ with $3|(q+\epsilon)$ and $n\ge 8$
    \item $\operatorname{\PSp}_{2n}(q)$ with $n\ge 4$
    \item $\operatorname{P\Omega}_{2n+1}(q)$ with $n\ge 4 $
    \item $\operatorname{P\Omega}^{-}_{2n}(q)$ with $n\ge 5$
    \item $\operatorname{P\Omega}^{+}_{2n}(q)$ with $n\ge 4$

\end{enumerate} and let $S\le A\le \Aut(S)$ with $3 \nmid |A/S|$. Then Theorem A holds for $A$.

\end{proposition}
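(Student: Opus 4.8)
The plan is first to reduce Theorem A for these $A$ to a single lower bound. Each of the six families consists exactly of the classical and large-rank groups excluded from Proposition \ref{theo:classical}, so $S$ does not have $2$-generated Sylow $3$-subgroups. Since $3\nmid|A/S|$, a Sylow $3$-subgroup $P$ of $S$ is also a Sylow $3$-subgroup of $A$, whence $|P:\Phi(P)|\ge 27\neq 9$; thus the left-hand condition of Theorem A fails for $A$, and it suffices to show that the right-hand condition fails, i.e.\ that $k_{0,\sigma}(B_0(A))\notin\{6,9\}$. Because $P$ is non-cyclic, \cite[Lemma 1.4]{RSV20} gives $3\mid k_{0,\sigma}(B_0(A))$ and \cite[Theorem A]{RSV20} gives $k_{0,\sigma}(B_0(A))\neq 3$; since the trivial character is $\sigma$-fixed we have $k_{0,\sigma}(B_0(A))\ge 6$, and the whole statement now reduces to the single inequality $k_{0,\sigma}(B_0(A))\ge 12$.

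Next I would produce at least twelve $\sigma$-fixed, $3'$-degree characters in the principal block, working first inside $S$. By Lemma \ref{4unipotents} none of the six families is an exceptional case, so $\Irr_0(B_0(\tilde S))$ contains at least four rational-valued unipotent characters that extend to $\Aut(S)$ and restrict irreducibly to $S$; being rational they are $\sigma$-fixed, and their restrictions furnish four distinct, $\Aut(S)$-invariant members of $\Irr_{0,\sigma}(B_0(S))$. To pass from four to twelve I would use that for the classical groups every unipotent character of $\tilde S$ is rational (as recorded in the proof of Lemma \ref{4unipotents}), so that the \emph{entire} set of height-zero unipotent characters in $B_0$ is $\sigma$-fixed; one then checks that, for the rank bounds imposed in the statement, this set already has more than nine elements, exactly as in the non-Galois count of \cite[Section 5]{sf}. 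Where a purely unipotent count falls short I would supplement it with characters in Lusztig series $\mathcal{E}(S,s)$ attached to rational semisimple $3$-elements $s$, which are $\sigma$-stable via $\sigma(\mathcal{E}(S,s))=\mathcal{E}(S,\sigma(s))$ and lie in $B_0$ by \cite[Theorem 9.12]{CE04}.

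It then remains to transfer the bound to $A$, and here $3\nmid|A/S|$ is decisive. For types $B_n$ and $C_n$ there is no graph automorphism, so all unipotent characters are $\Aut(S)$-invariant and lie in singleton $A$-orbits; for the remaining types only a controlled family of unipotent characters is interchanged by the graph or diagonal part of $\Aut(S)$. Using Lemma \ref{3prime}, every $\theta\in\Irr_{0,\sigma}(B_0(S))$ lies beneath some member of $\Irr_{0,\sigma}(B_0(A))$, and characters of $A$ over distinct $A$-orbits are distinct, so $k_{0,\sigma}(B_0(A))$ is at least the number of $A$-orbits on $\Irr_{0,\sigma}(B_0(S))$. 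Moreover, whenever such a $\theta$ is $A$-invariant and extends to $A$ — which the four characters from Lemma \ref{4unipotents} do, being $\Aut(S)$-invariant — Gallagher's theorem together with the fact that $A/S$ is a $3'$-group (so all of its irreducible characters are $\sigma$-fixed) shows that $\theta$ lies beneath exactly $|\Irr(A/S)|$ distinct $\sigma$-fixed characters of $A$, supplying further slack. Assembling these contributions yields $k_{0,\sigma}(B_0(A))\ge 12$.

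I expect the principal obstacle to be bookkeeping rather than a single hard idea: one must simultaneously ensure that enough characters are $\sigma$-fixed, of $3'$-degree, and in the principal block, while tracking the $A$-action, as one moves among $S$, $\tilde S$ and $A$. This is most delicate in the linear and unitary families, where $|\tilde S/S|=\gcd(n,q-\epsilon)$ may be divisible by $3$; then Lemma \ref{tilde} no longer applies with the prime $3$, restriction from $\tilde S$ to $S$ can be reducible, and the diagonal part of the $A$-action is genuinely nontrivial, so one must check that the rationality guaranteed by Lemma \ref{4unipotents} persists and that the resulting $A$-orbit count still reaches $12$. Verifying the raw count $\ge 12$ in the smallest groups of each family (for instance $\PSp_8(q)$ and $\PSL^{\epsilon}_5(q)$) is precisely where the rank hypotheses are used, and would rely on the explicit character-degree data underlying \cite[Section 5]{sf}.
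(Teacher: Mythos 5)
Your overall architecture agrees with the paper's: none of the listed groups has $2$-generated Sylow $3$-subgroups, and since $3\nmid|A/S|$ the Sylow $3$-subgroups of $A$ and $S$ coincide, so everything reduces to the lower bound $k_{0,\sigma}(B_0(A))\ge 10$, which one gets by exhibiting enough $\Aut(S)$-orbits of $\sigma$-fixed $3'$-degree characters in $B_0(S)$ and lifting each orbit with Lemma \ref{3prime}. The execution, however, has genuine gaps. First, your claim that the height-zero unipotent characters in $B_0$ ``already'' number more than nine is false precisely where the proposition is hardest: for $\PSL^{\epsilon}_5(q)$ with $3\mid(q-\epsilon)$ the principal block contains only six unipotent characters of $3'$-degree, and \cite[Proposition 5.22]{sf} guarantees only nine orbits for the other families, so the non-unipotent supplement is not optional slack but the crux of the argument --- and you leave it as a one-sentence sketch. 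Second, your mechanism for placing the supplementary characters in the principal block is backwards: \cite[Theorem 9.12]{CE04} says $B_0$ is \emph{contained in} the union of the series $\mathcal{E}(G,s)$ with $s$ a $3$-element, not that every such series lies in $B_0$; the paper instead uses Fong--Srinivasan to see that the principal block is the unique unipotent block (so whole series land in it), or \cite[Lemma 4.6]{mmsv} together with \cite[Lemma 17.2]{CE04} for deflations. Third, $\sigma$-stability of a series does not make its members $\sigma$-fixed; one must separate the characters inside $\mathcal{E}(G,s)$, which the paper does by checking that the relevant centralizers (e.g.\ of $\mathrm{diag}(\omega,\omega,\omega,1,1)$ and $\mathrm{diag}(\omega,\omega,\omega,\omega,\omega^{-1})$ in $\GL_5^{\epsilon}(q)$) have unipotent characters of pairwise distinct $3'$-degrees.

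The remaining ``bookkeeping'' you defer is also load-bearing: to get characters of $S$ rather than of $G=\SL^{\epsilon}_n(q)$ or $\tilde G$, one needs irreducibility of restrictions (via connectedness of $\mathbf{C}_{\mathbf{G}^*}(s)$, checked by showing $zs$ is never conjugate to $s$ for $1\ne z\in\mathbf{Z}(\tilde{\mathbf{G}})$ using eigenvalue multiplicities) and triviality on $\mathbf{Z}(G)$ (via \cite[Lemma 4.4]{NT13} or the criterion $s\in\mathbf{Z}(Q)\cap(G^*)'$ from \cite{GSV}); this is exactly where the case $3\mid\gcd(n,q-\epsilon)$, which you correctly flag as delicate, must be resolved with explicitly chosen semisimple elements of determinant $1$. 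Finally, your Gallagher remark is not quite right as stated: the products $\beta\hat\theta$ for $\beta\in\Irr(A/S)$ need not all lie in the principal block of $A$ when $A/S$ is a $3'$-group, so they cannot be counted toward $k_{0,\sigma}(B_0(A))$ without further justification; fortunately the orbit count via Lemma \ref{3prime} suffices and this extra claim is not needed.
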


\begin{proof} By Propositions \ref{theo:classical} and \ref{theo:psl} it is sufficient to show that $k_{0,\sigma}(B_0(A))\ge 10 $ in all cases.  For the majority of the proof we will proceed by choosing characters similarly to how they were chosen in \cite[Theorem 5.23]{sf} and argue as to which of these characters are $\sigma$-fixed. We first consider the case where $S = \PSL^{\epsilon}_n(q)$. Let $G = \SL^{\epsilon}_n(q)$ and $\tilde{G} = \GL^{\epsilon}_n(q)$. 
First, let $n =5 $, let $q$ be a power of a prime such that $3|(q-\epsilon)$, and let  $\omega$ denote a primitive third root of unity in $\mathbb{F}_{q^2}$. Using \cite{FS82} we see that the principal block is the unique unipotent block of $S$ and contains 6 unipotent characters of $3'$ degree. Thus, we see that any deflations of characters in series corresponding to semisimple $3$-elements in  $G^*$ must lie in the principal block by \cite[Theorem 9.12 and Lemma 17.2]{CE04}. The unipotent characters are $\sigma$-fixed by \cite[Lemma 4.7]{mmsv} and are in distinct $\Aut(S)$ orbits by \cite[Theorem 2.5]{malle08}. This and Lemma \ref{3prime} give that it is sufficient to find $4$ additional orbits of characters in $\Irr_{0,\sigma}(B_0(S))$.  Let $s,t \in \tilde{G}$ be the semisimple elements    $\mathrm{diag}(\omega,\omega,\omega,1,1)$  and $\mathrm{diag}(\omega,\omega,\omega,\omega,\omega^{-1})$ respectively.  Again using the work of \cite{FS82}, we obtain the structure of the centralizers of $s$ and $t$ and consequently character degrees of the characters in $\mathcal{E}(\tilde{G},s)$ and in $\mathcal{E}(\tilde{G},t)$. We see that the former has $2$ characters of distinct $3'$ degrees and the latter has $3$. Thus, by \cite[Proposition 3.3.15]{GM20} the characters in these series are $\sigma$-fixed. We have further that $(|s|,|Z(\mathbf{G})|) = (3,(n,q-\epsilon)) = 1$ and $\mathbf{C}_{\mathbf{G}^*}(s)$ is connected by \cite[Proposition 14.20]{MT11}, thus each element of $\mathcal{E}(\tilde{G},s)$ and of $\mathcal{E}(\tilde{G},t)$ has a unique irreducible constituent in $G$. Note that for any $ z \in \mathbf{Z}(\tilde{G})$ we can argue by dimension of eigenspaces that $s$ is not conjugate to $zt$, which gives that the characters below elements of $\mathcal{E}(\tilde{G},s)$ are distinct from those below elements of $\mathcal{E}(\tilde{G},t)$. These constituents must therefore be $\sigma$-fixed and lie in the principal block. If $\pi:\tilde{G}\rightarrow G^{*}$ is the canonical projection map, from the structure descriptions given in Proposition \ref{theo:psl} we see that $\pi(s)$ and $\pi(t)$ centralize Sylow $3$-subgroups of $G^*$.  Since $s$ and 
$t$ are also in $[G^*,G^*]$, \cite[Lemma 4.4]{NT13} gives that the characters in $\mathcal{E}(G,\pi(s))$ and  $\mathcal{E}(G,\pi(t))$ are trivial on $\mathbf{Z}(G)$ and we may take their deflations, which lie in the principal block by \cite[Lemma 17.2]{CE04}. Thus, we have more than $10$ $\Aut(S)$-orbits of characters in $\Irr_{0,\sigma}(B_0(S))$. 

Now assume $n \neq 5$. 
    By \cite[Proposition 5.22]{sf} we have that, in all cases, $S$ has at least 9 unipotent $3'$-degree characters in distinct $\operatorname{Aut(S)}$ orbits in its principal block. The fact that the unipotent characters are rational, the fact that non-unipotent characters cannot be conjugate to unipotent characters, and 
 Lemma \ref{3prime} give that it is sufficient to find one non-unipotent character in $\Irr_{0,\sigma}(B_0(S))$. From the proof of \cite[Theorem 3.4]{GSV} we see that a sufficient condition for the semisimple character $\chi_s \in \Irr(G)$ to contain $\mathbf{Z}(G)$ in its kernel is that $s \in \mathbf{Z}(Q) \cap (G^*)'$, where $Q \in \syl_3(G^*)$. The same proof in loc. cit. gives that this intersection is non-trivial.  If $3\nmid n$ or $3|(q+\epsilon)$, we  take any $s\in \mathbf{Z}(Q) \cap (G^*)'$ of order $3$ and we have $(|s|,|Z(\mathbf{G})|) = (3,(n,q-\epsilon)) = 1$, where $\mathbf{G}$ is as in the introduction to Section $3$, and $C_{\mathbf{G}^*}(s)$ is connected by \cite[Proposition 14.20]{MT11}. Thus, there is a unique semisimple character in $\mathcal{E}(G,s)$. Thus, $\chi_s$ is the only character of its particular degree in $\mathcal{E}(G,s)$ and  \cite[Proposition 3.3.15]{GM20} implies that $\chi_s$, and consequently its deflation are $\sigma$-fixed characters of $3'$-degree. We also see that \cite[Lemma 4.6]{mmsv} implies that $\chi_s$ lies in the principal block and \cite[Lemma 17.2]{CE04} gives that its deflation does as well.

     Thus, we are left with the case where $3|n$ and $3|(q-\epsilon)$. If $n$ is not a power of 3, then arguing as in the proof of Proposition \ref{theo:gl} we have that $P \in \syl_3(\tilde{G})$ can be written in a block diagonal form with multiple blocks. Since $n>2$ at least one of these blocks will have size that is a multiple of 3. Suppose  the block form is as follows:
    $$
      \begin{pmatrix}
A_1&0& ...\\ 
0&A_2& ...\\
\vdots &\vdots&\ddots\\
\end{pmatrix}
$$ were the $A_1$ is a $3k\times3k$ matrix. Then the semisimple element of $\GL^{\epsilon}_n(q)$ given by $s = \mathrm{diag}(\omega,...,\omega,1,...,1)$, where $\omega$ is a primitive third root of unity and has multiplicity $3k$, clearly centralizes $Q$. Also, we have $\mathrm{det}(s) = 1$, so the image under the projection map $\pi$ into $\operatorname{PGL}^{\epsilon}_n(q)$ must be in the commutator subgroup. Thus, by the discussion in \cite{GSV}, we have that the semisimple character corresponding to $\pi(s)$ has $\mathbf{Z}(G)$ in its kernel and we may take its deflation. Arguing as in the proceeding paragraph, it is sufficient to show that $C_{\mathbf{G^*}}(\pi(s))$ is connected. From  \cite[Remark 2.6.15]{GM20} we see that a sufficient condition for $C_{\mathbf{G^*}}(\pi(s))$ to be connected is that for all non-trivial $z\in \mathbf{Z}(\mathbf{\tilde{G}})$ we have $zs$ is not conjugate to $s$ in $\mathbf{\tilde{G}}$, where $\mathbf{\tilde{G}}^{F} = \tilde{G}$ with $F$  a Steinberg map as in the introduction to Section 3. Assume there exists a non-trivial $ z\in \mathbf{Z}(\mathbf{\tilde{G}})$ such that $zs$ is conjugate to $s$. Then there exists $a \in \bar{\mathbb{F}}_p$ such that $a\omega = 1$ and $a\cdot 1= \omega$, which is absurd. We can also argue as above to see that these characters lie in the principal block.

Lastly we consider the case in which 
 $n = 3^i$ with $i >1$. We then choose $s = \mathrm{diag}(\omega,...,\omega,\omega^{-1},...,\omega^{-1},1,...,1)$ where $\omega$ is a primitive third root of unity and each of $\omega,\omega^{-1}$ and $1$ have the same multiplicity. If $\pi:\tilde{G}\rightarrow G^{*}$ is the canonical projection map, we can argue as in the above cases to obtain that a semisimple character $\chi_{\pi(s)}$ corresponding to $\pi(s)$ is an element of $\Irr_0(B_0(G))$ and has $\mathbf{Z}(G)$ in its kernel. Thus, it is sufficient to show $\chi_{\pi(s)}$ is $\sigma$-fixed. First we consider the semisimple character  $\chi_s \in\mathcal{E}(\tilde{G},s)$. Since no other characters in $\mathcal{E}(\tilde{G},s)$ have the same degree as $\chi_s$ and since $\sigma$ permutes that characters of  $\mathcal{E}(\tilde{G},s)$  by \cite[Proposition 3.3.15]{GM20}, we see that $\chi_s$ is $\sigma$-fixed. Since $\chi_{\pi(s)}$ is a constituent of $(\chi_s)_G$ we have that  \cite[Proposition 3.3]{fieldofvalues} gives $\mathbb{Q}((\chi_s)_G) = \mathbb{Q}(\chi_{\pi(s)})$ and $\chi_{\pi(s)}$ must be $\sigma$-fixed as desired.

    Now  consider the case of $S = \operatorname{P\Omega}_8^{+}(q)$. Using   \cite{FS89} we see that in the case where $3|(q-1)$ the principal block is the unique unipotent block of $\operatorname{SO}^+_8(q)$  and we therefore have that all characters in series corresponding to semisimple elements of $3$-power order lie in the principal block. If instead $3|(q+1)$ we see that the principal block is a unique unipotent block of maximal defect, thus all $3'$-degree characters in series corresponding to semisimple of $3$-power order lie in the principal block by \cite[Theorem 9.12]{CE04}.  Considering semisimple elements in $\operatorname{(CO^\circ})^+(q)$ with eigenvalues  $(\omega,\omega, I_6)$ and $(\omega I_3,\omega^{-1}I_3,1,1)$ respectively and arguing as in the above case gives more than $9$ $\Aut(S)$ orbits in  $\Irr_{0}(B_0(S))$ of characters which are $\sigma$-fixed. This is sufficient by Lemma \ref{3prime}.

Now let $S$ be one of the remaining cases. 
    Then, \cite[Proposition 5.22]{sf} gives that, in all cases, $S$ has at least 9 unipotent $3'$-degree characters in distinct $\operatorname{Aut(S)}$ orbits in its principal block. Thus, we can argue as in the preceding cases that it is sufficient to find one non-unipotent character in $\Irr_{0,\sigma}(B_0(S))$. From \cite[Theorem 3.4]{GSV} we have that there exists $\chi \in \Irr_{0}(S)$ that is the deflation of a semisimple character $\chi_s \in \Irr(G)$ for a semisimple 3-element of $G^*$. We see further from the proofs of loc. cit. that, in the cases that $\mathbf{G}$ of type B,C, or D, where $S = 
 \mathbf{G}^F/\mathbf{Z}(\mathbf{G}^F)$, we may take our semisimple element to be any semisimple element in the center of a Sylow 3-subgroup, thus we may choose one of order 3. Note that in all of these cases we additionally have that $(|s|,|\mathbf{Z}(\mathbf{G})|) = 1$, thus $C_{\mathbf{G}^*}(s)$ is connected by \cite[Proposition 14.20]{MT11}. Thus, we can argue as in previous cases that the deflation of $\chi_s$ in $S$ lies in $\Irr_{0,\sigma}(B_0(s))$.
\end{proof}

\begin{proposition}\label{ad}
    Let $S\le A \le \Aut(S)$ with $S$ being one of the following:
    \begin{enumerate}
    \item $\PSL_2(q)$
        \item $\operatorname{\PSL}^{\epsilon}_n(q)$ with $3|(q+\epsilon)$ and $3\le n\le7$
    \item $\operatorname{\PSp}_{2n}(q)$ with $n= \{2,3\}$
    \item $\operatorname{P\Omega}_7(q)$
    \item $\operatorname{P\Omega}^{-}_8(q)$ \item$\operatorname{G}_2(q),\,^3\operatorname{D}_4(q)$ or $\,^2\operatorname{F}_4(q^2)'$
    \end{enumerate}
    
    Further, assume $3\nmid |A/S|$. Then $\tilde{S}A = \tilde{S}C_{A}(P)$, where $P \in \Syl_3(\tilde{S})$
\end{proposition}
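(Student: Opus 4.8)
The plan is to rewrite the desired equality as the Frattini-type hypothesis feeding Theorem \ref{ald}, and then to verify it by analysing how the field and graph automorphisms in $A$ act on a carefully chosen Sylow $3$-subgroup. First I would record that $3\nmid |\tilde S/S|$ for every group in the list: the diagonal automorphism group is trivial or a $2$-group in cases (1),(3),(4),(5),(6), and in case (2) we have $3\nmid(q-\epsilon)$ since $3\mid(q+\epsilon)$, so $3\nmid\gcd(n,q-\epsilon)$. Consequently $3\nmid |A/(A\cap\tilde S)|$ and $P$ may be taken inside $S$, giving $P\in\Syl_3(S)=\Syl_3(A\cap\tilde S)=\Syl_3(A)=\Syl_3(\tilde S)=\Syl_3(\tilde S A)$. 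A short coset computation shows that $\tilde S A=\tilde S\,C_A(P)$ is equivalent to $A=(A\cap\tilde S)\,C_A(P)$: if $a=tc$ with $t\in\tilde S$ and $c\in C_A(P)$, then $t=ac^{-1}\in A\cap\tilde S$.

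Since $A\cap\tilde S$ is normal in $A$ (as $\tilde S$ is characteristic in $\Aut(S)$) and $P\in\Syl_3(A\cap\tilde S)$, the Frattini argument gives $A=(A\cap\tilde S)\,N_A(P)$. Thus it suffices to prove $N_A(P)=N_{A\cap\tilde S}(P)\,C_A(P)$, that is, every automorphism of $P$ induced by an element of $N_A(P)$ is already induced by an element of $N_{\tilde S}(P)\cap A=N_{A\cap\tilde S}(P)$. Because $N_A(P)/N_{A\cap\tilde S}(P)\cong A/(A\cap\tilde S)$ is a $3'$-group generated by field and order-$2$ graph automorphisms — order-$3$ triality on $D_4$ cannot occur here since $3\nmid|A/S|$ — I would choose $P=P_T\rtimes P_W$ stable under representatives of these automorphisms, using the decomposition in \cite[Thm. 4.10.2]{GLS}, and then show that each representative can be multiplied by an element of $N_{\tilde S}(P)$ so as to centralize $P$.

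The main mechanism for the field automorphisms is a fixed-point computation. If $\phi=F_{q_0}$ has order $m$ (so $3\nmid m$ and $q=q_0^m$), then $C_{\tilde S}(\phi)=\mathbf{G}_{ad}^{F_{q_0}}=:\tilde S_0$, and comparing the $3$-parts of the two order polynomials via the lifting-the-exponent lemma (the $3$-valuation of the relevant cyclotomic factors at $q$ equals that at $q_0$ once $3\nmid m$, even though the sign $\epsilon$ may flip when $m$ is even) gives $|\tilde S_0|_3=|\tilde S|_3$. Hence a Sylow $3$-subgroup $P$ of $\tilde S_0$ is already a Sylow $3$-subgroup of $\tilde S$, and $\phi$ centralizes it outright. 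For the order-$2$ graph automorphism $\gamma$, which occurs for these groups only when $S=\PSL_n(q)$ with $3\mid(q+1)$ (and in the symplectic/orthogonal cases), I would instead use that the $3$-part of $P$ lies in a non-split torus whose relative Weyl group contains $-1$; this is exactly where the hypothesis $3\mid(q+\epsilon)$ and the torus descriptions from Proposition \ref{theo:psl} and Proposition \ref{theo:classical} enter. The action of $\gamma$ on $P_T$ is inversion composed with a Weyl element, and for such a torus inversion is realized inside $N_{\tilde S}(P)$, so $\gamma$ differs from an element of $N_{\tilde S}(P)$ by an element of $C_A(P)$.

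The hard part is to produce a single $P$ that works simultaneously for all the generators of $A/(A\cap\tilde S)$, together with the graph-automorphism analysis. For the first point I would take iterated fixed points $C_{\tilde S}(\langle\phi,\gamma,\dots\rangle)$ — these automorphisms may be chosen to commute — and apply the order-polynomial argument repeatedly, so that all field automorphisms act trivially on the common $P$ while the graph automorphism is realized inside $N_{\tilde S}(P)$. The most delicate step, needing case-by-case verification across families (1)--(6), is checking that for the chosen $\gamma$-stable torus the induced automorphism genuinely lies in the image of $N_{\tilde S}(P)$ in $\Aut(P)$; this is precisely where the distinction between $3\mid(q-\epsilon)$ and $3\mid(q+\epsilon)$ is indispensable, since for a split torus of type $A$ the element $-1$ need not belong to the Weyl group and the argument would break down.
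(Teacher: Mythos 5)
Your reduction to $A=(A\cap\tilde S)\,C_A(P)$ and the Frattini step are fine, and your overall strategy (make the field automorphisms centralize $P$ by putting $P$ inside a fixed-point subgroup, then absorb the graph automorphism into $N_{\tilde S}(P)$) is a genuinely different route from the paper's. But as written there are two real gaps. First, the pivotal claim $|\tilde S_0|_3=|\tilde S|_3$ for $\tilde S_0=C_{\tilde S}(F_{q_0})$ is not established by the argument you give: lifting-the-exponent yields $v_3(q_0^{mk}-1)=v_3(q_0^{k}-1)+v_3(m)$ only when $3\mid q_0^{k}-1$, and precisely when the ``sign flips'' (e.g.\ $3\mid q_0+1$, $k$ odd, $m$ even) one gets $v_3(q^{k}-1)>v_3(q_0^{k}-1)=0$, so the $3$-parts need \emph{not} agree in general. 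The equality does hold for the listed groups, but only because their order polynomials contribute $3$-parts through factors $q^{2i}-1$ (types $B,C,D,G_2$) or because the hypothesis $3\mid(q+\epsilon)$ forces $m$ odd in type $A$ -- and this verification, together with identifying $C_{\tilde S}(F_{q_0})$ for the twisted groups $\mathrm{PSU}_n$, $\mathrm{P}\Omega_8^-$, ${}^3\mathrm{D}_4$, ${}^2\mathrm{F}_4$ (where the fixed-point subgroup can change type), is missing. Second, the graph-automorphism step and the construction of a single $P$ working for all generators are exactly the content of the proposition, and you defer them to ``case-by-case verification''; moreover you only discuss the action on $P_T$, whereas one must also control the action on $P_W$ (which is nontrivial for $\PSp_6$, $\mathrm{P}\Omega_7$, $\mathrm{P}\Omega_8^-$, $G_2$): centralizing or normalizing $P_T$ does not by itself give an element of $C_A(P)$.

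For comparison, the paper avoids the order-polynomial computation entirely. It writes $P=P_T\rtimes P_W$ via \cite[Thm.~4.10.2]{GLS}, observes that a field automorphism of order prime to $3$ acts on each cyclic direct factor $C_{3^b}$ of $P_T$ through $\Aut(C_{3^b})$, whose $3'$-part has order $2$, hence acts coordinatewise either trivially or by inversion (and commutes with $P_W$). When $3\mid(q-1)$ it then exhibits explicit matrices $g\in\tilde S$ inverting $P_T$ and centralizing $P_W$, so that $gF_0,\,g\tau\in C_A(P)$; when the torus is nonsplit it puts the $2\times2$ blocks of $P_T$ in rational canonical form to see that inversion is impossible, so the field automorphism already centralizes. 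That mechanism is more elementary and localizes all the case analysis in a short list of explicit elements; your approach could be completed, but the two deferred steps above are where the actual work lies.
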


\begin{proof}
Note that (6) was shown in \cite[Proposition 5.17]{sf}, so we need only consider (1) through (5). A special case of the remaining  statements was considered in loc. cit., but we include a full proof for completeness.
    Using \cite[Theorem. 4.10.2]{GLS} we see that $P$ is of the form $P_T \rtimes P_W$, where $P_T$ is the Sylow $3$-subgroup of a torus $T\le \tilde{S}$ and $P_W$ is isomorphic to an element of $ \Syl_3(N_{\tilde{S}}(T)/T)$. We also have that we can take coset representatives of  $A/(\tilde{S}\cap A)$  that are either a field automorphism, a graph automorphisms or the product of a field automorphism with a graph automorphism. Therefore, it is sufficient to find $g \in \tilde{S}$ such that $gF_0\in C_A(P)$ and $g\tau \in C_A(P)$, where $F_0$ is a generator of the cyclic groups of field automorphisms of order prime to $3$ and $\tau$ is a possibly trivial graph automorphism. Let $q = p^{m3^a}$, where $3 \nmid m $. Since $3\nmid |A/(\tilde S\cap A)|$, we have that any field automorphism coset representative of  $|A/S|$ must be of the form $F_p^{d3^a}$, where $d | m$. We have that $F_p^{d3^a}$ commutes with $P_W$. We also have that $P_T$ is the direct product of cyclic groups of $3$-power order and further that $F_p^{d3^a}$ acts coordinate by coordinate. Since $|\Aut(C_{3^b})| = 2\cdot 3^{b-1}$ and $3 \nmid|F_p^{d3^a}|$, we see that $F_p^{d3^a}$ either acts trivially on $P_T$ or by inversion. 
    
    We first consider the case where  $3|(q-1)$ and $S \neq \PSL^{-1}_n(q)$.  In this case we see that, if they act non-trivially, both $F$ and $\tau$ act by inversion on $P_T$ and commute with $P_W$, so it is then sufficient to find $g \in \tilde{S}$ that acts as inversion on $P_T$ and commutes with $P_W$. Considering the structure descriptions given in \cite{weir} we have that, given appropriate choice of basis, the images of 

    $$
    \begin{pmatrix}
0&1\\ 
1&0
\end{pmatrix},
   \begin{pmatrix}
0&I_n\\ 
I_n&0
\end{pmatrix},
    \begin{pmatrix}
0&I_3& 0\\ 
I_3&0& 0\\
0&0&-1\\
\end{pmatrix},
\text{and}
\begin{pmatrix}
0&I_3& 0&0\\ 
-I_3&0&0&0\\
0&0&-1&0\\
0&0&0&-1
\end{pmatrix}
    $$
    under projection map $\pi:G \rightarrow G/\mathbf{Z}(G) = S$ suffice for (1),(3),(4)
and (5) respectively.

We now consider all remaining cases. We have that $P_T$ is of a block diagonal form $\mathrm{diag}(A_1,...,A_k)$, where each $A_i$ is a $2\times 2$ matrix with eigenvalues $\zeta,\zeta^{-1}$ with $|\zeta|= 3^b$ for some $b$. A straightforward calculation gives that the rational canonical form of such a block is 
$
\begin{pmatrix}
    0&-1\\
    1&\zeta+\zeta^{-1}
\end{pmatrix}
$. 
Thus, if we chose a basis in which the standard generators of the $P_T$ are in rational canonical form, it is easy to see that $F$ cannot act by inversion and must act as identity. As for the situations in which $A$ induces a graph automorphism $\tau$ of order 2, which acts as inverse transpose, we can explicitly find $g\in \tilde{S}$ such that  $g\tau\in C_A(P)$. 
\end{proof}

\begin{lemma}\label{index2}
    Let $S = \PSL^{\epsilon}_n(q)$, $P\in \Syl_3(S)$ and $S\le A \le \Aut(S)$ such that $3\nmid |A/S|$. Then $[A: A\cap \tilde S C_A(P)]\le 2$. 
\end{lemma}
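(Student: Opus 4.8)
The plan is to reduce the statement to a computation inside the group of field and graph automorphisms, which is abelian of order prime to $3$, and then to pin down an index-$\le 2$ subgroup of it whose classes centralize a Sylow $3$-subgroup. First, since $\tilde S\normal\Aut(S)$ and $C_A(P)\le A$, the product $\tilde S\,C_A(P)$ is a subgroup of $\Aut(S)$, and the Dedekind modular law gives
\[
A\cap\tilde S\,C_A(P)=(A\cap\tilde S)\,C_A(P).
\]
Writing $K:=A\cap\tilde S\normal A$ and $\bar A:=A/K$, and letting $\overline{C_A(P)}$ denote the image of $C_A(P)$ in $\bar A$, we get $[A:A\cap\tilde S\,C_A(P)]=[\bar A:\overline{C_A(P)}]$, so it suffices to show $\overline{C_A(P)}$ has index at most $2$ in $\bar A$. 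Note $\bar A\cong A\tilde S/\tilde S$ embeds into $\Aut(S)/\tilde S$, the abelian group generated by field and graph automorphisms, and $3\nmid|\bar A|$ since $3\nmid|A/S|$.

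Next I would set up a sign homomorphism. Decompose $P=P_T\rtimes P_W$ as in the discussion opening Section 3. As in the proof of Proposition \ref{ad}, the decomposition may be chosen so that the relevant field and graph automorphisms normalize $P$, centralize $P_W$, and act on each cyclic factor of the homocyclic torus part $P_T$ through an automorphism of $3'$-order of $C_{3^b}$, hence by $+1$ or $-1$. Because both field and graph automorphisms then act on $P_T$ by a global scalar $\pm1$ and centralize $P_W$, the induced action of $\bar A$ on $P_T$ defines a homomorphism $\psi\colon\bar A\to\{\pm1\}$. Its kernel $\bar A_0$ has index at most $2$ in $\bar A$, and every class in $\bar A_0$ acts trivially on both $P_T$ and $P_W$, hence centralizes $P$.

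It then remains to prove $\bar A_0\subseteq\overline{C_A(P)}$. By the Frattini argument applied to $S\normal A$ with $P\in\Syl_3(S)$, every class of $\bar A$ has a representative in $N_A(P)$. Given $\alpha\in\bar A_0$, choose $x\in N_A(P)$ representing it; since the field/graph part of $x$ centralizes $P$, the element $x$ acts on $P$ exactly as conjugation by an inner-diagonal element $t\in N_{\tilde S}(P)$. Using the explicit torus and anti-diagonal elements of $\tilde S$ produced in the proof of Proposition \ref{ad}, which centralize $P_W$ and realize the required actions on $P_T$, together with $N_S(P)$, one finds $k\in N_K(P)$ with $kx\in C_A(P)$, so that $\alpha\in\overline{C_A(P)}$ and the desired bound follows.

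The hard part is this last realization step: one must check that the inner-diagonal automorphism of $P$ induced by $x$ is already induced by an element of $K=A\cap\tilde S$, and not merely by $\tilde S$. The obstruction is precisely the global inversion of a split maximal torus, which in type $A$ with $3\mid(q-1)$ is induced by the graph automorphism but by no element of $\tilde S$; these are exactly the classes detected by $\psi$, namely those outside $\bar A_0$. This is why $\psi$ may be nontrivial, accounting for the possible index $2$, while the cases in which the graph automorphism is absent or can be absorbed by an inner-diagonal correction, treated in Proposition \ref{ad}, yield index $1$.
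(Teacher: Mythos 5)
Your argument is essentially the paper's: both reduce, via the decomposition $P=P_T\rtimes P_W$ and the observation from Proposition \ref{ad} that the relevant $3'$-order field and graph automorphisms centralize $P_W$ and act on $P_T$ by $\pm1$, to the fact that the classes acting by $+1$ form a subgroup of index at most $2$ lying in $\tilde S C_A(P)$ --- the paper phrases this as a case analysis on whether $F_0$, $F_0^2$, or $F_0\tau$ centralizes $P$, while you package the same dichotomy as a sign homomorphism on $A/(A\cap\tilde S)$. Your closing containment $\bar A_0\subseteq\overline{C_A(P)}$ is only sketched, but that realization step is precisely what the paper also delegates to ``arguing as in the above proposition,'' so the two proofs carry the same content at the same level of detail.
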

\begin{proof}
    We have that $A/(\tilde S\cap A)$ is generated by field automorphisms and graph automorphisms. Arguing as in the above proposition, we have that, for a field automorphism $F_0$,  either $F_0$ or $F_0^2$ is an element of $C_A(P)$. If $F_0 \in C_A(P)$, then we have that $A\cap \tilde SC_A(P)$ must have at most one nontrivial coset with representative a graph automorphism $\tau$. If $F_0 \notin C_A(P)$ and $\tau \in A$, then $F_0\tau\in A\cap \tilde SC_A(P)$, meaning they lie in the same coset and again  $A\cap \tilde SC_A(P)$ must have only one nontrivial coset.
\end{proof}

\begin{proposition}\label{exceptional}
    Let $S$ be an exceptional simple group of Lie type and let $S\le A \le \Aut(S)$ such that $3\nmid |A/S|$. Then Theorem A holds for $A$.
\end{proposition}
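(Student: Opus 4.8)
The plan is to split into the two cases provided by Proposition \ref{theo:classical}, according to whether $P\in\Syl_3(S)$ is $2$-generated. Since $3\nmid|A/S|$ we have $\Syl_3(A)=\Syl_3(S)$, so $A$ is $2$-generated exactly when $S$ is, and in either case $P$ is non-cyclic. Recalling that the Suzuki groups have order prime to $3$, that the Ree groups ${}^2G_2(q^2)$ are treated in Proposition \ref{defchar}, and that ${}^2F_4(2)'$ is treated in Proposition \ref{gapstuff}, the remaining exceptional groups divide into the $2$-generated groups $G_2(q)$, ${}^3D_4(q)$, ${}^2F_4(q^2)'$ and the non-$2$-generated groups $F_4(q)$, $E_6^{\epsilon}(q)$, $E_7(q)$, $E_8(q)$.

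For the $2$-generated groups I would first note that each of $G_2(q)$, ${}^3D_4(q)$, ${}^2F_4(q^2)'$ has no nontrivial diagonal automorphisms, so that $\tilde S=S$. Proposition \ref{ad}, whose hypotheses hold since these are exactly the groups in its item (6), then gives $\tilde S A=\tilde S C_A(P)$, which because $\tilde S=S\le A$ reads $A=SC_A(P)$ with $P\in\Syl_3(A)$. Thus Theorem \ref{ald} applies with $N=S$ and $G=A$, so restriction is a bijection $\Irr_{0,\sigma}(B_0(A))\to\Irr_{0,\sigma}(B_0(S))$ and $k_{0,\sigma}(B_0(A))=k_{0,\sigma}(B_0(S))$. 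As $S$ is $2$-generated, \cite[Theorem 3.3]{sf} gives $k_0(B_0(S))\le 9$, and since $P$ is non-cyclic Lemma \ref{lessthan9} forces $k_{0,\sigma}(B_0(S))\in\{6,9\}$. Both sides of the equivalence in Theorem A then hold, finishing this case.

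For $F_4(q)$, $E_6^{\epsilon}(q)$, $E_7(q)$, $E_8(q)$ the Sylow $3$-subgroups are not $2$-generated, so I must instead show $k_{0,\sigma}(B_0(A))\notin\{6,9\}$. By \cite[Lemma 1.4]{RSV20} we have $3\mid k_{0,\sigma}(B_0(A))$, and since $P$ is non-cyclic \cite[Theorem A]{RSV20} excludes the value $3$; hence it suffices to exhibit at least $10$ members of $\Irr_{0,\sigma}(B_0(A))$, as then $k_{0,\sigma}(B_0(A))\ge 12>9$. Characters lying in distinct $\Aut(S)$-orbits of $\Irr_{0,\sigma}(B_0(S))$ lie in distinct $A$-orbits, and by Lemma \ref{3prime} they lift to pairwise distinct members of $\Irr_{0,\sigma}(B_0(A))$, since a character of $A$ lies over a single $A$-orbit of $\Irr(S)$; thus it is enough to produce $10$ such orbits in $\Irr_{0,\sigma}(B_0(S))$. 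Following the strategy of Proposition \ref{theo:deg}, the unipotent characters are $\sigma$-fixed by \cite[Lemma 4.7]{mmsv} and lie in pairwise distinct $\Aut(S)$-orbits by \cite[Theorem 2.5]{malle08}, so I would count the $3'$-degree unipotent characters in the principal block from the known unipotent degrees and the block distribution, supplementing them, if they fall short of $10$, with $\sigma$-fixed semisimple characters drawn from \cite[Theorem 3.4]{GSV} exactly as in Proposition \ref{theo:deg} (using \cite[Proposition 3.3.15]{GM20} for $\sigma$-invariance and \cite[Lemma 17.2]{CE04} to place their deflations in the principal block).

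I expect this last count to be the main obstacle. Since $3$ is a bad prime for each of $F_4$, $E_6$, $E_7$, $E_8$, determining which unipotent characters lie in the principal $3$-block and which of those have $3'$-degree is delicate and depends on whether $3\mid(q-1)$ or $3\mid(q+1)$, so the bookkeeping must be organized by type and by the congruence class of $q$ modulo $3$, reading the relevant degrees from the tables. The expectation, consistent with these groups having Sylow $3$-subgroups that require at least three generators, is that the threshold of $10$ orbits is comfortably met in every case, so that $k_{0,\sigma}(B_0(A))\ge 12$ and Theorem A holds.
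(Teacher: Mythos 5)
Your overall architecture matches the paper's: split by Proposition \ref{theo:classical} into the $2$-generated families $G_2(q)$, ${}^3D_4(q)$, ${}^2F_4(q^2)'$ (reduce to $S$ via Proposition \ref{ad} and Theorem \ref{ald}, then apply Lemma \ref{lessthan9}) and the remaining families $F_4(q)$, $E_6^{\epsilon}(q)$, $E_7(q)$, $E_8(q)$ (exhibit at least $10$ distinct $\Aut(S)$-orbits in $\Irr_{0,\sigma}(B_0(S))$ and lift via Lemma \ref{3prime}). But in the second half your argument stops exactly where the real work begins: you never actually produce the $10$ characters for $F_4$, $E_6^{\epsilon}$, $E_7$, $E_8$, only the ``expectation'' that the count of $3'$-degree unipotent characters in the principal block is comfortably met. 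That count is the entire content of this case, and, as you yourself note, $3$ is a bad prime for these types, so it cannot be waved at. The paper closes this by citing the proof of \cite[Proposition 5.21]{sf}, which exhibits $10$ unipotent characters of $3'$-degree in the principal block lying in distinct $\Aut(S)$-orbits; combined with \cite[Lemma 4.7]{mmsv} (which you do invoke for $\sigma$-fixedness of unipotent characters) and Lemma \ref{3prime}, this finishes the case. Without that citation or an explicit tabulation, your proof is incomplete at its key step.

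A smaller issue: in the $2$-generated case you derive $k_0(B_0(S))\le 9$ from \cite[Theorem 3.3]{sf}. That theorem is phrased in terms of $|P:P'|$, whereas $2$-generation gives only $|P:\Phi(P)|=9$, which is a priori weaker than $|P:P'|=9$ since $P'\le\Phi(P)$; so the implication is not immediate as stated. The paper instead reads $k_0(B_0(S))\le 9$ directly from the block-theoretic literature for these three families (\cite{HiSh90} for $G_2(q)$, \cite{DM87} for ${}^3D_4(q)$, \cite{Mal90} for ${}^2F_4(q^2)'$). The rest of that case --- $\tilde S=S$, hence $A=SC_A(P)$ by Proposition \ref{ad}, hence the restriction bijection of Theorem \ref{ald}, hence Lemma \ref{lessthan9} --- is correct and is exactly the paper's route.
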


    \begin{proof}
Recall that $3\nmid |\,^2\operatorname{B}_2(q^2)| $  and $\,^2\operatorname{G}_2(q^2)$ is only defined in characteristic $3$ and was considered in Proposition \ref{defchar}, so we need not consider them here. If $S$ is one of $\operatorname{F}_4(q),\operatorname{E}_6^{\epsilon}(q),\operatorname{E}_7(q)$, or $\operatorname{E}_8(q)$, Proposition \ref{theo:classical} gives that $A$ has Sylow $3$-subgroups which are not $2$-generated. Thus, it is sufficient to show $k_{0,\sigma}(B_0(A))\ge 10$. The proof of \cite[Proposition 5.21]{sf} exhibits $10$ unipotent characters in $k_{0}(B_0(S))$ in distinct $\Aut(S)$-orbits. Further, these characters are $\sigma$-fixed by \cite[Lemma 4.7]{mmsv}. This and Lemma \ref{3prime} together give the desired result.

 Now let $S$ be one of $\operatorname{G}_2(q),\,^4\operatorname{D}_3(q)$ or  $\,^2\operatorname{F}_4(q^2)'$. We have that \cite{HiSh90}, \cite{DM87} and \cite{Mal90} respectively give $k_0(B_0(S))\le 9$. Further, Proposition \ref{theo:classical} gives that these groups have $2$-generated Sylow subgroups. Therefore, Lemma \ref{lessthan9} gives the result in the case of $S=A$. If we instead have $A > S$, then Proposition \ref{ad}, Lemma \ref{tilde} and Theorem \ref{ald} give the desired result. 
    \end{proof}

\begin{proposition}\label{psl3}
    Let $S = \PSL^{\epsilon}_3(q)$ with $3|(q-\epsilon)$ and let $S\le A \le \Aut(S) $. Then Theorem A holds for $A$ 
\end{proposition}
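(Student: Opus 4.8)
The plan is to reduce to the $2$-generation classification already obtained and then to compute $k_{0,\sigma}(B_0(A))$ in each resulting family, the guiding difficulty being that $S=\PSL^{\epsilon}_3(q)$ with $3\mid(q-\epsilon)$ is the unique Lie-type case of Theorem A in which $3$ divides $|\tilde{S}/S|=3$. Because of this, Proposition \ref{ad} deliberately omits this $S$, Lemma \ref{tilde} does not apply, and the clean Alperin--Dade bijection of Theorem \ref{ald} is not directly available. First I would write $B=A\cap\tilde{S}\in\{S,\tilde{S}\}$ and split the groups $S\le A\le\Aut(S)$ according to a Sylow $3$-subgroup of $A/S$, invoking Propositions \ref{noncyclic} and \ref{2or3}: $A$ has $2$-generated Sylow $3$-subgroups exactly when this Sylow $3$-subgroup is trivial, is $\langle\delta\rangle$ for an outer diagonal automorphism $\delta$, or is cyclic generated by a diagonal-times-field element $\delta F_0$, and it fails to be $2$-generated precisely in the field-only case $\langle F_0\rangle$ and in the non-cyclic case. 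Theorem A then amounts to showing $k_{0,\sigma}(B_0(A))\in\{6,9\}$ in the first family and $k_{0,\sigma}(B_0(A))\ge 10$ in the second; the latter suffices because $3\mid k_{0,\sigma}(B_0(A))$ by \cite[Lemma 1.4]{RSV20}, and in both families the Sylow $3$-subgroups are non-cyclic, so the lower bound $k_{0,\sigma}(B_0(A))\ge 6$ is automatic as in the proof of Lemma \ref{lessthan9}.

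For the base case $A=S$ I would establish $k_0(B_0(S))\le 9$ from the principal-block data used in \cite[Section 5]{sf} (the defect group is $C_3\times C_3$ when $3\,\|\,(q-\epsilon)$ and is $2$-generated in general by Proposition \ref{theo:psl}); since the Sylow $3$-subgroups are non-cyclic, Lemma \ref{lessthan9} then yields $k_{0,\sigma}(B_0(S))\in\{6,9\}$. When $3\nmid|A/S|$, so that $A\cap\tilde{S}=S$ and $A/S$ is generated by field and graph automorphisms of order prime to $3$, I would prove $A=S\mathbf{C}_A(P)$ for $P\in\Syl_3(A)$ directly, by the method of Proposition \ref{ad}, exhibiting inner-diagonal elements whose products with the relevant field and graph automorphisms centralize $P$, and then transport the count from the case $A=S$ via Theorem \ref{ald}. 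The genuinely delicate $2$-generated cases are those in which $3\mid|A/S|$ through a diagonal automorphism, namely $A=\tilde{S}=\operatorname{PGL}^{\epsilon}_3(q)$ and $A=S\langle\delta F_0\rangle$: here $k_0(B_0(A))$ may exceed $9$ once extension multiplies the $A$-invariant characters, so Lemma \ref{lessthan9} is unavailable, and I would instead count $\Irr_{0,\sigma}(B_0(A))$ directly through Deligne--Lusztig theory, enumerating the semisimple $3$-elements $s$ of $G^*$ and their $\sigma$- and $A$-orbits, and showing that the diagonal automorphism permutes the non-unipotent Lusztig series in such a way that exactly nine (or six) $\sigma$-fixed height-zero characters survive.

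For the not-$2$-generated families I would reach $k_{0,\sigma}(B_0(A))\ge 10$ by an extension argument built on the three rational unipotent characters of $S$ in $B_0(S)$ supplied by Lemma \ref{4unipotents}; each is $\Aut(S)$-invariant, extends to $A$, and by \cite[Corollary 6.4]{Nav18} admits a $\sigma$-fixed extension. Gallagher's theorem then multiplies each such extension by the $\sigma$-fixed linear characters of $A/S$ of $3$-power order, i.e.\ those of order dividing $3$. In the field-only case $A/S$ has cyclic $3$-part $C_{3^a}$, yielding three such linear characters and hence nine $\sigma$-fixed principal-block characters, after which a single suitably chosen $A$-invariant non-unipotent $\sigma$-fixed character, a semisimple-character deflation as in Proposition \ref{theo:deg} whose defining semisimple class is field-stable, supplies the tenth. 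In the non-cyclic case $A/S$ has $3$-part $C_3\times C_{3^a}$, giving nine $\sigma$-fixed linear characters of order dividing $3$ and therefore at least twenty-seven $\sigma$-fixed characters.

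The hard part will be exactly what Proposition \ref{ad} and Lemma \ref{tilde} are designed to circumvent elsewhere: since $3\mid|\tilde{S}/S|$, the passage among $\Irr_{0,\sigma}(B_0(S))$, $\Irr_{0,\sigma}(B_0(\tilde{S}))$ and $\Irr_{0,\sigma}(B_0(A))$ must be carried out by hand across the order-$3$ diagonal cover. Concretely, I must control how $\sigma$ permutes the constituents of restriction and induction over this $C_3$ in coordination with the diagonal action, decide which characters are genuinely $A$-invariant and admit $\sigma$-equivariant extensions, and rule out any over- or under-counting of the surviving $\sigma$-fixed characters. This is where the Deligne--Lusztig description of the series $\mathcal{E}(G,s)$, the connectedness of $\mathbf{C}_{\mathbf{G}^*}(s)$ via \cite[Proposition 14.20]{MT11}, and the field-of-values results \cite[Proposition 3.3.15]{GM20} and \cite[Proposition 3.3]{fieldofvalues} will be needed to pin down the Galois action on both sides of the cover and to force the count in the diagonal $2$-generated cases to be exactly six or nine.
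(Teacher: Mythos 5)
Your case division by the isomorphism type of a Sylow $3$-subgroup of $A/S$ matches the paper's, and your treatment of the base case $A=S$ and of the non-$2$-generated families is broadly in the spirit of the actual proof. However, there is a genuine gap in the case $3\nmid|A/S|$: you assert you would prove $A=S\mathbf{C}_A(P)$ directly ``by the method of Proposition \ref{ad}'' and then transport the count via Theorem \ref{ald}. This equality is false in general for $S=\PSL_3^{\epsilon}(q)$ with $3\mid(q-\epsilon)$ --- which is exactly why this $S$ is excluded from the list in Proposition \ref{ad}. A field automorphism of order prime to $3$ can act by inversion on $P_T$ (for instance $F_p$ on $\PSL_3(p^2)$ with $p\equiv 2\pmod 3$), and in type $A_2$ the Weyl group does not contain $-1$, so no inner-diagonal element inverts the rank-two torus part $P_T$; hence $gF_0\notin\mathbf{C}_A(P)$ for every $g$, and $[A:S\mathbf{C}_A(P)]=2$ genuinely occurs. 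The paper's Lemma \ref{index2} only gives index at most $2$, and its proof of this proposition must then run a separate Clifford-theoretic count: four of the six characters in $\Irr_{0,\sigma}(B_0(S\mathbf{C}_A(P)))$ are $A$-invariant and extend (giving $8$ by Gallagher's theorem) while the remaining two are interchanged (giving $1$ by Clifford correspondence), landing on exactly $9$. Your proposal has no mechanism for this subcase.

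Second, the cases you yourself flag as genuinely delicate --- $\tilde{S}\le A$ and $A=S\langle\delta F_0\rangle$ --- are left as an intention to ``count directly through Deligne--Lusztig,'' which is precisely the content that must be supplied. The paper resolves them more cheaply than you anticipate: for the diagonal case it quotes $k_0(B_0(A))=9$ from \cite[Proposition 5.19]{sf}, so Lemma \ref{lessthan9} is in fact available and your worry that $k_0(B_0(A))$ might exceed $9$ does not materialize; and for the $\delta F_0$ case it shows $A=S\langle\delta F_0\rangle\mathbf{C}_A(P)$ always holds, because an index-$2$ coset acting by inversion would force $\delta\in A$ and contradict the cyclicity of the Sylow $3$-subgroup of $A/S$, whence Theorem \ref{ald} applies directly. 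Until the Deligne--Lusztig enumeration you gesture at is actually carried out (identifying the relevant semisimple classes, their $\sigma$-stability via \cite[Proposition 3.3.15]{GM20}, and the fusion of the three non-unipotent characters under the diagonal action), these cases remain unproved in your write-up.
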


\begin{proof}
      From Proposition \ref{4unipotents} we obtain $3$ unipotent characters in $k_0(B_0(S))$ and further from the generic character table found in \cite{SF73} we see that there are $3$ additional characters in $k_0(B_0(S))$. Thus, \cite[Theorem A]{RSV20} implies $k_{0,\sigma}(B_0(S)) = 6$, since the Sylow $3$-subgroups of $S$ are not cyclic by Proposition \ref{theo:psl}. Thus, the result holds in the case of $A=S$ by proposition \ref{2or3}.

      In order to show the case where $A>S$ we will first discuss the $\Aut(S)$-action on $\Irr_{0,\sigma}(B_0(S))$. Since the 3 unipotent characters extend to $\Aut(S)$, they must be invariant under the action $\Aut(S)$. Using \cite[Proposition 5.12]{mal20}  and \cite[Proposition 2.13]{MO83}  we obtain $k_0(B_0(\tilde S)) = 9$. Gallagher's Theorem and the fact that $|\tilde{S}/S|=3$ yields $9$ characters above the $3$ unipotent characters. Thus, there cannot be any $3'$-degree characters in $\Irr_0(B_0(\tilde S))$ that lie above the non-unipotent characters. It follows that the $3$ remaining characters must be interchanged by the action of the outer diagonal automorphisms. Now, in order to consider the action of field and graph automorphisms, let $\hat{S} = S \rtimes \langle \tau,F_p\rangle$, where $\tau$ is a graph automorphism in the case of $S=\PSL_3(q)$, $\tau$ is trivial in the case of $S=\operatorname{PSU}_3(q)$, and $F_p$ is a generator of the subgroup of field automorphisms. We can argue as in the proof of Lemma \ref{index2} to see that $[\hat{S}:SC_{\hat{S}}(P)] \le 2$, where the non-trivial coset can be taken to act by inversion on $P_T$. Then, using Theorem \ref{ald}, we obtain that restriction defines a bijection between $\Irr_{0}(SC_{\hat{S}}(P))$ and $\Irr_{0}(S)$. This implies that the inertial group in $\hat{S}$ of the non-unipotent characters in $\Irr_{0}(B_0(S))$ has index at most $2$ in $\hat S$ and we see further from the character table in \cite{SF73} that in the case that the index equals $2$ the $\hat{S}$ action interchanges $2$ of the non-unipotent characters.

     We first consider the case where $3\nmid |A/S|$. If $SC_A(P) = A$, then Theorem \ref{ald}, Proposition \ref{2or3} and 
    Lemma \ref{lessthan9} give the desired result and further that restriction defines a bijection between $k_{0,\sigma}(B_0(S))$ and $k_{0,\sigma}(B_0(A))$. Assume $[A:SC_A(P)] = 2$. Using \cite[Lemma 2.1]{sf} we see that the principal block of $A$ is the unique block lying above the principal block of $SC_A(P)$. As discussed above we see that $4$ of the characters in $\Irr_{0,\sigma}(B_0(SC_A(P)))$ are $A$-invariant and $2$  are interchanged. We then have that the $4$ fixed characters extend to $A$ and Gallagher's Theorem gives $8$ characters lying above them. Clifford correspondence gives one character lying above the $2$ interchanged characters. Thus, $\Irr_{0}(B_0(A)) = 9$ and Lemma \ref{lessthan9} give the result.

We now consider the case in which $3$ divides $|A/S|$. First assume $A/S$ has $2$-generated Sylow $3$-subgroups. In this case we see that, since $|\tilde{S}/S|=3$ and $A/(A\cap\tilde{S})$ is abelian, $A/S$ must have a normal Sylow $3$-subgroup. Let $X$ be the preimage of this Sylow $3$-subgroup of $A/S$ under the canonical projection map and let  $\theta_1,\theta_2,\theta_3$ be the characters described in Proposition \ref{4unipotents} restricted to $S$. Since each $\theta_i$ extends to $\Aut(S)$, we in particular have that each $\theta_i$ extends to $X$. Since $X/S$ is a $2$-generated $3$-group, we see it has at least $9$ $\sigma$-fixed characters corresponding to the inflations of the characters of $(X/S)/\Phi(X/S)$. We can apply \cite[Corollary 6.4]{Nav18} to obtain a
 $\sigma$-fixed extension of $\theta_i$ in $\Irr(X)$ and then apply Gallagher's Theorem to obtain such  $9$ $\sigma$-fixed extensions. Since the $\theta_i$ are in distinct orbits, this gives $k_{0,\sigma}(B_0(X))\ge27$. Then, if $XC_A(P) = A$,  Theorem \ref{ald} alongside Proposition \ref{2or3} gives the desired result. If instead we have  $[A:XC_A(P)] = 2$ (this is the only other possibility since $SC_A(P)\le XC_A(P)$ and $[A:SC_A(P)] \le 2$ by the above), Theorem \ref{ald} gives that $k_{0,\sigma}(B_0(XC_A(P))) \ge 27$. Using \cite[Lemma 2.1]{sf} we see that the principal block of $A$ is the unique block lying above the principal block of $SC_A(X)$. Each character in $\Irr_{0,\sigma}(B_0(A))$ lies above at most $2$ characters in $\Irr_{0,\sigma}(B_0(SC_A(P)))$. This and \cite[Lemma 3.5]{mmsv} imply $k_{0,\sigma}(B_0(A))  > 13$. This and Proposition \ref{2or3} give the result.

 Now Let $A > S$ such that the Sylow $3$-subgroups of $A/S$ are  cyclic and generated by a field automorphism. In this case we note that $A/S$ has a normal $3$-complement $M$. We may argue as in the first case to obtain $k_{0,\sigma}(B_0(M)) \in \{6,9\}$. In either case we obtain at least $4$ $A$-invariant characters. Arguing as in   Proposition \ref{allbutpsl3} we obtain $k_{0,\sigma}(B_0(A)) \ge 12 $.

Next we consider the case in which $A > S$ such that the Sylow $3$-subgroups of $A/S$ are cyclic and generated by an outer diagonal automorphism. From \cite[Proposition 5.19]{sf} we have $k_{0}(B_0(A)) = 9$ and by Proposition \ref{2or3} and Lemma \ref{lessthan9} we have that Theorem A holds. 

Lastly we consider the case in which $A > S$ such that the Sylow $3$-subgroups of $A/S$ are cyclic and generated by $\delta F_0$ where $\delta$ is an  outer diagonal automorphism and $F_0$ is a field automorphism for $3$-power order. We see that the action of $\langle\delta F_0\rangle$ fuses the $3$ non-unipotent characters and, arguing as above, we obtain $9$ characters lying above the $3$ unipotent characters in $S\langle\delta F_0\rangle$. Arguing as in the proof of Lemma \ref{index2} we obtain $[A:S\langle\delta F_0\rangle C_A(P)]\le 2$; however, if $[A:
S\langle\delta F_0\rangle C_A(P)]= 2$ we see that the representative for the nontrivial coset will invert $\delta$ and commute with $F_0$. Thus, we have $\delta^{-1}F_0 \in A$ and $\delta =\delta^{-1}F_0F_0^{-1}\delta^{-1} \in A$. This gives $A/S$ having $2$-generated Sylow subgroups which contradicts the assumption. Therefore, $A=S\langle\delta F_0\rangle C_A(P)$ and Theorem \ref{ald} gives the result.
\end{proof}

\begin{theorem}
    \label{theo:exceptions}
   If $S$ is a simple group of Lie type defined in characteristic apart from $3$ and  $S\le A \le \Aut(S)$ such that $3\nmid |A/S|$, then Theorem A 
 holds for $A$. \end{theorem}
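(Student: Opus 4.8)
The plan is to prove Theorem~\ref{theo:exceptions} as a single case analysis over the isomorphism type of the socle $S$, feeding each case into the proposition already dedicated to it. The key simplifying observation is that, since $3\nmid|A/S|$, a Sylow $3$-subgroup of $A$ is already a Sylow $3$-subgroup of $S$; hence $A$ has $2$-generated Sylow $3$-subgroups if and only if $S$ does, and by Proposition~\ref{theo:classical} this occurs precisely for the families (i)--(vi) listed there. So Theorem~A for $A$ reduces to showing that $k_{0,\sigma}(B_0(A))\in\{6,9\}$ exactly when $S$ lies in one of those families, and I would organize the proof according to whether or not $S$ has $2$-generated Sylow $3$-subgroups.

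First I would dispose of the socles whose Sylow $3$-subgroups are \emph{not} $2$-generated, where the aim is to show $k_{0,\sigma}(B_0(A))\notin\{6,9\}$. If the Sylow $3$-subgroup of $S$ (equivalently of $A$) is cyclic, as happens for $\PSL_2(q)$ and for $\PSL^{\epsilon}_3(q)$ with $3|(q+\epsilon)$, then \cite[Theorem A]{RSV20} gives $k_{0,\sigma}(B_0(A))=3\notin\{6,9\}$. If instead the Sylow $3$-subgroup is non-cyclic but requires at least three generators, then $S$ is one of the large classical groups of Proposition~\ref{theo:deg} or one of $\operatorname{F}_4(q),\operatorname{E}_6^{\epsilon}(q),\operatorname{E}_7(q),\operatorname{E}_8(q)$ treated in Proposition~\ref{exceptional}; in each case those propositions already yield $k_{0,\sigma}(B_0(A))\ge 10$, so the count again lies outside $\{6,9\}$.

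It then remains to treat the families (i)--(vi) with $2$-generated Sylow $3$-subgroups, where I must show $k_{0,\sigma}(B_0(A))\in\{6,9\}$. The socle $\PSL^{\epsilon}_3(q)$ with $3|(q-\epsilon)$ is exactly Proposition~\ref{psl3}, and the exceptional cases $\operatorname{G}_2(q)$, ${}^3\operatorname{D}_4(q)$, ${}^2\operatorname{F}_4(q^2)'$ are covered by Proposition~\ref{exceptional}; the remaining classical socles, namely $\PSL^{\epsilon}_4(q)$ with $3|(q-\epsilon)$, $\PSL^{\epsilon}_n(q)$ with $n\in\{4,5,6,7\}$ and $3|(q+\epsilon)$, $\PSp_{2n}(q)$ and $\operatorname{P\Omega}_{2n+1}(q)$ with $n\in\{2,3\}$, and $\operatorname{P\Omega}_8^-(q)$, are handled in Proposition~\ref{allbutpsl3}. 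Once each family is matched to its proposition, and once one checks that the degenerate small-field isomorphisms are already absorbed by Propositions~\ref{gapstuff} and~\ref{An}, the theorem follows.

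I expect the main obstacle to be twofold. The first is bookkeeping: confirming that the six families of Proposition~\ref{theo:classical}, together with the cyclic and the $\ge 3$-generated cases, genuinely exhaust all simple groups of Lie type in characteristic $\neq 3$ with $3\mid|S|$; this uses that $q\not\equiv 0 \pmod 3$ forces $3\mid(q-\epsilon)$ or $3\mid(q+\epsilon)$, so the linear/unitary dichotomy is complete, and a parallel check for the symplectic and orthogonal ranks. The second and genuinely delicate point is transporting the $\sigma$-fixed character counts from $S$ up to $A$. Here the tools are Lemma~\ref{tilde} and the Alperin--Dade bijection of Theorem~\ref{ald}, applied through the structural input of Proposition~\ref{ad} (which supplies $\tilde{S}A=\tilde{S}C_A(P)$) and Lemma~\ref{index2} (which bounds the relevant index by $2$); the hard part is the index-$2$ situation, in which restriction is no longer a bijection and one must instead count extensions together with fused pairs of characters to verify that $k_{0,\sigma}(B_0(A))$ still lands in $\{6,9\}$.
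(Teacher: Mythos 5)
There is a genuine gap, and it sits exactly where the real work of this theorem lives. Your reduction is fine: since $3\nmid|A/S|$ the Sylow $3$-structure is controlled by $S$, the cyclic socles are dispatched by \cite[Theorem A]{RSV20}, the at-least-$3$-generated socles by Propositions~\ref{theo:deg} and~\ref{exceptional}, and $\PSL_3^{\epsilon}(q)$ with $3\mid(q-\epsilon)$ and the exceptional groups $\operatorname{G}_2(q)$, ${}^3\operatorname{D}_4(q)$, ${}^2\operatorname{F}_4(q^2)'$ by Propositions~\ref{psl3} and~\ref{exceptional}. But for the remaining $2$-generated classical socles ($\PSL_4^{\epsilon}(q)$ with $3\mid(q-\epsilon)$, $\PSL_n^{\epsilon}(q)$ with $n\in\{4,\dots,7\}$ and $3\mid(q+\epsilon)$, $\PSp_4(q)$, $\PSp_6(q)$, $\operatorname{P\Omega}_7(q)$, $\operatorname{P\Omega}_8^-(q)$) you delegate to Proposition~\ref{allbutpsl3}, which does not apply: its hypothesis is that $A/S$ has \emph{nontrivial} Sylow $3$-subgroups, i.e.\ $3\mid|A/S|$, precisely the case excluded here, and its conclusion is the lower bound $k_{0,\sigma}(B_0(A))\ge 10$, the opposite of what you need. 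For these socles you must prove the upper bound $k_{0,\sigma}(B_0(S))\le 9$ (so that Lemma~\ref{lessthan9} forces the count into $\{6,9\}$), and no proposition elsewhere in the paper supplies that.

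That upper bound is the bulk of the paper's actual proof: for each socle one determines $k_0(B_0(S))$ via \cite{FS82}, \cite{FS89}, \cite{mal20} and \cite{MO83}, enumerates the Lusztig series $\mathcal{E}(G,s)$ for semisimple $3$-elements $s$ of the dual group, and uses \cite[Proposition 3.3.15]{GM20} to show that only the series attached to elements of order dividing $3$ can contain $\sigma$-fixed characters --- e.g.\ for $\PSL_4^{\epsilon}(q)$ with $3\mid(q-\epsilon)$ one finds $3^{a+1}$ height-zero characters of which exactly $9$ are $\sigma$-fixed. You correctly flag the transport from $S$ to $A$ (Lemma~\ref{tilde}, Proposition~\ref{ad}, Theorem~\ref{ald}, and the index-$2$ Clifford-theoretic analysis via Lemma~\ref{index2} for $\PSL_4^{\epsilon}(q)$) as delicate, but you omit entirely the Deligne--Lusztig counting on $S$ itself that makes the transported number land in $\{6,9\}$. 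As written, the proposal proves the theorem only for the socles where $2$-generation fails or is handled by an earlier proposition, and leaves the affirmative classical cases unestablished.
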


\begin{proof}
From  Propositions \ref{exceptional} and \ref{theo:deg}  it is sufficient to consider the classical groups listed in the statement of  Proposition \ref{theo:classical}, $\PSL^\epsilon_3(q)$ with $3|(q+\epsilon)$, and $S= \PSL_2(q)$;  however, since the proof of Proposition \ref{theo:gl} gives that in the latter $2$ cases we have cyclic Sylow $3$-subgroups, \cite[Theorem A]{RSV20} gives the desired result in those cases. Therefore, we need only consider the classical groups listed in the statement of  Proposition \ref{theo:classical}.

We begin by compiling some result which will be useful throughout the proof.
The work of \cite{FS82} and \cite{FS89} describe which unipotent characters belong to the principal block as well as the structures of the centralizers of semisimple elements,  \cite[Theorem 2.5]{malle08} describes the $\operatorname{Aut}(S)$-action on the unipotent characters and \cite[Proposition 7.2]{Tay18} gives  a description of the $\Aut(S)$-action on the set of Lusztig Series. Also note \cite[Proposition 3.3.15]{GM20}, which describes the Galois action on the set of Lusztig series.
 
 We proceed by  choosing characters similarly to how they were chosen in \cite[Theorem 5.23]{sf} and argue as to which of these characters are $\sigma$-fixed.
We first consider cases in which $S=\PSL_4^{\epsilon}(q)$ with $3|(q-\epsilon)$. For ease of notation let $G = \SL^{\epsilon}_4(q)$ and $\tilde{G} = \GL_4^{\epsilon}(q)$. From Lemma \ref{tilde} we have that restriction defined a bijection between  $\Irr_{0,\sigma}(B_0(\tilde{S})) $ and $\Irr_{0,\sigma}(B_0(S) )$. Using \cite[Proposition 5.12]{mal20}  and \cite[Proposition 2.13]{MO83}  we obtain $k_0(B_0( S)) = k_0(B_0(\tilde S) )= 3^{a+1} $, where $(q-\epsilon)_3 = 3^a$. Consider a semisimple element $\tilde{s} \in\tilde{G}$ which has eigenvalues of the form $(\zeta,\zeta,\zeta,\zeta^{-3})$, where $\zeta = |3^b|$ with $0 < b \le a$. Any such $\tilde{s}$  will have centralizer of the form $\GL_3^{\epsilon}(q)\times C_{q- \epsilon}$, which has $3$ unipotent characters of distinct $3'$-degrees. Thus, $\mathcal{E}(\tilde{G},\tilde{s})$ will  contain three $3'$-degree characters, and these characters are $\sigma$-fixed if and only if $\tilde{s}$ is by \cite[Proposition 3.3.15]{GM20}.  If $\pi:\tilde{G}\rightarrow \tilde{S}$ is the canonical projection map and $s= \pi(\tilde{s})$, we can argue as in the second to last paragraph of the proof of Proposition \ref{theo:deg} to see that  $\pi(\tilde{s})$ has a connected centralizer, thus each element of $\mathcal{E}(\tilde{G},s)$ has a unique irreducible constituent in $G$, which will again be $\sigma$-fixed if and only if $\tilde{s}$ is.  Furthermore, $G$ has a unique unipotent block, implying $\mathcal{E}(G,s) \subseteq \Irr(B_0(G))$. Using \cite[Lemma 4.4]{NT13} we see that each of these constituents is trivial on $\mathbf{Z}(G)$ and we may take their deflation in $\Irr(B_0(S))$. Thus, for each such $s$ we obtain $3$ characters in $\Irr_{0,\sigma}(B_0(S))$, which are $\sigma$-fixed if and only if $s$ is.   We have there are $3^a-1$ choices for $s$ (note that each choice of $\zeta$ gives a distinct semisimple element mod the center, since any central element with determinant $1$ cannot translate one to another). Thus, we obtain $3(3^a - 1)$ characters in $\Irr_{0}(B_0(S))$. These and the 3 unipotent characters give all $3^{a+1}$ $3'$-degree characters. From the above discussion we have that the only fixed characters are the unipotent characters and the characters in series corresponding to the semisimple elements whose preimages have eigenvalues $(\omega,\omega,\omega,1)$ or $(\omega^{-1},\omega^{-1},\omega^{-1},1)$, giving 9 total. This and Proposition \ref{theo:psl} give the desired result in the case of $S = A$. By Lemma \ref{index2} and its proof we see have that $[\tilde{S}A : \tilde{S}C_A(P)]\le 2 $, and further that, in the case of $[\tilde{S}A : \tilde{S}C_A(P)]= 2 $, the nontrivial coset can be taken to act by inversion on $P_T \in  \Syl_3(T)$, where $T\le S$ is a maximal torus. Using Lemma \ref{tilde} and Theorem \ref{ald} we get that restriction gives a bijection between $\Irr_{0,\sigma}(B_0(S))$ and $\Irr_{0,\sigma}(B_0(\tilde{S}C_A(P))$, which implies that restriction defines a bijection between  $\Irr_{0,\sigma}(B_0(S))$ and $\Irr_{0,\sigma}(B_0(A \cap(\tilde{S} C_A(P)))$. If $\tilde{S}A = \tilde{S}C_A(P)$, then  $A =A\cap \tilde SC_A(P) $  and the result follows from the above discussion. If   $[\tilde{S}A : \tilde{S}C_A(P)]= 2 $, then the diamond isomorphism theorem gives $[A : A\cap \tilde SC_A(P)]= 2 $. Using \cite[Lemma 2.1]{sf} we have that the principal block of $A$ is the unique block above the principal block of $A\cap \tilde SC_A(P)$. Then the $3$ unipotent characters extend to $A$ whereas the semisimple characters corresponding to semisimple elements whose preimages have eigenvalues $(\omega,\omega,\omega,1)$ and  $(\omega^{-1},\omega^{-1},\omega^{-1},1) $
are interchanged by the action of $A$. Thus, Clifford correspondence gives $3$ characters above the $6$ corresponding to $(\omega,\omega,\omega,1)$ and $(\omega^{-1},\omega^{-1},\omega^{-1},1)$. Then Gallagher's Theorem gives $6$ characters above the $3$ unipotent characters. This and \cite[Lemma 3.5]{mmsv} give $k_{0,\sigma}(B_0(A)) = 9$.

For all remaining cases we have that Lemma \ref{tilde}, Proposition \ref{ad} and Theorem \ref{ald} give that restriction defines a bijection between $\Irr_{0,\sigma}(B_0(S))$ and $\Irr_{0,\sigma}(B_0(A
))$. This alongside Proposition \ref{theo:classical} and Lemma \ref{lessthan9} give that it is sufficient to show that $k_{0,\sigma}(B_0(S)) \le 9$ in all remaining cases.

 Consider the case where $3|(q+\epsilon)$ and  $S$ is one of $\PSL_4^\epsilon(q)$ or $ \PSL_5^\epsilon(q)$. In both cases there are $5$ unipotent characters of $3'$-degree in the principal block, thus it is sufficient to argue that there are at most $4$ additional $\sigma$-fixed characters in $\Irr_0(B_0(S))$. First consider the   semisimple  elements $s,t \in \tilde{G}:= \GL^{\epsilon}_n(q)$ that have nontrivial eigenvalues $(\omega,\omega^{-1})$ and $(\omega,\omega,\omega^{-1},\omega^{-1})$ respectively. We see that  have $\mathbf{C}_{\tilde{G}}(s) \cong \Gl^\epsilon_1(q^2)\times\GL^\epsilon_2(q)$ and $\mathbf{C}_{\tilde G}(t) \cong \Gl^\epsilon_2(q^2)$ when $n = 4$ as well as  $\mathbf{C}_{\tilde{G}}(s) \cong \Gl^\epsilon_1(q^2)\times\GL^\epsilon_3(q)$ and $\mathbf{C}_{\tilde G}(t) \cong \Gl^\epsilon_2(q^2)\times\GL^\epsilon_1(q)$ when $n = 5$. In either case we have that both $\mathbf{C}_{\tilde{G}}(s)$ and $\mathbf{C}_{\tilde{G}}(t)$ have two $3'$-degree unipotent characters of distinct degrees. We can then argue as in the previous case to obtain $4$ characters in $\mathcal{E}(\SL_n(q),\pi(s))\cup \mathcal{E}(\SL_n(q),(t))$, where $\pi$ again denotes that canonical projection map from $\tilde{G}$ to $\tilde{S}$.  These and the unipotent characters give $9$ total. Thus, it suffices to show that all other characters in the principal block are not $\sigma$-fixed. 
 Using \cite[Theorem 9.12 and Lemma 17.2]{CE04} we see that it is sufficient to consider deflations of characters in series of the form $\mathcal{E}(\SL^{\epsilon}_n(q),r)$, where $r$ has $3$ power order.  The only possibilities that have not already been considered are those where $|r| \ge 9$; however, \cite[3.3.15]{GM20} gives that elements in these series cannot be $\sigma$-fixed, which gives the desired result.

Now consider the case were $3|(q+\epsilon)$ and  $S$ is one of $\PSL_6^\epsilon(q)$ or $S = \PSL_7^\epsilon(q)$. Let $G = \SL^{\epsilon}_n(q)$, $\tilde{G} = \GL^{\epsilon}_n(q)$, and $3^{a}= (q+\epsilon)_3$ in both cases.  Using Lemma \ref{tilde}
  we have that $k_0(B_0(S)) =  k_0(B_0(\tilde S))$ and the fact the $3\nmid|Z(\tilde{G})|$ gives $k_0(B_0(\tilde{S})) = k_0(B_0(\tilde{G}))$ by \cite[Theorem 9.9(c)]{Nav98}. Using \cite[Proposition 2.13]{MO83} and the preceding discussion we have that in both cases $k_0(B_0(S))  = 3(3^a-1)/2 + 6$. We consider semisimple elements  of $\tilde{G}$ which have non-trivial eigenvalues of the form $(\zeta,\zeta,\zeta,\zeta^{-1},\zeta^{-1},\zeta^{-1})$ with $|\zeta| = 3^b$ with $0 <b  \le a$. There are $(3^a - 1)/2$ such semisimple elements up to conjugacy. We then argue as in the case of $S= \PSL^{\epsilon}_4(q)$ above to obtain $3(3^a-1)/2$ characters in the principal block of $S$, exactly 3 of which (those corresponding to the semisimple element whose eigenvalues are third roots of unity) are $\sigma$-fixed. These and the $6$ unipotent characters of $3'$-degree in the principal block account for all $3'$-degree characters in the principal block and only the unipotent characters and the $3$ discussed above are $\sigma$-fixed, giving $9$ total.

 Now let $S = \PSp_4(q)$. We then have that the principal block is the unique unipotent block of maximal defect and contains $5$ unipotent characters of $3'$ degree. Arguing as in the case of $\PSL^{\epsilon}_5(q)$ above we see that it is sufficient to count characters corresponding to semisimple elements of order 3 in $G^* = \operatorname{SO}_5(q)$. The only possibilities are the semisimple elements with eigenvalues $(\omega,\omega,\omega^{-1}\omega^{-1},1)$ and $(\omega,\omega^{-1},1,1,1)$, both of which have centralizers with 2 unipotent characters of distinct $3'$-degrees. These and the unipotent characters are the only characters that are possibly $\sigma$-fixed, so $k_{0,\sigma}(B_0(S)) \le 9$ as desired.

 Now consider $S = \PSp_6(q)$. By \cite[Theorem 9.9(c)]{Nav98} we have $\Irr_{0}(B_0(S)) = \Irr_{0}(B_0(\operatorname{Sp}_6(q)))$, since $3 \nmid |\mathbf{Z}(\operatorname{Sp}_6(q))|$.  Therefore, using \cite[Theorem 5.17]{mal20},  we obtain $\Irr_0(B_0(S)) = 6 + 3\cdot(3^a-1)/2$, where $3^a = (q^2-1)_3$. Note that, arguing similarly as in the case of $S= \PSL_6(q)$, we find that  any semisimple element of $G^*=\operatorname{SO}_7(q)$ with eigenvalues  $(\zeta,\zeta,\zeta,\zeta^{-1},\zeta^{-1},\zeta^{-1},1)$, where $|\zeta|=3^k$ with $k 
 \ge 1$,  will yield $3$ characters in $\Irr_{0}(B_0(S))$. These and the $6$ unipotent character account for all characters in the principal block. Note that the only of these characters that could be $\sigma$-fixed are the unipotent characters and the 3 characters corresponding to the seimisimple element with eigenvalues $(\omega,\omega,\omega,\omega^{-1},\omega^{-1},\omega^{-1},1)$, giving $k_{0,\sigma}(B_0(S)) \le 9$ as desired. 

 For the case of $\operatorname{P\Omega}_7(q)$ we let $H = \operatorname{SO}_7(q)$.  We have that $S \le H/\mathbf{Z}(H) \le \tilde{S}$. Since $(|\mathbf{Z}(H)|,3) = 1 $ and $S \le H/\mathbf{Z}(H) \le \tilde{S}$ , we have by Lemma \ref{tilde} that $k_{0}(B_0(H)) =k_0(B_0(H/\mathbf{Z}(H)))  = k_0(B_0( S))$. Using \cite[Theorem 5,17]{mal20} as above to obtain $k_{0}(B_0(H))$ the result   follows similarly to the above case, but instead using semisimple elements with eigenvalues of the form $(\zeta,\zeta,\zeta,\zeta^{-1},\zeta^{-1},\zeta^{-1})$ in $\operatorname{Sp}_6(q)$, which is dual to $H$.

 Lastly we consider $S = \operatorname{P\Omega}_8^{-}(q)$. Let $H= \operatorname{SO}_8^-(q)$, $\tilde{H} = \operatorname{GO}_8^-(q)$ and $P \in  \Syl_3(H)$. We then calculate $\tilde{H} = HC_{\tilde{H}}(P)$. Applying Theorem \ref{ald} gives $k_{0}(B_0(H)) = k_{0}(B_0(\tilde{H}))$. We can then use \cite[Theorem 5.17]{mal20} to obtain $ k_{0}(B_0(H)) = 6 + 3(3^{a}-1)/2$. As in the preceding case we have $(|\mathbf{Z}(H)|,3) = 1 $ and $S \le H/\mathbf{Z}(H) \le \tilde{S}$. We also have by Lemma \ref{tilde} that $k_{0}(B_0(H)) =k_0(B_0(H/\mathbf{Z}(H)))  = k_0(B_0( S)) =6 + 3(3^{a}-1)/2$.  We consider each semisimple element with nontrivial eigenvalues of the form $(\zeta,\zeta,\zeta,\zeta^{-1},\zeta^{-1},\zeta^{-1})$ in $H$ and then argue as in the above cases we obtain $k_{0,\sigma}(B_0(S)) \le 9$ and the result.
 \end{proof}

\begin{proposition}\label{theo:psl2}
Let $S = \PSL_2(q)$ or let $3|(q+\epsilon)$ and let $S=\PSL_3^{\epsilon}(q)$. If $S \le A \le \Aut(S)$, then Theorem A holds for $A$. \end{proposition}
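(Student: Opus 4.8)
The plan is to split on whether a Sylow $3$-subgroup $P$ of $A$ is cyclic. First I would record the facts special to these two families. By Proposition \ref{theo:psl} (via the proof of Proposition \ref{theo:gl}) a Sylow $3$-subgroup $Q$ of $S$ is cyclic in both cases, and $3\nmid|\tilde S/S|$: for $\PSL_3^{\epsilon}(q)$ with $3\mid(q+\epsilon)$ one checks $\gcd(3,q-\epsilon)=1$ for both signs of $\epsilon$, and for $\PSL_2(q)$ one has $|\tilde S/S|=\gcd(2,q-1)$. Hence the only contribution to the $3$-part of $\operatorname{Out}(S)$ comes from field automorphisms, so $\operatorname{Out}(S)$ has cyclic Sylow $3$-subgroup and admits a normal $3$-complement. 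In particular, if $3\nmid|A/S|$ then $P=Q$ is cyclic, whereas if $3\mid|A/S|$ then the Sylow $3$-subgroup of $A/S$ is non-trivial and cyclic and, by the first paragraph of Proposition \ref{2or3}, $P$ is $2$-generated.

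If $P$ is cyclic, then $A$ is not $2$-generated, so $|P:\Phi(P)|=3\neq 9$, while \cite[Theorem A]{RSV20} gives $k_{0,\sigma}(B_0(A))=3\notin\{6,9\}$. Both sides of the equivalence in Theorem A fail, so it holds. Now suppose $P$ is non-cyclic, so that $3\mid|A/S|$ and, as noted, $|P:\Phi(P)|=9$; it remains to compute $k_{0,\sigma}(B_0(A))$, and I expect it to equal $9$. Let $M\nor A$ be the preimage of the normal $3$-complement of $A/S$, so $S\le M$, $3\nmid|M/S|$, and $A/M$ is cyclic of $3$-power order. Since $3\nmid|M/S|$, the group $M$ has the same cyclic Sylow $3$-subgroup $Q$ as $S$, so \cite[Theorem A]{RSV20} gives $k_{0,\sigma}(B_0(M))=3$. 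The key point is that $A$ acts on the three-element set $\Irr_{0,\sigma}(B_0(M))$: conjugation preserves degrees and the principal block and commutes with $\sigma$, and $M$ acts trivially, so the cyclic $3$-group $A/M$ acts, whence every orbit has $3$-power size. Since the principal character is a fixed point, the three characters cannot form a single orbit of size $3$, forcing all three to be $A$-invariant.

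To finish I would count. Each $\theta\in\Irr_{0,\sigma}(B_0(M))$ is $A$-invariant and, because $S=S'\le M'$ forces $3\nmid|M/M'|$, has determinantal order prime to $3$; thus \cite[Corollary 6.4]{Nav18} supplies a $\sigma$-fixed extension $\hat\theta\in\Irr(A)$, and Gallagher's theorem gives $\{\hat\theta\lambda:\lambda\in\Irr(A/M)\}$, all lying in $B_0(A)$ as $A/M$ is a $3$-group. Exactly three linear characters $\lambda$ of the cyclic $3$-group $A/M$ are $\sigma$-fixed (those trivial on its Frattini subgroup), so each $\theta$ contributes exactly three members of $\Irr_{0,\sigma}(B_0(A))$. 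Conversely any $\chi\in\Irr_0(B_0(A))$ lies over an $A$-invariant constituent of $\Irr_0(B_0(M))$, since the relative index divides $|A/M|$ yet also divides the $3'$-number $\chi(1)$, and if $\chi$ is $\sigma$-fixed then so is that constituent; hence no further $\sigma$-fixed characters arise. This yields $k_{0,\sigma}(B_0(A))=3\cdot 3=9\in\{6,9\}$, so Theorem A holds.

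The main obstacle is not the orbit idea, which is short, but the block-theoretic bookkeeping that makes the final count exact: one must confirm that the Gallagher constituents, and only those, exhaust $\Irr_{0,\sigma}(B_0(A))$ — that every $3'$-degree character of $B_0(A)$ restricts over $B_0(M)$ with $A$-invariant constituent, that the $\sigma$-fixed extensions from \cite[Corollary 6.4]{Nav18} are compatible with Gallagher multiplication, and that all the produced characters genuinely lie in the principal block. I would also need to verify uniformly across both families (and both signs of $\epsilon$) that $\operatorname{Out}(S)$ has a normal $3$-complement with cyclic $3$-part, so that $M$ and the cyclic quotient $A/M$ exist as required.
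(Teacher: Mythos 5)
Your argument is correct and follows essentially the same route as the paper: pass to the preimage $M$ of the normal $3$-complement of $A/S$, obtain exactly three characters in $\Irr_{0,\sigma}(B_0(M))$ from \cite[Theorem A]{RSV20}, show all three are $A$-invariant, and produce nine $\sigma$-fixed characters of $B_0(A)$ via $\sigma$-fixed extensions and Gallagher's theorem. The only cosmetic differences are that the paper identifies two of the three characters as the trivial and Steinberg characters (which extend rationally to $\Aut(S)$) and applies the orbit argument only to the third, whereas you treat all three uniformly and additionally write out the converse count showing no further $\sigma$-fixed characters arise.
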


\begin{proof}

In both cases $S$ has cyclic Sylow $3$-subgroups. Therefore, if $3\nmid|A/S|$, the result follows from \cite[Theorem A]{RSV20}, so we may assume $3$ divides $|A/S|$. Since $3 \nmid (n,q-\epsilon)$, we can write $A $ in the form $M \rtimes \langle F_{q_0}\rangle$, where $S \le M \le A $ such that $3 \nmid  |M/S|$, and  $|F_{q_0}| = 3^k$ for some $k$. Using Proposition \ref{ad}, Lemma \ref{tilde} and Theorem \ref{ald} we see that restriction gives a bijection between  $\Irr_{0,\sigma}(B_0(M))$ and $\Irr_{0,\sigma}(B_0(S))$. Further, we have exactly $3$ $\sigma$-fixed $3'$ degree characters in the principal block of $M$  by \cite[Theorem A]{RSV20}.
We have that $2$ of the $3$ characters in $\Irr_{0,\sigma}(B_0(S))$ are the trivial character and the Steinberg character, so they extend rationally to $\Aut(S)$. Thus, applying Gallagher's Theorem to the extensions of these characters in $\Irr(B_0(M))$ gives 6 $\sigma$-fixed characters in $B_0(A)$ lying above these characters corresponding to the inflations of the  three $\sigma$-fixed $3'$ degree characters of $A/M$ which we obtain from \cite[Theorem A]{RSV20}. Let $\chi$ be the element of $\Irr_{0,\sigma}(B_0(M))$ which is not the principal character nor the extension of the Steinberg character. Then we claim $\chi$ must be $A$-invariant. This is due to the fact the action of $A$ will permute $\Irr_{0,\sigma}(B_0(M))$ and the principal character is invariant, so the remaining characters must be as well since $\Irr_{0,\sigma}(B_0(M))$ is of size $3$. Thus, since $A/M$ is cyclic, we have that $\chi$ extends to $A$. Since $S = S' \le M'$, we have that $|M/M'|_3=1$. Thus, we have $\mathrm{gcd}(o(\chi)\chi(1),[A:M]) =  1$. Thus, we apply \cite[Corollary 6.2]{Nav18} to obtain an extension of $\hat\chi$, which by \cite[Corollary 6.4]{Nav18} must be $\sigma$-fixed.  Again, using Gallagher's Theorem as above, we then obtain  exactly $3$ $\sigma$-fixed characters lying above $\chi$. Therefore, we obtain $k_{0,\sigma}(B_0(A)) = 9$. This and Proposition \ref{2or3} give the result.
 \end{proof}

\begin{proposition}\label{allbutpsl3}
    Let $S$ be a simple group of Lie type defined in characteristic $p \neq 3$  such that $S\neq \PSL_2(q)$ and $S\neq \PSL^{\epsilon}_3(q)$. Let  $S\le A \le \Aut(S)$ such that $A/S$ has nontrivial Sylow 3-subgroups. Then Theorem A holds for $A$.
\end{proposition}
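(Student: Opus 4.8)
The plan is to read off the generation side from Proposition \ref{alm} and then establish the character side by producing many $\sigma$-fixed height-zero characters in the principal block. First I would invoke Proposition \ref{alm}: a Sylow $3$-subgroup $P$ of $A$ satisfies $|P:\Phi(P)|>9$, so $P$ is non-cyclic and is not $2$-generated, and the generation side of Theorem A fails for $A$. It therefore suffices to show $k_{0,\sigma}(B_0(A))\notin\{6,9\}$. Since $3\mid k_{0,\sigma}(B_0(A))$ by \cite[Lemma 1.4]{RSV20}, and since $P$ is non-cyclic forces $k_{0,\sigma}(B_0(A))\neq 3$ by \cite[Theorem A]{RSV20}, it is enough to prove $k_{0,\sigma}(B_0(A))\geq 10$, which being a multiple of $3$ gives $k_{0,\sigma}(B_0(A))\geq 12$.

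Next I would manufacture four $A$-invariant $\sigma$-fixed characters of $S$. By Lemma \ref{4unipotents}, apart from the listed exceptions $\Irr_{0}(B_0(\tilde S))$ contains at least four rational-valued unipotent characters restricting irreducibly to $S$ and extending to $\Aut(S)$; their restrictions give $\theta_1,\dots,\theta_4\in\Irr_{0,\sigma}(B_0(S))$ that are rational (hence $\sigma$-fixed), $\Aut(S)$-invariant (hence $A$-invariant), and extend to $A$. The exceptions in scope, namely $\PSL_4^{\epsilon}(q)$ with $3\mid(q-\epsilon)$ and $\PSp_4(2^a)$, give only three such unipotent characters, and in both the Sylow $3$-subgroup of $A/S$ is cyclic (the diagonal and graph contributions being prime to $3$). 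In these two cases I would adjoin a fourth character coming from a \emph{self-inverse} (real) semisimple element of order $3$, chosen as in the proof of Proposition \ref{theo:deg}: for instance eigenvalues $(\omega,\omega,\omega^{-1},\omega^{-1})$ for $\PSL_4^{\epsilon}(q)$. Self-inverseness guarantees the associated Lusztig series is fixed by inversion, by the graph automorphism, and by the field automorphisms generating the $3$-part of $A/S$, so the resulting deflated character is $A$-invariant as well as $\sigma$-fixed and of $3'$-degree in $B_0(S)$.

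With four $A$-invariant characters $\theta_i\in\Irr_{0,\sigma}(B_0(S))$ in hand, I would pass to the normal subgroup $M\trianglelefteq A$ with $S\leq M$, $A/M$ a $3$-group and $M/S$ a normal $3$-complement of $A/S$. Each $\theta_i$ extends from $S$ to an $A$-invariant character $\psi_i\in\Irr_{0,\sigma}(B_0(M))$ (using that it extends to $\Aut(S)\supseteq M$, together with Lemma \ref{3prime} and Theorem \ref{ald} to pin down the block). Since $S=S'\leq M'$ forces $3\nmid|M/M'|$, the determinantal order of $\psi_i$ is prime to $3$, so by \cite[Corollary 6.4]{Nav18} it has a $\sigma$-fixed extension to $A$; as $A/M$ is a $3$-group, $B_0(A)$ is the unique block of $A$ covering $B_0(M)$, so these extensions lie in $B_0(A)$. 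Multiplying each by the (at least three) $\sigma$-fixed linear characters of $A/M$ of order dividing $3$ via Gallagher's theorem yields, for each $i$, three distinct $\sigma$-fixed characters of $A$ of $3'$-degree in $B_0(A)$; the resulting $4\cdot 3=12$ characters are pairwise distinct, so $k_{0,\sigma}(B_0(A))\geq 12$.

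The hard part is the structure of $A/S$, since the clean argument above needs $A/S$ to possess a normal $3$-complement. This fails precisely when triality occurs, i.e.\ for $S=\operatorname{P\Omega}_8^{+}(q)$, whose outer automorphism group involves $S_3$. There I would mimic the treatment of $\operatorname{P\Omega}_8^{+}(q)$ in Proposition \ref{defchar}: pass to an index-at-most-$2$ normal subgroup $N$ of $A$ whose image in $A/S$ \emph{does} admit a normal $3$-complement, run the count above inside $N$ to get $k_{0,\sigma}(B_0(N))\geq 18$, note that the $A$-orbits on these characters have size at most $2$ while the principal character is fixed, so they fall into at least $10$ orbits, and apply Lemma \ref{3prime} to lift one $\sigma$-fixed character of $A$ above each orbit, giving $k_{0,\sigma}(B_0(A))\geq 10$ and hence $\geq 12$. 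The remaining points requiring care are the block-membership and $\sigma$-invariance bookkeeping at each extension step, and confirming in the two exceptional families that the adjoined semisimple character is genuinely $A$-invariant and lies in $\Irr_{0,\sigma}(B_0(S))$.
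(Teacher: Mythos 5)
The central structural claim your argument rests on --- that $A/S$ has a normal $3$-complement except when triality occurs for $S=\operatorname{P\Omega}_8^{+}(q)$ --- is false, and this is a genuine gap. That claim is borrowed from the defining-characteristic argument (Proposition \ref{defchar}), where it is valid precisely because $3\nmid|\tilde S/S|$ there. In the present setting the diagonal part $D=(A/S)\cap(\tilde S/S)$ can contain elements of order $3$, and a field or graph automorphism of order prime to $3$ can invert them; then the Hall $3'$-subgroups of $A/S$ are not normal. Concretely, for $S=\PSL_6(4)$ one has $\operatorname{Out}(S)\cong C_3\rtimes(C_2\times C_2)\cong S_3\times C_2$ (both $F_2$ and the graph automorphism invert the diagonal $C_3$), which has no normal Sylow $2$-subgroup; taking $A=\Aut(S)$ puts you squarely in the scope of the proposition with no subgroup $M$ of the kind you need, and an index-$2$ reduction modeled only on the $\operatorname{P\Omega}_8^{+}(q)$ case does not cover this family. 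A secondary soft spot: even when $M$ exists, the assertion that each $\theta_i$ admits an \emph{$A$-invariant, $\sigma$-fixed} extension $\psi_i$ lying in $B_0(M)$ is not delivered by the tools you cite --- Lemma \ref{3prime} produces some $\sigma$-fixed character of $B_0(M)$ over $\theta_i$, not an extension and not an $A$-invariant one, and Theorem \ref{ald} requires $M=S\mathbf{C}_M(P)$, which you have not verified.

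For contrast, the paper's proof never needs a normal $3$-complement of $A/S$. It splits on whether $3$ divides $|A/(\tilde S\cap A)|$; in either case it uses Lemma \ref{3prime} (and, in the second case, a canonical $\sigma$-fixed extension of $\theta_i$ to $\tilde S P$ for $P\in\Syl_3(A/(A\cap\tilde S))$ via \cite[Corollary 6.6(a)]{Nav18}) to exhibit \emph{one} element of $\Irr_{0,\sigma}(B_0(A)\mid(\theta_i)_S)$, and then gets three for free from the divisibility $3\mid k_0(B_0(A)\mid(\theta_i)_S)$ of \cite[Lemma 2.7]{sf} combined with the fact that $\sigma$ has $3$-power order. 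Replacing your Gallagher count over $A/M$ by that mod-$3$ counting device is what removes the dependence on the structure of $A/S$. Your treatment of the exceptional pair $\PSL_4^{\epsilon}(q)$, $3\mid(q-\epsilon)$, and $\PSp_4(2^a)$ (adjoining a character attached to a real semisimple $3$-element) is in the right spirit and close to what the paper does, though the paper secures $A$-invariance of the relevant series via \cite[Proposition 7.2]{Tay18} and only needs a single extra character on top of the nine coming from the three unipotent ones.
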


\begin{proof}
By Proposition \ref{alm} we have that it is sufficient to show that $k_{0,\sigma}(B_0(A))\ge 10$. 
    From  Lemma \ref{4unipotents} we have that $\Irr_{0}(B_0(\tilde{S}))$ contains at least $4$ unipotent characters that extend to $\Aut(S)$ and restrict irreducibly to $\Irr_{0}(B_0(S))$ unless $S = \PSL_4^\epsilon(q) $ with $3|(q - \epsilon)$ or $S = \PSp_4(2^a)$. Let $S$ be not one of those exceptions. First, we consider the case where  $3\nmid |A/(\tilde{S}\cap A)|$. Let $\theta_1,...,\theta_4$ be the $4$ unipotent characters chosen in  Lemma \ref{4unipotents} restricted to $A\cap \tilde{S}$. By \cite[Lemma 4.7]{mmsv}  we have that each $\theta_i$  is $\sigma$-fixed. Then, using Lemma \ref{3prime}, we find some $\hat\theta_i\in \Irr_{0,\sigma}(B_0(A)|\theta_i)$ for each $\theta_i$.  Then \cite[Lemma 2.7]{sf} gives that $3|k_{0}(B_0(A)|(\theta_i)_S)$. This,  the fact that $\sigma$ has $3$-power order, and the fact $\sigma$ permutes the elements of $\Irr_{0,\sigma}(B_0(A)|(\theta_i)_S)$ gives that $3|k_{0,\sigma}(B_0(A)|(\theta_i)_S)$. Therefore, since $\hat\theta_i \in \Irr_{0,\sigma}(B_0(A)|(\theta_i)_S)$, we have that there are at least 3 characters in  $\Irr_{0,\sigma}(B_0(A)|(\theta_i)_S)$.  Since $(\theta_1)_S,(\theta_2)_S,(\theta_3)_S$ and $(\theta_4)_S$ are in distinct $\Aut(S)$-orbits, this gives $k_{0,\sigma}(B_0(A)) \ge 12$.

    Now assume instead that $3$ divides  $|A/(\tilde{S}\cap A)|$. Let $\theta_1,\theta_2,\theta_3$ and $\theta_4$ be the $4$ unipotent characters in $B_0(S)$ chosen in  Lemma \ref{4unipotents} and let $\theta = \theta_i$ for some $i$. Let $P$ be a Sylow $3$-subgroup of $A/(A\cap \tilde{S})$. Since $\theta$ was chosen to be rational and $\Aut(S)$-invariant, by \cite[Corollary 6.6(a)]{Nav18} we can extend $\theta$ to some   $\hat\theta \in \Irr_{0,\sigma}(B_0(\tilde{S} P))$. Then Lemma \ref{3prime} gives a character in $\Irr_{0,\sigma}(B_0(A)|\hat \theta_{\tilde{S}P\cap A})$. This character must lie in $\Irr_{0}(B_0(A)|\theta_S)$. Then, arguing as in the preceding paragraph, we see that  $k_{0,\sigma}(B_0(A)|\theta_S)\ge 3$.  Applying this argument to all $4$ of the $\theta_i$ yields $k_{0,\sigma}(B_0(A))\ge 12$ as they were chosen in distinct $\Aut(S)$-orbits.

Now let $S=\PSL^{\epsilon}_4(q)$ with $3|(q-\epsilon)$ or let $S = \operatorname{PSp_4(2^a)}$. Then by Proposition \ref{4unipotents} we have $\Irr_{0}(B_0(\tilde S))$ contains at least $3$ rational valued unipotent characters that extend to $\Aut(S)$ and restrict irreducibly to $S$. Arguing as in the above cases this gives $k_{0,\sigma}(B_0(A)) \ge 9$, thus it is sufficient to find one additional $\sigma$-fixed character. It is sufficient to find semisimple element $s\in G^*$ such that the semisimple character $\chi_s$ is contained in $\Irr_{0,\sigma}(B_0(S))$. If $S = \PSL_4^{\epsilon}(q) $ with $3|(q-\epsilon)$, we take the semisimple element $s \in G^* = \operatorname{PGL}^{\epsilon}_4(q)$ such that $s$ is the image under the canonical projection map of a semisimple  element of  $\GL^{\epsilon}_4(q)$ with eigenvalues $(\omega,\omega,\omega,1)$, where $\omega$ is a primitive third root of unity. Arguing as in the third paragraph of the proof of Proposition \ref{theo:exceptions} gives $\chi_s \in \Irr_{0,\sigma}(B_0(S))$. Similarly if $S = \PSp_4(2^a)$ we take $s \in G^*$ with eigenvalues $(\omega,\omega,\omega^{-1},\omega^{-1},1)$ and the seventh paragraph of the proof of \ref{theo:exceptions} gives $\chi_s \in \Irr_{0,\sigma}(B_0(S))$.  By Lemma \ref{tilde} we have that in both cases $\chi_s$ extends to some $\hat\chi_s \in  \Irr(B_0(\tilde S))$. Now let $X$ be the preimage in $\tilde{S}A$ of a Sylow $3$-subgroup of $\tilde{S}A/\tilde{S}$. Then by \cite[Proposition 7.2]{Tay18} we see that $\hat{\chi}_s$ is $X$-invariant. In both cases we have $\tilde{S}/\tilde S'$ has $2$-power order. Thus, we have that the determinental order $o(\hat\chi_s)$ must be prime to $3$. Thus, \cite[Corollary 6.4]{Nav18} gives an extension of $\hat\chi_s$  in $X$ which must be $\sigma$-fixed. Then applying Lemma \ref{3prime} gives a character above $\chi_s$ in $\Irr_{0,\sigma}(B_0(A)) $, which must be distinct from those which lie above the unipotent characters because non-unipotent characters cannot be conjugate to unipotent characters. 
\end{proof}

Note that Theorem A follows from  Theorem \ref{allbutpsl3}, Proposition \ref{theo:psl2},
Theorem \ref{theo:exceptions},   Proposition \ref{psl3}, Proposition \ref{defchar},  Proposition \ref{gapstuff}, and Proposition \ref{An}.

\textbf{Acknowledgments}:
The work was completed as part of my Ph.D. study at the University of Denver. I would like to thank and acknowledge my advisor A. A. Schaeffer Fry for her support and guidance. I would also like to  acknowledge  J. Miquel Mart\'inez and Noelia Rizo for the helpful conversations we've had on this topic. I would also like that thank John McHugh and the anonymous referees for helpful comments, which improved the readability of the article.



\end{document}